\newtheorem{theorem}{Theorem}[section]
\newtheorem{corollary}[theorem]{Corollary}
\newtheorem{definition}[theorem]{Definition}
\newtheorem{example}[theorem]{Example}
\newtheorem{lemma}[theorem]{Lemma}
\newtheorem{proposition}[theorem]{Proposition}
\newtheorem{remark}[theorem]{Remark}
\def\polhk#1{\setbox0=\hbox{#1}{\ooalign{\hidewidth
  \lower1.5ex\hbox{`}\hidewidth\crcr\unhbox0}}}
\newcommand{\LL}{\mathrm{LGr}}
\newcommand{\Span}[1]{\mathrm{Span}\left\{#1\right\}}
\def\gh{ \mathfrak h}
\def\gl{ \mathfrak l}
\def\gp{ \mathfrak p}
\def\ker{\mbox{\rm ker \,}}
\def\coker{\mbox{\rm coker \,}}
\newcommand{\vol}{\mathrm{vol}}
\def\R{\mathbb{R}}
\def\p{\mathbb{P}}
\def\O{\mathcal{O}}
\def\E{\mathcal{E}}
\def\LLL{\mathcal{L}}
\def\CC{\mathcal{C}}
\def\GL{\mathrm{GL}}    \def\gl{\mathfrak{gl}}      
\def\gl{\mathfrak{gl}}
\def\sp{\mathfrak{sp}}
\def\Sp{\mathrm{Sp}}
\def\Hom{\mathrm{Hom}\,}
\def\I{\mathrm{I}}
\def\II{\mathrm{II}}
\def\Ker{\mathrm{Ker}}
\newcommand{\Smbl}{\mathrm{Smbl}}
\newcommand{\virg}[1]{``#1''}
\let\epsilon\varepsilon
\let\L\LL
\title{Completely exceptional $2^\textrm{nd}$ order PDEs via  conformal   geometry and BGG resolution}
   \author{Jan Gutt}
  \address{Center for Theoretical Physics, Polish Academy of Sciences, Al. Lotnikow 32/46, 02--668 Warsaw, POLAND} \email{jgutt@cft.edu.pl}
 \author{Gianni Manno} 
   \address{Dipartimento di Matematica ``G. Lagrange'', Politecnico di Torino, Corso Duca degli Abruzzi, 24, 10129 Torino, ITALY.} \email{d027258@polito.it}
 \author{Giovanni Moreno} 
   \address{IMPAN, ul. Sniadeckich 8, 00--656 Warsaw, POLAND.} \email{gmoreno@impan.pl}
\date{\today}
\begin{document}

\begin{abstract}
By studying the development of shock waves out of  discontinuity waves, in 1954 P. Lax discovered a class of PDEs, which he called ``completely exceptional'', where such a transition does not occur after a finite time. A   straightforward integration of the completely exceptionality conditions allowed Boillat  to show  that such PDEs are actually of Monge--Amp\`ere type. In this paper,  we first recast these conditions  in terms of characteristics,  and then we show that the completely exceptional PDEs, with 2 or 3 independent variables, can be  described in terms of the conformal geometry of the Lagrangian Grassmannian, where they are naturally embedded. Moreover, for an arbitrary number of independent variables, we show that the space of $r^\textrm{th}$  degree sections of the Lagrangian Grassmannian can be resolved via a BGG operator. In the particular case of $1^\textrm{st}$ degree sections, i.e.,  hyperplane sections or, equivalently,  Monge--Amp\`ere equations, such operator is a close analog  of the trace--free second fundamental form.
\end{abstract}

%
%
%


 \maketitle

\setcounter{tocdepth}{2}
 \tableofcontents

\section{Introduction}\label{subsecSupersonica}

According to P. Lax (see \cite{MR0481580,Boillat_et_al,MR0068093,RuggeriWave}),   completely exceptionality
is what marks the line between \virg{genuinely nonlinear} systems  and nonlinear systems \virg{displaying a linear behavior}.\par
The essence of this  claim can be   grasped by   thinking about a   well--known physical phenomenon:  the \virg{sonic boom}. An aircraft produces a disturbance in the otherwise steady surrounding medium---the air. In the terminology of \cite{RuggeriWave}, the solution representing the \virg{still air} is referred to as the \emph{unperturbed state} $u_0$, whereas the \emph{perturbed state} $u_1$ describes the fluctuations in the air pressure introduced by the flying object. Since the disturbance travels at finite speed, there is always a portion of the medium which does not feel the fluctuations: this implies the existence of the so--called \emph{wave front}, i.e., the locus where the solution $u_0$ \emph{bifurcates} into the solution $u_1$. In other word, the \emph{global solution}, which describes the status of the medium  traversed by a sub--sonic vehicle, is the \emph{discontinuity wave} obtained by gluing $u_0$ from the left of the wave front, with $u_1$ on its right (see figure below). By construction, a discontinuity wave is a $C^1$ piecewise solution, even if both $u_0$ and $u_1$ are regular (i.e., $C^\infty$).\par
\noindent\epsfig{file=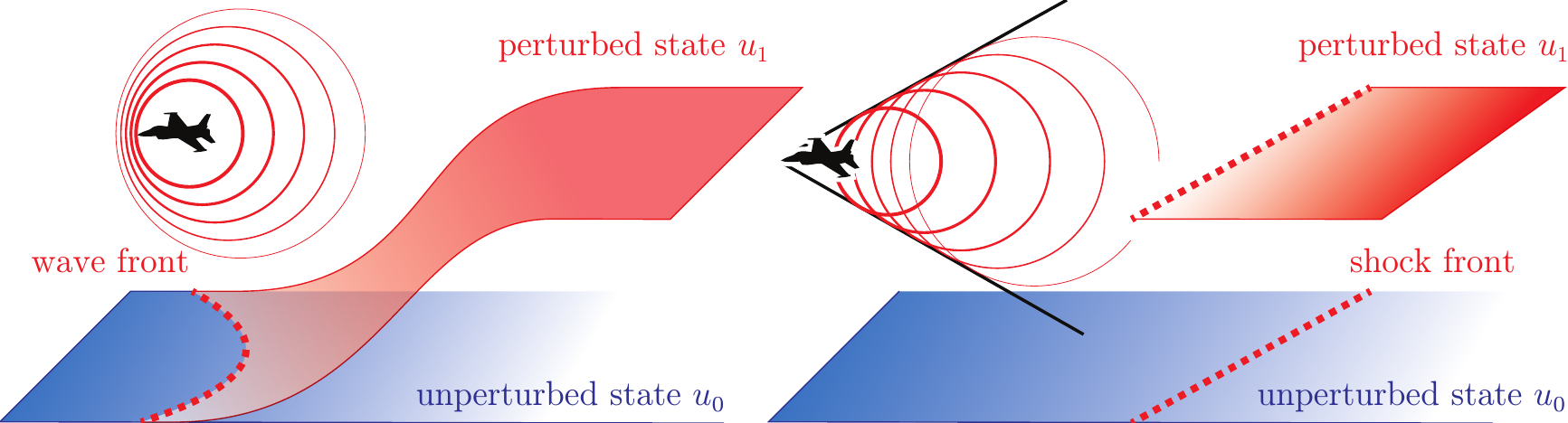,width=\textwidth}\par
As the plane goes super--sonic, a geometric locus appears, where the pressure peaks of the disturbances introduced by the moving body sum up to an explosive value. Immediately beyond this locus, the pressure drops back to its steady--state value. In other words, the states $u_0$ and $u_1$ are now separated, i.e., the global solution obtained, as before, by gluing\footnote{It should be stressed, that not all pairs of regular solutions can be glued to a \emph{weak solution} (see  \cite{RuggeriWave},  Def. 3.1), but some compatibility conditions, known as Rankine--Hugoniot equations, must be checked beforehand (see  \cite{RuggeriWave},  Sec. 3.2.2).} $u_0$ and $u_1$, is not even continuous, as its value suffers a jump along what is now called a \emph{shock front}. Such a solution is known as a \emph{shock wave}, the name \virg{shock} being evocative of the fact that, when the geometric locus intersects the ground, people can actually feel the pressure discontinuity as a loud, sometimes painful, bang.\footnote{Needless to say, a non--continuous solution is just an approximation, since a non--zero jump in the field value can only be caused  by an  \virg{infinite force}, whose existence is usually forbidden by physical laws. In practice, the jump discontinuity in a sonic boom takes place in a very thin (though three--dimensional) domain, whose thickness has been experimentally detected in
$\sim$ 200 nm \cite{9780471742999}.}\par
A key feature of both discontinuity and shock waves is that the wavefront, in the former, as well as the shock front, in the latter, are \emph{characteristic hypersurfaces} (or \emph{lines}, when the number of independent variables is equal to $2$). The speed at which the characteristic lines travel in time is known as the \emph{characteristic speed}---the speed of sound, in our case.

Essentially, a system is called \emph{completely exceptional} if the  phenomenon just described  never occurs, i.e., when
discontinuity waves never turn into shock waves \cite{Ruggeri01061978}. Since the development of shock waves out of discontinuity waves is a typical feature of nonlinear systems of PDEs, it is easy to understand P. Lax's claim that   non--completely exceptional systems are, in a sense, \virg{genuinely nonlinear}.
The notion of complete exceptionality was extensively studied in the case of quasi--linear hyperbolic systems of first order PDEs (see for instance \cite{MR0481580,Boillat_et_al,Ruggeri01061978}). The eigenvalues associated to a given system of this type (that can be interpreted as characteristic velocities) can have jump discontinuities along a characteristic hypersurface, as higher (in this case, first) order derivatives of solutions can suffer a jump through it \cite{Boillat_et_al}: it can be proved that this phenomenon does not occur if and only if the system is completely exceptional \cite{Ruggeri01061978}. These reasonings naturally extend to $2^\textrm{nd}$ order (scalar) PDEs, so that we can give the following definition.
\begin{definition} \label{defCompExcept}
A quasi--linear hyperbolic system of first order PDEs (resp. hyperbolic $2^\textrm{nd}$ order (scalar) PDE) is \emph{completely exceptional} if its characteristic velocities have no jump discontinuities.
\end{definition}
We underline that for characteristic velocities of a $2^\textrm{nd}$ order scalar PDE we mean the solutions of its characteristic polynomial.
On the top of that (as pointed out by G. Boillat, A. Donato and others),
any completely exceptional  $2^\textrm{nd}$ order scalar PDE must be of Monge--Amp\`ere type.
\begin{proposition}[\cite{MR1139843,DonatoRamgulamRogers}]\label{eq.Boillat.MAE}
A hyperbolic scalar $2^{\textrm{nd}}$ order PDE is completely exceptional iff it is a (hyperbolic) Monge--Amp\`ere equation.
\end{proposition}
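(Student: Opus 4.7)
The plan is to recast complete exceptionality as an algebraic condition on the principal symbol of $F$ and then to integrate it. Writing the PDE as $F(x^i, u, u_i, u_{ij}) = 0$ and setting $F^{ij} := \partial F/\partial u_{ij}$ and $F^{ij,kl} := \partial^2 F/\partial u_{ij}\partial u_{kl}$, the Hadamard compatibility conditions force any $C^1$ weak solution whose second derivatives jump across a wavefront of conormal $\xi$ to satisfy $[u_{ij}] = \lambda\,\xi_i\xi_j$, with $\xi$ lying on the characteristic cone $P(\xi) := F^{ij}\xi_i\xi_j = 0$. A standard transport argument along the associated bicharacteristic shows that the amplitude $\lambda$ satisfies a Bernoulli-type ODE $\dot\lambda = A\lambda + B\lambda^2$, whose quadratic coefficient $B$ is (up to a nonzero factor) the restriction to the characteristic cone of the quartic $Q(\xi) := F^{ij,kl}\xi_i\xi_j\xi_k\xi_l$. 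Absence of shock formation for arbitrarily small initial amplitudes is therefore equivalent to the algebraic requirement $Q(\xi) \equiv 0 \pmod{P(\xi)}$.

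Since $Q$ is a quartic vanishing on the zero locus of the hyperbolic quadric $P$, polynomial division yields a quadric $R(\xi) = R^{ij}\xi_i\xi_j$ with $Q = P\cdot R$; comparing symmetric tensor coefficients produces the pointwise identity $F^{(ij,kl)} = F^{(ij}R^{kl)}$, where the parentheses denote total symmetrization over the two index pairs. This is an overdetermined quasi-linear system governing the dependence of $F$ on the symmetric matrix variable $(u_{ij})$. The key claim to integrate is that this system forces $F$ to be an affine combination of the minors of the Hessian,
\[
F(x,u,Du,D^2u) \;=\; \sum_{k=0}^{n} C_k(x,u,Du)\,\Delta_k(D^2u),
\]
where each $\Delta_k$ is a linear combination of $k\times k$ minors of $(u_{ij})$---exactly the Monge--Amp\`ere form. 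The converse implication is a short calculation based on the rank-one update formula $\det(H+t\,\xi\xi^T) = \det H + t\,\xi^T\mathrm{adj}(H)\,\xi$, which shows that the second $(u_{ij})$-derivative of any minor, contracted with $\xi\otimes\xi\otimes\xi\otimes\xi$, vanishes identically, so that $Q \equiv 0 \pmod{P}$ automatically for every Monge--Amp\`ere equation.

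The main obstacle is the integration step, and in particular controlling the ambiguity in the factorization $Q = P\cdot R$ when $P$ is reducible---notably in the two-variable case, where hyperbolicity makes $P$ a product of two distinct real linear factors, one for each family of characteristics, so that $R$ is only determined up to a multiple of $P$. The most conceptual way to absorb this ambiguity, and the one most consistent with the rest of the paper, is to re-interpret the constraint $F^{(ij,kl)} = F^{(ij}R^{kl)}$ as a closure condition on sections of a natural line bundle over the Lagrangian Grassmannian $\LL$, reducing the integration to an invariant-theoretic identification of the space of Monge--Amp\`ere equations inside the space of second-order scalar PDEs.
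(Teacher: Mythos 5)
Your reduction of complete exceptionality to the algebraic condition $Q(\xi)\equiv 0 \pmod{P(\xi)}$ on the symbol is sound and is in fact the same condition the paper arrives at: $Q=\Smbl^2(F)$, $P=\Smbl(F)$, and $Q=P\cdot R$ is precisely the proportionality $\Smbl^2(F)\sim\Smbl(F)$ of Proposition \ref{prop.egregia} and Corollary \ref{corCorollarioDelCacchio}. You reach it via the Riccati/Bernoulli transport equation for the jump amplitude along bicharacteristics, whereas the paper computes the jump of the characteristic velocity $\lambda=\xi_2/\xi_1$ directly through the Hadamard relations and lands on \eqref{eqComPExpectConSegnoSbagliato}; these are equivalent classical derivations. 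Your converse (Monge--Amp\`ere $\Rightarrow$ exceptional) via $\det(H+t\,\xi\xi^T)=\det H+t\,\xi^T\mathrm{adj}(H)\,\xi$ is correct and complete --- indeed each minor of $H+t\,\xi\xi^T$ is affine in $t$, so $Q$ vanishes identically, not merely modulo $P$.

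The genuine gap is the implication you yourself flag as ``the key claim to integrate'': that the overdetermined system $F^{(ij,kl)}=F^{(ij}R^{kl)}$ forces $F$ to be an affine combination of the minors of the Hessian. You do not prove this; you only propose to ``re-interpret the constraint as a closure condition on sections of a natural line bundle over $\LL$,'' which is a plan, not an argument. Moreover, the obstacle you single out --- the ambiguity of $R$ in the factorization $Q=P\cdot R$ --- is not the real difficulty: in the polynomial ring $R=Q/P$ is uniquely determined once it exists (the ring is a domain), even when $P$ splits into two real linear factors. The actual work is the integration of the resulting second-order system for $F$ in the fibre variables $u_{ij}$. The paper closes this in two ways: (i) by explicit integration of the system \eqref{eq.system.Pi0S.2}, which is Boillat's original argument and is simply cited (see Proposition \ref{prop.hyperplane}); (ii) geometrically, by showing (Corollary \ref{corCorollarioso}) that the two rank--one congruences force each fibre $\E_m$ to be a doubly ruled surface in the Lie quadric $\LL(2,4)$, hence a hyperplane section under the Pl\"ucker embedding, hence a Monge--Amp\`ere equation --- which bypasses the integration entirely. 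Your closing paragraph gestures toward exactly this second route, so the fix is to carry it out: identify the rank--one directions annihilating $Q/P$ with the rulings of $\E_m$ and invoke the classification of doubly ruled surfaces in a quadric threefold, rather than attempting a direct integration of the tensorial identity.
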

The first aim of this paper is to provide a geometric   notion   of complete exceptionality, given  in terms of   characteristics, rank--one lines, and symbols.
More precisely,    the characterisation  of completely exceptional PDEs can be reduced to the characterisation of hyperplane sections in the Lagrangian Grassmannian  $\LL(n,2n)$: for $n=2$ we show that the problem reduces to the characterisation of flat surfaces in a three--dimensional conformal space known as the Lie quadric, whereas for $n=3$ one needs to study the analogues of flat five--folds in a six--dimensional space equipped with a trivalent conformal structure.\par

For an arbitrary number $n$ of independent variables, we focus on the function $F$ instead of the equation $\E=\{F=0\}$. More precisely,  we provide a general method to detect whether   a function $F$,  to which we associate the  PDE $\E:=\{F=0\}$,   gives rise to a  completely exceptional equation. Such a method is purely representation--theoretic, being based on the homogeneous structure of $\LL(n,2n)$ and a corresponding BGG sequence.\par Indeed, as a compact homogeneous space  of the Lie group $\Sp(n,\R)$, the manifold $\LL(n,2n)$ is canonically embedded into a suitable projective space $\p^M$ (see Remark \ref{remMinimality} later on).  The reason why one must bring in the BGG operator, is that is represents the most natural way  to resolve the space of functions on $\LL(n,2n)$ which are the restrictions of the $r^\textrm{th}$ degree  homogeneous polynomials on $\p^M$. For $r=1$ one has hyperplane sections, i.e., Monge--Amp\`ere equations. For $r>1$ one encounters new interesting classes of nonlinear PDEs, like, e.g., the ``quadratic   Monge--Amp\`ere equations'', currently studied in  \cite{ArticoloConPawelEKatia}. According to Proposition \ref{eq.Boillat.MAE}, the  complete exceptionality  must be regarded as   a form of ``linearity'', in the sense that such a linearity reveals itself only through the  embedding of the Lagrangian Grassmannian  into $\p^M$.  The last section of this paper points towards the existence of ``higher--order'' analogues of complete exceptionality, i.e., a property characterising    quadratic, cubic, quartic, etc., Monge--Amp\`ere equations.\par

The notions and the results herewith obtained can be generalised to the higher--order case, in the spirit of   \cite{MR1194520,ArticoloAntipatetico}, but this  will be the subject of a future work.

\subsection*{Structure of the paper and main results}
Section \ref{secBackground} merely puts together all the necessary geometric gadgets, and can be safely skipped by anyone feeling confident enough.
In Section \ref{secCompExc} we explain the property of complete exceptionality of a $2^\textrm{nd}$ order PDE: first, we go through  the classical derivation,  and then we recast it in a more intrinsic geometric way.
In this new perspective, Theorem \ref{th.main.1} of Section \ref{secCentrale} proposes a test to check whether a given PDE is completely exceptional (i.e., in view of Proposition \ref{eq.Boillat.MAE}, a Monge--Amp\`ere equation), based on standard considerations of conformal geometry (Proposition \ref{prop.hyperplane}); this test is then generalised to three--valued conformal structures (Section \ref{secL36}). Finally, in Section \ref{sec.Jan}, we show that the functions whose zero loci correspond to a $r^\textrm{th}$ degree  hypersurface of $\L(n,2n)$ constitute    the kernel of suitable natural differential operator arising in a purely representation--theoretic approach to the problem (Theorem \ref{thm:final-bgg}), obtaining the case of Monge--Amp\`ere equations, i.e. of hyperplane sections, in the particular case $r=1$.

\subsection*{Notations and conventions}
For simplicity, when $X$ is a vector field and $\mathcal{P}$ is a distribution on the same manifold, we write ``$X\in\mathcal{P}$'' to mean that $X$ is a smooth (local) section of tangent subbundle $\mathcal{P}$.
We stress that, when dealing with the classical notion of complete exceptionality, we confine ourselves to the two--dimensional context, since the multidimensional case (see \cite{MR571041}) poses no extra conceptual difficulties with respect to the two--dimensional one, the only differences being formal.

\section{Geometric background}\label{secBackground}

\subsection{Prolongation of contact manifolds, Lagrangian Grassmannians and 2--nd order nonlinear PDEs}\label{subLagEq}
The framework for PDEs based on the prolongations of contact manifolds was implicitly made use of in the original  Cartan's works.  One of the earliest modern treatment of the subject was given  by Yamaguchi in 1982 \cite{MR722524}. For the present purposes, we stick to the simplified setting and on the notation used  in \cite{MR2985508}. For   more details on the topics, and extensive references, the reader should consult the classical book \cite{MR2352610}.\par  For the sake of self--consistency, the main ideas ad results   are briefly recalled below.\par

Let $(M,\mathcal{C})$ be a \emph{$(2n+1)$--dimensional contact
manifold}, i.e., a $(2n+1)$-dimensional manifold where $\mathcal{C}$ is a completely non--integrable distribution of codimension $1$. Locally, $\mathcal{C}$ is the kernel of (a contact) $1$--form $\theta$ (defined up to a conformal factor). By Darboux Theorem, there exists a system of coordinates  $(x^i,u,p_i)$, $i= 1,\dots, n$ (which we call \emph{contact coordinates}) such that $\theta = du - p_i dx^i$.
The restriction $\omega := d \theta|_{\mathcal{C}}$
defines on each hyperplane $\mathcal{C}_m$ a conformal symplectic structure: Lagrangian (i.e., maximally $\omega$--isotropic) $n$--dimensional planes of $\mathcal{C}_m$ are tangent to maximal integral submanifolds of $\mathcal{C}$. We denote by $\L(\mathcal{C}_m)$ the \emph{Grassmannian of Lagrangian planes} of $\mathcal{C}_m$  and by
\begin{equation}\label{eq.pi}
\pi: M^{(1)} = \bigcup_{m \in M}\L(\mathcal{C}_m) \to M
\end{equation}
the  bundle of Lagrangian planes.
Previous constructions lead naturally to consider a geometric object which will play an important role in our analysis, namely  the so--called   \emph{tautological bundle} over $M^{(1)}$
\begin{equation}\label{eq.taut}
L\rightarrow  M^{(1)}\, ,
\end{equation}
where   the fibre $L_{m^1}$ is $m^1\in M^{(1)}$ itself, understood as an $n$-dimensional subspace of $\mathcal{C}_{\pi(m^1)}$.
Contact coordinates   on $M$ induce coordinates on $M^{(1)}$: a point $m^1 \equiv L_{m^1} \in M^{(1)}$
has coordinates $(x^i, u, p_i, p_{ij})$, $1 \leq i \leq j \leq n$ iff the
corresponding Lagrangian plane $L_{m^1}$ is given by $m^1\equiv L_{m^1} = \langle D_{x^i} \rangle_{i=1\dots n}$,
where
\begin{equation}\label{eq.tot.der}
D_{x^i}\overset{\textrm{def}}{=} \partial_{x^i} + p_i \partial_u + p_{ij}\partial_{p_j}\, ,
\end{equation}
and  $\|p_{ij}\|$ is a symmetric matrix.

\smallskip\noindent
A \emph{scalar $2^{nd}$ order PDE with $n$ independent variables and one unknown function} is defined as a
hypersurface $\mathcal{E}$ of $M^{(1)}$ and its \emph{solutions} are Lagrangian submanifolds $\Sigma \subset M$ such that $T\Sigma \subset \mathcal{E}$. We assume that $\E$ projects onto $M$: this means that  the restriction   $\pi|_\mathcal{E}$   is a bundle over $M$ whose fibre at $m\in M$ is
\begin{equation}\label{eq.principal}
\mathcal{E}_m:=\mathcal{E}\cap \L(\mathcal{C}_m)\, .
\end{equation}
This eventually explains our perspective:  $\mathcal{E}_m$ is a hypersurface of the Grassmannian  $\L(\mathcal{C}_m)$ of Lagrangian planes of $\mathcal{C}$. In what follows,   different---yet equivalent---symbols will be used for  $\L(\mathcal{C}_m)$: either $\L(n,2n)$, if we wish to detach the notion from the particular symplectic space $\CC_m$, or $X_n$, when we focus in the projective--algebraic aspects of the space, as in the last Section \ref{sec.Jan}.
We conclude by recalling that the  conformal symplectic group $\mathrm{CSp}(\CC_m)$ acts naturally  and transitively on $\L(\mathcal{C}_m)$, which turns out to be a homogeneous manifold. In particular,  the abstract fibre of $M^{(1)}$ can be described as the homogeneous space   $\L(n,2n)=\Sp(2n,\R)/P$, where $P$ is the parabolic subgroup $  \GL_n \ltimes S^2\R^{n*}$.

\subsection{Natural conformal structures on Lagrangian Grassmannians}
It is well known (keeping in mind the definition of tautological bundle \eqref{eq.taut}) that
\begin{equation}\label{eq.iso.solito}
T\L(\CC_m)\simeq S^2L^*\, ,
\end{equation}
so that   the \emph{rank} of a vector in $T\L(\CC_m)$ is defined as the rank of its correspondent symmetric bilinear form through the isomorphism \eqref{eq.iso.solito}. Of course, this definition is invariant under a conformal change of the symplectic form, and proportional tangent vectors have the same rank, so that this notion applies to directions in $T\L(\CC_m)$ as well. Let
$
T^k\L(n,2n):=\{v\in T\L(n,2n) \,\,|\,\, \text{rank}(v)=k\}
$
be the set of vectors of rank $k$.\par
Thus, we have a canonical distribution of cones on $\L(n,2n)$, made from lines of non--maximal rank: there exists (up to a conformal factor) a unique symmetric $n$-form $T_n$ on $\L(n,2n)$ such that the aforementioned cones are its isotropic varieties. The conformal class of $T_n$ can be represented by the total polarisation of the determinant
$
\det\in S^n(S^2L)\, ,
$
which in turn spans the unique one--dimensional irreducible $\GL(n)$--submodule of the   kernel of the ``total symmetrization'' map:
\begin{equation}\label{eq.symmetrization}
\mathcal{S}:S^n(S^2L)\to S^{2n}L\, .
\end{equation}
Having defined a canonical (conformal) $n$--tensor $T_n$ on $\L(n,2n)$,  we can define the following set of symmetric $2$--tensors:
\begin{equation*}
S^{2}_{T_n}:=\left\{X_{n-2}\,\lrcorner\, T_n\,,\quad X_{n-2}\in \bigotimes^{n-2}T\L(n,2n)\right\}\, .
\end{equation*}
Note that, in the case $n=2$, $X_{0}\lrcorner T_2=X_0\cdot T_2$, with $X_0\in C^\infty(\L(n,2n))$, so that $S^{2}_{T_2}$ is the set of bilinear forms on $\L(2,4)$ that are conformal to $T_2$.\par
The image of the contraction map
$S^{n-2} S^2L^\ast \stackrel{T_n}{\longrightarrow} S^2S^2L$, determined by $T_n$,   is precisely $S^{2}_{T_n}$. On the top of that, the total symmetrization gives rise to the short exact sequence
\begin{equation*}
\ldots\longrightarrow S^{n-2} S^2L^\ast \stackrel{T_n}{\longrightarrow} S^2S^2L\stackrel{\mathcal{S}}{\longrightarrow} S^4L \longrightarrow 0\, ,
\end{equation*}
so that
$S^{2}_{T_n}=\ker\mathcal{S}$. Easy computations shows that
$
\dim S^{2}_{T_n} =\frac{n^2(n^2-1)}{12}=1,6,20, 50, \ldots
$,
for $n=2,3,4,5,\dots$. In other words, $(\L(n,2n),T_n)$ is a genuine conformal manifold only for $n=2$.

\subsection{Prolongation of subspaces and characteristics}\label{secProlong}
The   \emph{prolongation}
$U^{(1)}\subset\L(\CC_m)$ of a subspace $U \subset \CC_m$ is defined as follows
\begin{equation*}
U^{(1)}:=\left\{
\begin{array}{c}
L_{m^1}\in \L(\CC_m)\,\,|\,\, L_{m^1}\supseteq U,\,\,\text{if}\,\,\dim(U)\leq n\, ,\\
\\
L_{m^1}\in \L(\CC_m)\,\,|\,\, L_{m^1}\subseteq U,\,\,\text{if}\,\,\dim(U)\geq n\, .
\end{array}
\right.
\end{equation*}
Since $L=L^\perp$, one  can easily check that $U\subset W \Longrightarrow U^{(1)}\supset W^{(1)}$ and that $U^{(1)}=\left({U^\perp}\right)^{(1)}$.

\begin{definition}\label{def.cono.car.point}
The set
$
T^1_{m^1}\E:=
T_{m^1}\E\cap T^1_{m^1}\L(\CC_m)
$ 
of rank--one tangent vectors to $\E$
is called the \emph{rank--one cone} at $m^1$ of the hypersurface $\E$.
\end{definition}

\begin{proposition}\label{prop.char.isotropic}
Vectors of $T^1_{m^1}\E$ are, up to sign, the tensor squares $\eta \otimes \eta$ of a covectors $\eta \in  L_{m^1}^*$ such that $\eta \otimes \eta$, understood as a vertical vector to $M^{(1)}$, is tangent to $\E$.
\end{proposition}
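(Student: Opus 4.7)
The plan is to unwind the definitions and reduce the statement to an elementary piece of linear algebra about rank--one symmetric tensors. First I would use the canonical identification \eqref{eq.iso.solito} of the tangent space to $\L(\CC_m)$ at $m^1$ with $S^2 L_{m^1}^*$. Under this identification, the vertical tangent space of the fibration $\pi:M^{(1)}\to M$ at $m^1$ (i.e.\ the tangent space to the fibre $\L(\CC_m)$) is exactly $S^2 L_{m^1}^*$, and the notion of rank of a vector $v\in T_{m^1}\L(\CC_m)$ is by definition the rank of the symmetric bilinear form it corresponds to on $L_{m^1}$. So $T^1_{m^1}\L(\CC_m)$ is the cone of rank--one symmetric bilinear forms in $S^2 L_{m^1}^*$.

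Second, I would invoke the standard linear algebra fact that a non--zero symmetric bilinear form $\beta\in S^2 L_{m^1}^*$ has rank one if and only if $\beta=\varepsilon\,\eta\otimes\eta$ for some non--zero $\eta\in L_{m^1}^*$ and $\varepsilon\in\{+1,-1\}$. Indeed, if $\mathrm{rank}(\beta)=1$, then the associated linear map $L_{m^1}\to L_{m^1}^*$ has a one--dimensional image spanned by some $\eta$; choosing any $\xi\in L_{m^1}$ with $\beta(\xi,\xi)\neq 0$ and rescaling $\eta$ absorbs the magnitude, leaving the sign $\varepsilon=\mathrm{sgn}\,\beta(\xi,\xi)$ as the only invariant. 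Thus the rank--one elements of $S^2 L_{m^1}^*$ are, up to sign, exactly the tensor squares $\eta\otimes\eta$.

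Finally, by Definition \ref{def.cono.car.point}, $T^1_{m^1}\E=T_{m^1}\E\cap T^1_{m^1}\L(\CC_m)$, so a vector in $T^1_{m^1}\E$ is precisely a rank--one symmetric form on $L_{m^1}$ which, read as a vertical vector to $M^{(1)}\to M$ via the identification above, lies in $T_{m^1}\E$. Combining with the previous paragraph, any such vector is of the form $\pm\eta\otimes\eta$ with $\eta\in L_{m^1}^*$ and the additional constraint that $\eta\otimes\eta$ is tangent to $\E$, which is exactly the claim. There is no genuine obstacle here: the whole statement is essentially a translation of the definition of rank through \eqref{eq.iso.solito} combined with the classification of rank--one symmetric tensors.
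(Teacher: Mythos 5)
Your argument is correct and is exactly the reasoning the paper implicitly relies on: the proposition is stated without proof there, the content being the identification $T^1_{m^1}\L(\CC_m)\subset S^2L_{m^1}^*$ via \eqref{eq.iso.solito} together with the elementary fact that a rank--one real symmetric bilinear form is $\pm\,\eta\otimes\eta$, after which Definition \ref{def.cono.car.point} gives the tangency condition. Your handling of the sign (via $\varepsilon=\mathrm{sgn}\,\beta(\xi,\xi)$ for $\xi$ outside the radical) is the right way to justify the "up to sign" clause over $\R$.
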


\begin{definition}\label{def.char.point}
An isotropic subspace $U$ is called \emph{characteristic} for a covector $\rho\in T^*_{m^1}\L(n,2n)$ if $U\subset L_{m^1}$ and
$\rho|_{T_{m^1}U^{(1)}}=0$. It is called characteristic for a
hypersurface $\{F=0\}$ of $\L(n,2n)$ at a point
$m^1$ of the hypersurface if it is characteristic for $(dF)_{m^1}$. A covector $\eta\in L_{m^1}^*$ is called characteristic for $\rho$ if $\Ker(\eta)$ is characteristic for $\rho$.
\end{definition}

Characteristic directions and characteristic subspaces turn out to be in a tight relationship.
\begin{remark}\label{rem.rank1.and.hyperplane}
Any rank--one vector $v=\pm \eta \otimes \eta
\in T^1_{m^1}\L(n,2n)$ defines the hyperplane $H=\Ker(\eta)$ of $L_{m^1}$ which has the property that $T_{m^1}H^{(1)}=\langle v \rangle$, and
viceversa (note that $H^{(1)}$ is $1$--dimensional). Thus we have the key correspondence between hyperplanes of $L_{m^1}$ (which correspond  to  elements of
$\mathbb{P}L_{m^1}^*$) and  rank--one directions of
$T_{m^1}\L(n,2n)$.
Accordingly,   $(n-1)$--dimensional characteristic subspaces for a hypersurface $\E$  of $\L(n,2n)$ at $m^1$ are in one--to--one correspondence with the characteristic directions for $\E$ at $m^1$.
\end{remark}

We focus now on the   case $n=2$, which will be of key importance in the sequel. Fix a point $m^1\in M^{(1)}$ and let $\ell_m\subset L_{m^1}$ be a direction in $L_{m^1}$, i.e., an element of $\p L_{m^1}$, and
\begin{equation}\label{eqDefEllEmme}
\ell_m= [\lambda:-1]=\Span{ \lambda D_{x^1}- D_{x^2}}=\ker(dx^1+\lambda dx^2)\, ,
\end{equation}
where   $D_{x^i}$ are defined by \eqref{eq.tot.der}.
\begin{proposition}\label{propComeEFattoIlRaggio}
The tangent line to $\ell_m^{(1)}$, where $\ell_m$ is defined by \eqref{eqDefEllEmme}, is given by
$
T_{m^1}\ell_m^{(1)}=$\linebreak $\Span{\partial_{p_{11}}+\lambda \partial_{p_{12}}+\lambda^2 \partial_{p_{22}}}
$. 
\end{proposition}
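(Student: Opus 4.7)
The plan is to compute $\ell_m^{(1)}$ directly in the contact chart, exploiting the fact that a fiber of $\pi:M^{(1)}\to M$ is parametrized affinely by the symmetric matrix $\|p_{ij}\|$. Since $\ell_m$ is a $1$-dimensional subspace of $\CC_m$, its prolongation consists of the Lagrangian planes $L\subset\CC_m$ containing $\ell_m$; an elementary count shows $\dim\ell_m^{(1)}=1$, so it suffices to exhibit a smooth curve in $\L(\CC_m)$ through $m^1$ contained in $\ell_m^{(1)}$ and read off its tangent.

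First I would fix in $\CC_m$ the symplectic basis $e_i:=\partial_{x^i}+p_i\partial_u|_m$, $f_i:=\partial_{p_i}|_m$, $i=1,2$, and write any Lagrangian plane in the affine chart as $L_q=\langle e_1+q_{1j}f_j,\,e_2+q_{2j}f_j\rangle$ with $q_{ij}=q_{ji}$, so that $L_{m^1}=L_p$. Using \eqref{eq.tot.der}, the generator $\lambda D_{x^1}-D_{x^2}$ of $\ell_m$ equals
\begin{equation*}
\lambda e_1 - e_2 + (\lambda p_{1j}-p_{2j})f_j\in\CC_m.
\end{equation*}

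Next I would impose $\ell_m\subset L_q$. Since the unique vector of $L_q$ whose $\{e_1,e_2\}$-component is $\lambda e_1-e_2$ is $\lambda(e_1+q_{1j}f_j)-(e_2+q_{2j}f_j)$, the inclusion is equivalent to the two linear equations $\lambda q_{1j}-q_{2j}=\lambda p_{1j}-p_{2j}$ for $j=1,2$. Using $q_{12}=q_{21}$ and $p_{12}=p_{21}$, one solves successively
\begin{equation*}
q_{11}=p_{11}+t,\qquad q_{12}=p_{12}+\lambda t,\qquad q_{22}=p_{22}+\lambda^2 t,
\end{equation*}
with $t\in\R$ a free parameter. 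This shows that $\ell_m^{(1)}$ is the image of the curve $t\mapsto(x^i,u,p_i,p_{ij}+\lambda^{i+j-2}t)$ (where $\lambda^{i+j-2}$ is understood via the previous formulas), whose velocity at $t=0$ is exactly $\partial_{p_{11}}+\lambda\partial_{p_{12}}+\lambda^2\partial_{p_{22}}$, proving the claim.

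There is no real obstacle here; the only point requiring some care is to distinguish the fixed point $m^1$ (coordinates $p_{ij}$) from the varying Lagrangian plane (coordinates $q_{ij}$), and to remember that the symplectic basis $\{e_i,f_i\}$ of $\CC_m$ does not depend on the fiber coordinates $p_{ij}$, so that the components of $\ell_m$ in this basis involve the \emph{fixed} values $p_{ij}$.
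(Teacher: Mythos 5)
Your proof is correct and follows essentially the same route as the paper's: both impose the linear condition $\ell_m\subset L_q$ in the affine chart of $\L(\CC_m)$ and read off the resulting one-parameter family $(t,\lambda t,\lambda^2 t)$. The only difference is that the paper implicitly normalises the base point to $p_{ij}=0$ (its rank condition has first row $(\lambda,-1,0,0)$), whereas you carry out the computation at a general point --- a harmless refinement that makes the distinction between the fixed $p_{ij}$ and the varying $q_{ij}$ explicit.
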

\begin{proof}
 First describe $\ell^{(1)}$ as the set of $p_{ij}$ such that
 \begin{equation*}
\left(\begin{array}{cccc}\lambda&-1 & 0 & 0 \\1 & 0 & p_{11} & p_{12} \\0 & 1 & p_{12} & p_{22}\end{array}\right)
\end{equation*}
is of rank $\leq 2$. Easy computations shows that this is the case if and only if
\begin{equation}\label{eqParamCurvVert}
(p_{11},p_{12},p_{22})=(t,\lambda t,\lambda^2 t)\, ,\quad t\in\R\, .
\end{equation}
\end{proof}

\subsection{Pl\"ucker embedding}\label{sec.Plucker}
An important feature of the Lagrangian Grassmannian $\LL(n,2n)$ is that it can be naturally embedded into a projective space, by taking the ``volume'' of its elements, viz.
\begin{eqnarray}\label{eq.Plucker}
\LL(n,2n)  &\longrightarrow & \p(\Lambda^n\R^{2n})\, ,\\
L=\Span{l_1,\ldots,l_n} &\longmapsto & \vol (L):=[l_1\wedge\cdots\wedge l_n]\, .\nonumber
\end{eqnarray}
By \emph{hyperplane sections} of $\LL(n,2n)$ we mean the outcomes of the intersection of $\LL(n,2n)$ with a hyperplane of $\p(\Lambda^n\R^{2n})$ via the above embedding, that is called \emph{Pl\"ucker embedding}. It has the remarkable property of converting the \emph{curves} $U^{(1)}$, where $U\subset L\in \LL(n,2n) $ is a hyperplane, into \emph{lines} into its image.
\begin{example}\label{esFinale}
 Let $n=2$, and consider the curve $\ell^{(1)}\subset\LL(2,4)$ given by the Lagrangian planes (see \eqref{eqParamCurvVert})
 \begin{equation*}
L(t)= \left(\begin{array}{cc}t & t\lambda \\t\lambda & t\lambda^2\end{array}\right)\, .
\end{equation*}
 Then $\vol (L(t))=[1:t:t\lambda:t\lambda^2:0]$ is precisely the line joining the points $[1:0:0:0:0]$ and $[0:1:\lambda:\lambda^2:0]$.
\end{example}
\begin{remark}\label{remMinimality}
 Actually, the target space $ \p(\Lambda^n\R^{2n})$ is ``oversized'', for there exists a proper subspace $\p^M\subset \p(\Lambda^n\R^{2n})$, characterised by the property of being the smallest one containing the image of the map \eqref{eq.Plucker}. The existence of such a  $\p^M$  is a basic representation--theoretic fact (see Section \ref{sec.Jan} later on). It is worth observing that the notion of a hyperplane section stays unaltered.
\end{remark}

\subsection{Symbol of a function and its iterations}

Let $F\in C^\infty(M^{(1)})$. Define the \emph{symbol} $\Smbl(F)$ of $F$ as the map
$
m^1\in M^{(1)}\to \Smbl(F)_{m^1}:=\big(d(F|_{\L(\CC_m)})\big)_{m^1} \in  T^*_{m^1}\L(\mathcal{C}_m)= T^*_{m^1}\L(n,2n) \simeq S^2 L_{m^1}
$.
By recalling the definition of tautological bundle \eqref{eq.taut}, in short we can write
$
\Smbl(F)\in S^2L
$.
Essentially, the symbol of a function $F\in C^\infty(M^{(1)})$ is its vertical differential with respect to the projection \eqref{eq.pi}. We call the \emph{characteristic equation} the quadratic polynomial associated to $\Smbl(F)$ equated to zero, i.e.
\begin{equation}\label{eq.characteristic}
\sum_{i\leq j}F_{u_{ij}}\xi_i\xi_j=0\, .
\end{equation}
Notice that
$
(d\,\Smbl(F))_{{m^1}}\,:\,\,T_{{m^1}}\L(n,2n) \simeq S^2L^*_{m^1}   \longrightarrow  T_{\Smbl(F)_{m^1}}   S^2 L_{m^1} \simeq S^2 L_{m^1}
$, 
 i.e., the differential of $\Smbl(F)$ takes values in $S^2 L$,
so that $d\,\Smbl(F) \in S^2 L\otimes S^2 L$. Actually a direct computation shows that
$
d\,\Smbl(F) \in S^2 L\odot S^2L
$. 
In contact coordinates,
\begin{equation*}
d\,\Smbl(F) = \sum_{i\leq j,\, h\leq k}F_{u_{ij}u_{hk}}(D_{x^i}\odot D_{x^j})\otimes (D_{x^h}\odot D_{x^k}) = \sum_{i\leq j,\, h\leq k}\left(2-\delta^{(i,j)}_{(h,k)}\right)F_{u_{ij}u_{hk}}(D_{x^i}\odot D_{x^j})\odot (D_{x^h}\odot D_{x^k})\, ,
\end{equation*}
where $D_{x^i}$ are defined by \eqref{eq.tot.der} and $\delta^{(i,j)}_{(h,k)}$ is equal to $1$ if $(i,j)=(h,k)$ and $0$ otherwise.\par
Now we can  define $$\Smbl^2(F):=\mathcal{S}(d\,\Smbl(F))\in S^4L$$ as the projection of $d\,\Smbl(F)$ on $S^4L$ via 
\eqref{eq.symmetrization}.

\begin{remark}\label{remSimbCoord}
According to the general definition, one can obtain a coordinate description of  $\Smbl^2(F)$, for $F\in C^\infty(M^{(1)})$. First, if
\begin{equation}\label{eqSmbl}
 \Smbl (F)=F_{p_{11}} \xi^2+F_{p_{12}} \xi\eta+F_{p_{22}} \eta^2\, ,
\end{equation}
then
$
 \Smbl^2 (F)=\Smbl(F_{p_{11}}) \xi^2+\Smbl(F_{p_{12}}) \xi\eta+\Smbl(F_{p_{22}}) \eta^2
$ 
 turns out to be a degree--four homogeneous polynomial on $L$.
\end{remark}
Iteratively, we can define the $k^\textrm{th}$ order differential $d^k\,\Smbl(F)\in S^{k+1}(S^2L)$ of the symbol $\Smbl(F)$ of $F$ and obtain the map $\Smbl^k(F)\in S^{2k+2}L$.

\section{Complete exceptionality}\label{secCompExc}

\subsection{The equation of completely exceptionality: classical derivation}

Concerning scalar PDEs with $2$ independent variables, the roles of characteristic velocities are played by the roots $\lambda= \xi_2/\xi_1$ of the characteristic equation \eqref{eq.characteristic}:
\begin{equation}\label{eq.symb.lambda}
F_{p_{11}}+F_{p_{12}}\lambda+F_{p_{22}}\lambda^2=0\, .
\end{equation}
The denomination \virg{characteristic velocity} is somehow misleading, since it is \emph{not} the speed at which the wave front travels through the space of independent variables (which would require an additional metric structure), but rather the rate of change of the \emph{tangent direction} to the characteristic lines within the tangent spaces. Even if this correct interpretation may seems dependent on the choice of a local coordinate system,  we shall prove later that it is in fact a truly intrinsic notion. For the time being, we stick to the classical understanding of the characteristic velocity as the \virg{slope} of the characteristic line, at each point of a given solution (see Fig. below).\par
\noindent\epsfig{file=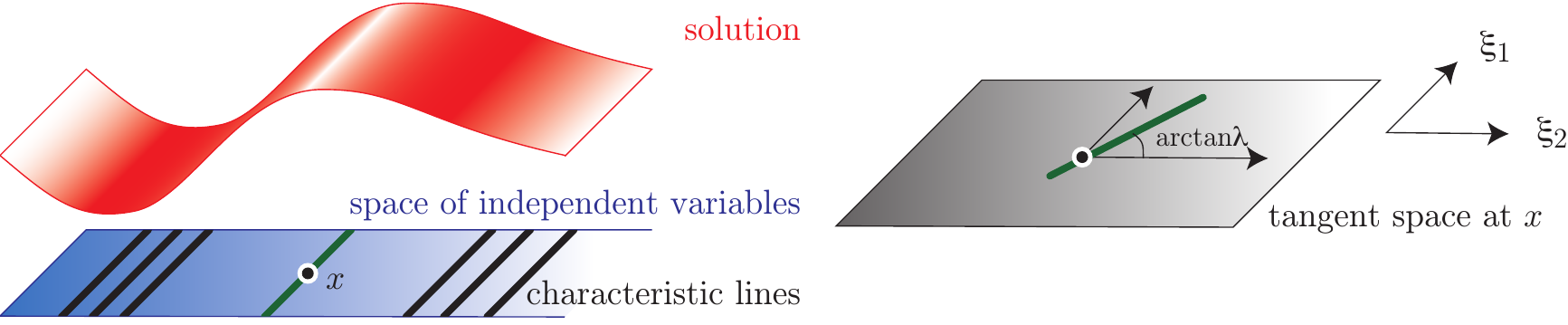,width=\textwidth}
So, we can say that the function $F$ determines the completely exceptional  PDE $\E:=\{F=0\}$    if and only if $\delta(\lambda)=0$ for any solution of the equation \eqref{eq.symb.lambda} restricted to $\E$, where, following \cite{MR1292999,MR1606791,Ruggeri01061978},
\begin{equation*}
\delta:=\left(\frac{\partial}{\partial\phi}\right)_{\phi=0^+} - \left(\frac{\partial}{\partial\phi}\right)_{\phi=0^-}
\end{equation*}
is the operator   measuring the jump of the value of a function $f=f(x^1,x^2)$ through the curve $\{\phi(x^1,x^2)=0\}$.\par
Here some considerations are in order. If we want to compute the jump of a function $f$ through the curve $\{\phi(x^1,x^2)=0\}$ near a point $p$ of such curve, we can compute $\lim_{t\to 0^+}f(\gamma(t)) - \lim_{t\to 0^-}f(\gamma(t))$, with $\gamma(t)$ a curve in the $(x^1,x^2)$--space such that $\gamma(0)=p$ and $\dot{\gamma}(0)$ is transverse to $\{\phi(x^1,x^2)=0\}$.
As recalled in the Introduction,  complete exceptionality is the property which prevents the discontinuity waves from evolving into shock waves. Mathematically, this means that the worst kind of discontinuity which can take place is a  jump one. In turn, this implies   that   the previous value is independent of the chosen curve and, in particular, we can consider an orthogonal curve at $p$.

In view of the above reasoning, to compute $\delta(p_{ij})$ it is enough to take into account the components of $p_{ij}$ along a normal vector of $\phi(x^1,x^2)=0$. For our convenience, we put $p_1=u_{x^1}$, $p_2=u_{x^2}$, $p_{11}=u_{x^1x^1}$, and so on. We need a change of coordinates
\begin{equation}\label{eq.changing}
x^1=x^1(\tau,\mu)\,, \quad x^2=x^2(\tau,\mu)\, ,
\end{equation}
where $\big(x^1(\tau,0),x^2(\tau,0)\big)$ describes the curve $\phi(x^1,x^2)=0$ and $\big(x^1(0,\mu),x^2(0,\mu)\big)$ is a curve orthogonal to $\phi(x^1,x^2)=0$ at $(0,0)$. We can always choose   \eqref{eq.changing} in such a way that
$
(x^1_\tau,x^2_\tau)=(-\tilde{\phi}_{x^2},\tilde{\phi}_{x^1})$, $
(x^1_\mu,x^2_\mu)=(\tilde{\phi}_{x^1},\tilde{\phi}_{x^2})
$,
where
$
(\tilde{\phi}_{x^1},\tilde{\phi}_{x^2})=\frac{\nabla(\phi)}{|\nabla(\phi)|}
$.
A straightforward computation gives
$
\delta(u_{x^ix^j})= \tilde{\phi}_{x^i}\tilde{\phi}_{x^j}\delta(u_{\mu\mu})
$, 
thus getting the Hadamard's relation (see \cite{MR1194520})
$
\delta(u_{x^1x^2})=\left(\frac{\phi_2}{\phi_1}\right)\delta(u_{x^1x^1})$, $ \delta(u_{x^2x^2})=\left(\frac{\phi_2}{\phi_1}\right)^2\delta(u_{x^1x^1})
$.
So, the complete exceptional condition $\delta(\lambda)=0$, for $\lambda$ satisfying \eqref{eq.symb.lambda}, translates as follows:
\begin{equation}\label{eqComPExpectConSegnoSbagliato}
\lambda_{p_{11}}+\lambda_{p_{12}}\lambda+\lambda_{p_{22}}\lambda^2=0\, .
\end{equation}
A direct computation shows that the   condition \eqref{eqComPExpectConSegnoSbagliato}, for a hyperbolic PDE $\{F=p_{22}-h=0\}$, where $h=$\linebreak $h(x^1,x^2,u,p_1,p_2,p_{11},p_{12})$, is described by the following system of PDEs:
\begin{equation}\label{eq.system.Pi0S.2}
\left\{
\begin{array}{l}
h_{p_{11}p_{11}}+h_{p_{11}}h_{p_{12}p_{12}}=0\, ,
\\
\\
2h_{p_{11}p_{12}} + h_{p_{12}}h_{p_{12}p_{12}}=0\, .
\end{array}
\right.
\end{equation}

\begin{proposition}\label{propPrimaEquivalenzaCompEcc}
A hyperbolic PDE $\E=\{F=0\}$ is completely exceptional (Definition \ref{defCompExcept}) if and only if \eqref{eqComPExpectConSegnoSbagliato} is satisfied for all roots $\lambda$ of \eqref{eq.symb.lambda}.
\end{proposition}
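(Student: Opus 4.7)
The plan is to make rigorous the chain-rule computation sketched in the paragraphs preceding the statement. I regard $\lambda$, defined implicitly by \eqref{eq.symb.lambda}, as a smooth function on $\E$ of the coordinates $(x^i, u, p_i, p_{ij})$. For a discontinuity wave whose front is the curve $\{\phi(x^1,x^2)=0\}$, only the second derivatives of $u$ can jump across the front, while $u$ and its first derivatives remain continuous; hence $\delta(x^i)=\delta(u)=\delta(p_i)=0$. Applying the $\delta$--operator to $\lambda$ viewed as a composition, one obtains
\[
\delta(\lambda) \;=\; \lambda_{p_{11}}\,\delta(p_{11}) \;+\; \lambda_{p_{12}}\,\delta(p_{12}) \;+\; \lambda_{p_{22}}\,\delta(p_{22}).
\]

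Next I would invoke the Hadamard relations already established in the text, namely $\delta(p_{12})=(\phi_{x^2}/\phi_{x^1})\,\delta(p_{11})$ and $\delta(p_{22})=(\phi_{x^2}/\phi_{x^1})^2\,\delta(p_{11})$. The decisive point is that the wavefront $\{\phi=0\}$ is itself a characteristic curve for the branch of $\lambda$ carried by the wave: its conormal $(\phi_{x^1},\phi_{x^2})$ is a characteristic covector, so that $(\xi_1,\xi_2)=(\phi_{x^1},\phi_{x^2})$ solves the characteristic equation and $\phi_{x^2}/\phi_{x^1}=\lambda$ for exactly that root. Substituting yields the master identity
\[
\delta(\lambda) \;=\; \bigl(\lambda_{p_{11}} + \lambda_{p_{12}}\lambda + \lambda_{p_{22}}\lambda^2\bigr)\,\delta(p_{11}).
\]

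A genuine discontinuity wave requires $\delta(p_{11})\neq 0$ (otherwise, together with the Hadamard relations, all jumps $\delta(p_{ij})$ would vanish and no discontinuity would actually be present). Under this non-degeneracy, $\delta(\lambda)=0$ is equivalent to \eqref{eqComPExpectConSegnoSbagliato}. Conversely, since for each root of the characteristic equation along a solution one can realise a discontinuity wave with arbitrary non-zero $\delta(p_{11})$, the validity of $\delta(\lambda)=0$ across every such wave is equivalent to \eqref{eqComPExpectConSegnoSbagliato} holding identically on $\E$ for every root $\lambda$ of \eqref{eq.symb.lambda}. Both implications of the proposition follow at once.

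The main technical obstacle I expect is the identification $\phi_{x^2}/\phi_{x^1}=\lambda$: one must make precise the classical fact that the curve along which the solution bifurcates is forced to be characteristic for the selected branch, which is customarily derived from the compatibility (Rankine--Hugoniot-type) conditions for Cauchy data on $\{\phi=0\}$. Everything else reduces to the linear algebra of the chain rule combined with the Hadamard identities already supplied in the text.
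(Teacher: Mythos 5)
Your argument is correct and coincides with the paper's own (implicit) proof: the proposition is stated as a summary of the classical derivation given just before it, which consists exactly of the chain rule applied to $\delta(\lambda)$, the Hadamard relations $\delta(p_{12})=(\phi_{x^2}/\phi_{x^1})\delta(p_{11})$, $\delta(p_{22})=(\phi_{x^2}/\phi_{x^1})^2\delta(p_{11})$, and the identification of $\phi_{x^2}/\phi_{x^1}$ with the root $\lambda$ coming from the fact that the wavefront is characteristic. Your version merely makes explicit the non-degeneracy $\delta(p_{11})\neq 0$ and the characteristic nature of the front, both of which the paper takes for granted from the classical discontinuity-wave setup recalled in the introduction.
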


\begin{remark}\label{rem.par.equ.sempre.ecc}
For parabolic equations (i.e., $2^\mathrm{nd}$ order PDE whose symbol is a perfect square) condition \eqref{eqComPExpectConSegnoSbagliato} is always fulfilled. In fact, the discriminant of the equation \eqref{eq.symb.lambda} is
$ 
\Delta=F_{p_{12}}^2-4F_{p_{11}}F_{p_{22}}
$, 
and when it is zero (in a neighborhood) the only solution to \eqref{eq.symb.lambda} is $\lambda=-\frac{1}{2}\frac{F_{p_{12}}}{F_{p_{22}}}$. If we substitute this $\lambda$ into \eqref{eqComPExpectConSegnoSbagliato} we obtain the equation
$
\Delta_{p_{22}}F_{p_{12}}F_{p_{22}}-\Delta_{p_{12}}F_{p_{22}}^2-\Delta(F_{p_{22}p_{22}}F_{p_{12}}-F_{p_{22}}F_{p_{12}p_{22}})=0
$
that, in this case, is always satisfied as $\Delta$ is zero.
\end{remark}

\begin{remark}
For the sake of completeness, we  also recall   that the completely exceptional PDEs form a sub--class of another important family of PDEs, the so--called Temple class  \cite{MR1875343,Temple}.
\end{remark}

\subsection{Complete exceptionality and Monge--Amp\`ere invariants}
If $2^\textrm{nd}$ order PDEs are understood as hypersurfaces in $M^{(1)}$ (see \ref{subLagEq}), then the  complete exceptional ones form a  $\textrm{Cont}(M)$--invariant sub--class, where $\textrm{Cont}(M)$ is the set of the contactmorphisms of $M$. In particular, fibre--by--fibre, such PDEs must correspond to $\mathrm{CSp}_4$--invariant hypersurfaces in the three--dimensional Lagrangian Grassmannian $\LL(2,4)$, known also as the Lie quadric $Q^3$ \cite{2014arXiv1405.5198J} (recall that we set $n=2$) .  The theory of surfaces in $Q^3$ is thoroughly described,  e.g.,  in D. The's paper \cite{MR2876965}, so that the obvious question arise, how completely exceptional PDEs fit into the classification of surfaces in $Q^3$.\par
The answer, provided in this section, is that (for hyperbolic PDEs), the two equations \eqref{eqComPExpectConSegnoSbagliato} (one for each root of the symbol) correspond precisely to the vanishing of the Monge--Amp\`ere invariants $I_1$ and $I_2$ (see the above cited paper, Fig. 1). Incidentally, this provides a geometric proof to Proposition \ref{eq.Boillat.MAE}.\par
Recalling (see \ref{secProlong}) that to any point $m^1$ of a PDE $\E\subset M^{(1)}$ one can associate two (rank--one) directions in $T_{m^1}\E$ (or, better to say, in its vertical part), to which correspond two directions in $\CC_m$ (recall again that $n=2$). Thus, we have two sections
\begin{equation}\label{eqDefEllConI}
m^1\in\E\longmapsto \ell^i_{m^1}\in\p L_{m^1}\,,\quad i\in\{1,2\}
\end{equation}
of the projective line bundle
\begin{equation}\label{eqProjLinBund}
\p L\longrightarrow \E\, ,
\end{equation}
which is nothing but the projectivised tautological bundle restricted to $\E$.\par

The point $ \ell^i_{m^1}\in\p L_{m^1}$  determines the rank--one curve $(\ell^i_{m^1})^{(1)}\subset \E_m$, whose tangent direction is
$
\tilde{\ell}^i_{m^1}=T_{m^1}(\ell^i_{m^1})^{(1)}
$. 
 If $\ell^i_{m^1}=[\lambda^i(m^1):-1]$, then (see also Proposition \ref{propComeEFattoIlRaggio})
\begin{equation}\label{eqCampoVerticaleScrittaPerLEnnesimaVolta}
\tilde{\ell}^i_{m^1}   =  \left\langle\partial_{p_{11}}+\lambda^i(m^1) \partial_{p_{12}} +(\lambda^i(m^1))^2\partial_{p_{2}}\right\rangle\, .
\end{equation}

\begin{proposition}\label{propSecondaEquivalenzaCompEcc}
The derivative of the direction  \eqref{eqCampoVerticaleScrittaPerLEnnesimaVolta} along any curve   $\gamma$ passing through the point $m^1$ with tangent space $\tilde{\ell}^i_{m^1}$ is zero if and only  if \eqref{eqComPExpectConSegnoSbagliato} is satisfied in $m^1$.
\end{proposition}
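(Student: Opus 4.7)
The plan is to reduce the statement to a direct computation in contact coordinates, exploiting the fact that at each point $m^1 \in \E$ the direction $\tilde{\ell}^i_{m^1}$ is completely parameterised by the single scalar $\lambda^i(m^1)$, namely the root of the characteristic equation \eqref{eq.symb.lambda} attached to $\ell^i_{m^1}$. Hence ``derivative of the direction'' will be made synonymous with ``derivative of $\lambda^i$''.

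First, I would introduce the local vector field
$$X:=\partial_{p_{11}}+\lambda^i\,\partial_{p_{12}}+(\lambda^i)^2\,\partial_{p_{22}},$$
which by Proposition \ref{propComeEFattoIlRaggio} generates $\tilde{\ell}^i$ pointwise. One checks at once that $X$ is tangent to $\E$: evaluating $dF$ on $X$ yields $F_{p_{11}}+\lambda^i F_{p_{12}}+(\lambda^i)^2 F_{p_{22}}$, which vanishes on $\E$ precisely because $\lambda^i$ is a root of \eqref{eq.symb.lambda}. Since the presentation of $\tilde{\ell}^i$ supplied by Proposition \ref{propComeEFattoIlRaggio} depends smoothly and only on the parameter $\lambda^i$, the derivative of the line field $m^1\mapsto\tilde{\ell}^i_{m^1}$ along any curve $\gamma$ with $\gamma'(0)=\tilde{\ell}^i_{m^1}$ is controlled, up to a non-zero factor, by $d\lambda^i(\gamma'(0))$. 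As $\gamma'(0)$ is proportional to $X_{m^1}$, the whole computation reduces to evaluating
$$X(\lambda^i)=\lambda^i_{p_{11}}+\lambda^i\,\lambda^i_{p_{12}}+(\lambda^i)^2\,\lambda^i_{p_{22}},$$
which is exactly the left-hand side of \eqref{eqComPExpectConSegnoSbagliato} applied to the root $\lambda^i$.

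The only slightly delicate point is to pin down rigorously in what sense ``the derivative of the direction \eqref{eqCampoVerticaleScrittaPerLEnnesimaVolta}'' vanishes. I would make this precise by working in a local trivialisation of the projective line bundle of rank-one tangent directions to $\E$ over a fibre of $\pi|_\E$: in such a trivialisation the map $m^1\mapsto\tilde{\ell}^i_{m^1}$ becomes literally the graph of the scalar function $\lambda^i$, so that the differential of this section evaluated on $\gamma'(0)$ vanishes if and only if $d\lambda^i(\gamma'(0))=0$. Equivalently one may say that $\tilde{\ell}^i$ is preserved by its own flow modulo itself exactly when $X(\lambda^i)=0$, which is the intrinsic rephrasing of \eqref{eqComPExpectConSegnoSbagliato}. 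Modulo this identification, the proposition is immediate from the coordinate identity above, and the independence from the particular curve $\gamma$ is automatic, since only $\gamma'(0)$ enters the computation.
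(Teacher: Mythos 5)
Your proof is correct and follows essentially the same route as the paper's: both reduce the vanishing of the derivative of the direction to the vanishing of $d\lambda^i$ on the rank--one vector $\partial_{p_{11}}+\lambda^i\partial_{p_{12}}+(\lambda^i)^2\partial_{p_{22}}$, which is literally the left--hand side of \eqref{eqComPExpectConSegnoSbagliato}. The only cosmetic difference is how the ``derivative of a direction'' is made meaningful: the paper identifies the nearby fibres $L_{m^1(t)}$ with $L_{m^1}$ via the splitting $\mathcal{C}_m=L_{m^1}\oplus V_m$ and differentiates $\ell^i=[\lambda^i:-1]$, whereas you trivialise the bundle of rank--one tangent directions, but in both cases the section becomes the scalar $\lambda^i$ and the same coordinate computation closes the argument.
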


\begin{proof}
 If $m^1(t)$ denotes the point moving along the curve $\gamma$, then $\ell^i_{m^1(t)}$, defined as in \eqref{eqDefEllConI}, is a point in $\p L_{m^1(t)}$.  So, in principle, the curve $\R\ni t\longmapsto \ell^i_{m^1(t)}\in \p L_{m^1(t)}$ takes values in different spaces. Nevertheless, for small values of $t$ each $L_{m^1(t)}$ can be identified with $L_{m^1}$, by means of the projection of $\CC_m=L_{m^1}\oplus V_m$ onto $L_{m^1}$, where $V_m=\Span{\partial_{p_1},\partial_{p_2}}$ is  the vertical subspace.\footnote{Observe that the theorem does not suffer from the choice of $V_m$: indeed, upon a different choice of $V_m$, the vector $(\lambda,-1)$ is replaced by an its linear image, and the vanishing of its derivative give the same condition on $\lambda$.}\par
 Since $\ell^i_{m^1(t)}=[\lambda^i(m^1(t)):-1]$ in the basis of $L_{m^1(t)}$, the projected line into $L_{m^1}$ will have the same components $[\lambda^i(m^1(t)):-1]$, but now in the basis of $L_{m^1}$. In other words, the curve $\ell^i_{m^1(t)}$ can be regarded as the curve $\R\ni t\longmapsto [\lambda^i(m^1(t)):-1]\in \p L_{m^1}$. In turn, the latter is equivalent to the curve $\R\ni t\longmapsto \lambda^i(m^1(t)) \in \R$, whose velocity at zero is precisely the left--hand side of \eqref{eqComPExpectConSegnoSbagliato}.
\end{proof}

Now we are ready to recast Proposition \ref{eq.Boillat.MAE} in a purely geometric perspective.

\begin{corollary}\label{corCorollarioso}
A hyperbolic $2^\textrm{nd}$ order PDE $\E$ is completely exceptional  if and only if it is   doubly ruled.
\end{corollary}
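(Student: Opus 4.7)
The plan is to combine Propositions \ref{propPrimaEquivalenzaCompEcc} and \ref{propSecondaEquivalenzaCompEcc} with the ``straightening'' provided by Example \ref{esFinale}, converting the differential condition \eqref{eqComPExpectConSegnoSbagliato} into the existence of actual projective lines sweeping out $\E$ under the Pl\"ucker embedding.

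First I would recall that, for a hyperbolic equation, the characteristic polynomial \eqref{eq.symb.lambda} has two distinct real roots $\lambda^1(m^1),\lambda^2(m^1)$ depending smoothly on $m^1\in\E$; via \eqref{eqDefEllConI} these yield two sections $\ell^1,\ell^2$ of the projective line bundle \eqref{eqProjLinBund} and, correspondingly, two rank--one directions $\tilde\ell^i_{m^1}\subset T_{m^1}\E$ written down in \eqref{eqCampoVerticaleScrittaPerLEnnesimaVolta}. By Proposition \ref{propPrimaEquivalenzaCompEcc}, complete exceptionality is equivalent to \eqref{eqComPExpectConSegnoSbagliato} holding on all of $\E$ for both $\lambda=\lambda^i$, and by Proposition \ref{propSecondaEquivalenzaCompEcc} this amounts to $\tilde\ell^i(\lambda^i)\equiv 0$ on $\E$, i.e.\ the slope $\lambda^i$ is infinitesimally constant in its own characteristic direction.

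For the direct implication I would then observe that this constancy forces each integral curve of the field $m^1\mapsto\tilde\ell^i_{m^1}$ to coincide with the fixed rank--one curve $(\ell^i_{m^1})^{(1)}$, which is therefore entirely contained in $\E$. By Example \ref{esFinale} the Pl\"ucker embedding maps each such curve to a genuine projective line of $\L(2,4)\hookrightarrow\p^M$, so $\E$ is swept out by two transverse families of projective lines, i.e.\ it is doubly ruled. Conversely, if $\E$ is doubly ruled, then pulling back under Pl\"ucker one obtains two families of curves on $\E$, each of which (by the converse side of Example \ref{esFinale}, together with Remark \ref{rem.rank1.and.hyperplane}) is a prolongation $(\ell_{m^1})^{(1)}$ of some hyperplane $\ell_{m^1}\subset L_{m^1}$; because $\E$ is hyperbolic, these two families must match the two characteristic sections $\ell^1,\ell^2$. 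Along each ruling the slope $\lambda^i$ is constant by construction, so Proposition \ref{propSecondaEquivalenzaCompEcc} delivers \eqref{eqComPExpectConSegnoSbagliato} pointwise, and Proposition \ref{propPrimaEquivalenzaCompEcc} then yields complete exceptionality.

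The main obstacle, I expect, is the bookkeeping in the converse: one needs the standard fact that every projective line lying on the Lie quadric $Q^3\simeq\L(2,4)$ arises as the prolongation $\ell^{(1)}$ of some hyperplane $\ell\subset L_{m^1}$ (equivalently, every line on $Q^3$ is tangent to a rank--one direction at each of its points), and that for a hyperbolic $\E$ the two families of such lines that rule $\E$ are precisely the ones carved out by the two real roots of the characteristic polynomial rather than some other pair of rulings. Once this identification is in place, the remaining arguments reduce to a direct application of the two preceding propositions.
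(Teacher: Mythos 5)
Your proposal is correct and follows essentially the same route as the paper's own proof: both directions are obtained by combining Propositions \ref{propPrimaEquivalenzaCompEcc} and \ref{propSecondaEquivalenzaCompEcc} with the observation (Example \ref{esFinale}) that the prolongations $(\ell^i_{m^1})^{(1)}$ become projective lines under the Pl\"ucker embedding. The ``obstacle'' you flag in the converse --- that the two rulings of a doubly ruled $\E$ must be prolongations of hyperplanes and must coincide with the two characteristic families --- is precisely the step the paper asserts without further comment, so your treatment is if anything slightly more careful than the original.
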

\begin{proof}
First of all, observe that  a hyperbolic $2^\textrm{nd}$ order PDE $\E$ is equipped with two distinct rank--one distributions, to which there are associated two distinct sections $\ell^1$ and $\ell^2$ of \eqref{eqProjLinBund}.\par

If $\E$ is completely exceptional, then the curves $(\ell^i_{m^1})^{(1)}$ are precisely the integral curves of the aforementioned rank--one distributions. Since, via Pl\"ucker embedding (see \eqref{eq.Plucker}), these $(\ell^i_{m^1})^{(1)}$ are lines, the surface $\E$ turns out to be doubly ruled (see also Example \ref{esFinale}).\par
Conversely, if $\E$ is doubly ruled, then $\E$ is described by two transversal families of curves $(\ell^i_{m^1})^{(1)}$, $i\in\{1,2\}$. Let us fix $i$ and consider, at each point $m^1\in\E$, the curve $\gamma^i:=(\ell^i_{m^1})^{(1)}$, whose coordinate representation is \eqref{eqParamCurvVert}. Since the section $m^1\to\ell^i_{m^1}$ is constant along $\gamma^i$, its derivative is zero, and hence, recalling Proposition \ref{propPrimaEquivalenzaCompEcc}, in view of Proposition \ref{propSecondaEquivalenzaCompEcc}, $\E$ is completely exceptional.
\end{proof}

\subsection{Geometric formulation of complete exceptionality: the \virg{symbolic way}}
Proposition \ref{propSecondaEquivalenzaCompEcc}
 gives a geometric meaning to the local formula \eqref{eqComPExpectConSegnoSbagliato}, i.e.,  to complete exceptional (hyperbolic) PDEs: it 
claims that  the roots of the polynomial \eqref{eq.symb.lambda} have zero acceleration along the rank--one directions determined (see Section \ref{secProlong}) by the roots themselves.\par  
 Nevertheless, it cannot be denied that hyperplane sections, i.e., Monge--Amp\`ere equations, exist also in the realm of non--hyperbolic equations. Consequently, there must be a natural way to extend the test of complete exceptionality to non--hyperbolic equations: the purpose of this section is to show that such an extension does exist, and it allows to characterise Monge--Amp\`ere equations in general.\par
%
The bottom line is simple: the ``zero acceleration condition'' for the roots of the characteristic polynomial \eqref{eq.symb.lambda}, used in Proposition \ref{propSecondaEquivalenzaCompEcc},  can be actually formulated without  solving the polynomial itself.
\begin{proposition}\label{prop.egregia}Let
$\mathcal{E}\subset M^{(1)}$  be a hyperbolic scalar second order PDE. Then $\E$ is completely exceptional iff $\mathcal{E}=\{F=0\}$ for some $F\in C^\infty(M^{(1)})$ such that $\Smbl^2(F)$ is proportional to $\Smbl(F)$.
\end{proposition}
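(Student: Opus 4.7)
The plan is to reduce the statement, via Proposition \ref{propPrimaEquivalenzaCompEcc}, to a single symbol-level calculation. Since $\Smbl(F)\in S^2L$ is a quadratic polynomial in $(\xi,\eta)$ and $\Smbl^2(F)\in S^4L$ a quartic, ``$\Smbl^2(F)$ proportional to $\Smbl(F)$'' is naturally read as the divisibility relation $\Smbl^2(F)=Q\cdot\Smbl(F)$ for some $Q\in S^2L$. In the hyperbolic regime, $\Smbl(F)$ has two distinct real linear factors corresponding to the roots $\lambda_1,\lambda_2$ of \eqref{eq.symb.lambda}, so this divisibility is equivalent to the pair of scalar conditions $\Smbl^2(F)(1,\lambda_i)=0$, $i=1,2$.

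The key computation is as follows. From Remark \ref{remSimbCoord} one expands
\begin{equation*}
\Smbl^2(F)(\xi,\eta)=F_{p_{11}p_{11}}\xi^4+2F_{p_{11}p_{12}}\xi^3\eta+(2F_{p_{11}p_{22}}+F_{p_{12}p_{12}})\xi^2\eta^2+2F_{p_{12}p_{22}}\xi\eta^3+F_{p_{22}p_{22}}\eta^4,
\end{equation*}
while implicit differentiation of the defining identity $F_{p_{11}}+F_{p_{12}}\lambda+F_{p_{22}}\lambda^2=0$ with respect to each $p_{ij}$ gives
\begin{equation*}
\lambda_{p_{ij}}=-\frac{F_{p_{11}p_{ij}}+F_{p_{12}p_{ij}}\lambda+F_{p_{22}p_{ij}}\lambda^2}{F_{p_{12}}+2F_{p_{22}}\lambda},
\end{equation*}
whose denominator is nonzero exactly because hyperbolicity forces $\lambda$ to be a simple root. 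Forming the linear combination $\lambda_{p_{11}}+\lambda_{p_{12}}\lambda+\lambda_{p_{22}}\lambda^2$ and regrouping the resulting nine second-derivative terms by powers of $\lambda$ reproduces precisely the coefficients of $\xi^{4-k}\eta^k$ listed above, yielding
\begin{equation*}
\lambda_{p_{11}}+\lambda_{p_{12}}\lambda+\lambda_{p_{22}}\lambda^2=-\frac{\Smbl^2(F)(1,\lambda)}{F_{p_{12}}+2F_{p_{22}}\lambda}.
\end{equation*}

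Combined with Proposition \ref{propPrimaEquivalenzaCompEcc}, this identity closes both directions of the ``iff'': complete exceptionality of $\E$ is the vanishing on $\E$ of the left-hand side at each root, equivalently $\Smbl^2(F)(1,\lambda_i)=0$, equivalently $\Smbl(F)\mid\Smbl^2(F)$ on $\E$. For the existence clause, a direct check using the hyperplane-section description of Section \ref{sec.Plucker} shows that the natural defining function $F$ of a Monge--Amp\`ere equation satisfies $\Smbl^2(F)\equiv 0$ identically, which is trivially proportional to $\Smbl(F)$; more generally, any rescaling $F\mapsto fF$ changes $\Smbl^2(F)|_\E$ only modulo $\Smbl(F)|_\E$ by the Leibniz rule, so the divisibility is in fact intrinsic to $\E$ and independent of the choice of defining function. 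The only real obstacle is bookkeeping: one must track carefully the combinatorial $2$'s in the quartic expansion of $\Smbl^2(F)$ -- originating in the symmetrisation $d\,\Smbl(F)\in S^2L\odot S^2L$ -- and align them with the nine implicitly-differentiated terms in the candidate identity; beyond that, the argument is a routine elementary computation.
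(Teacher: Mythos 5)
Your proof is correct, and at the level of strategy it coincides with the paper's: both reduce the claim to Proposition \ref{propPrimaEquivalenzaCompEcc} and read the proportionality $\Smbl^2(F)\sim\Smbl(F)$ as divisibility of a quartic by a quadratic in the fibre variable. The execution differs. The paper normalises the symbol by its roots, writing $[\Smbl(F)]=[\mu^2-(\lambda^1+\lambda^2)\mu+\lambda^1\lambda^2]$, computes $\Smbl^2$ of this representative in terms of the $\lambda^i_{p_{jk}}$ (formula \eqref{eqSimbSimb}), and checks when the four linear equations for the proportionality factor $K+H\mu$ are compatible, recovering the two copies of \eqref{eqComPExpectConSegnoSbagliato}. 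You instead keep $F$ general and derive, by implicit differentiation of \eqref{eq.symb.lambda}, the closed identity
$\lambda_{p_{11}}+\lambda_{p_{12}}\lambda+\lambda_{p_{22}}\lambda^2=-\Smbl^2(F)(1,\lambda)\big/\left(F_{p_{12}}+2F_{p_{22}}\lambda\right)$,
then invoke the elementary fact that a quartic is divisible by a quadratic with distinct roots iff it vanishes at both roots. Your identity checks out: the numerator $\sum_{i\le j}\sum_{h\le k}F_{p_{ij}p_{hk}}m_{ij}(\lambda)m_{hk}(\lambda)$ obtained from the implicit differentiation matches exactly the coefficients of $\Smbl^2(F)$ as given in Remark \ref{remSimbCoord}. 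Your organisation buys two things the paper leaves implicit: it avoids passing to a normalised representative of the conformal class (whose $\Smbl^2$ agrees with $[\Smbl^2(F)]$ only at the roots), and your closing Leibniz--rule remark explains why the condition depends only on $\E$ and not on the chosen defining function, which is precisely what the ``for some $F$'' clause requires. Two minor caveats, the first shared with the paper: you work in the chart where both characteristic roots are finite (i.e.\ $F_{p_{22}}\neq 0$); and the supplementary appeal to the Monge--Amp\`ere normal form for the ``existence clause'' is unnecessary (and mildly circular relative to Proposition \ref{eq.Boillat.MAE}), since your identity already closes both directions.
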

\begin{proof}
Let $\ell^i=[\lambda^i:-1]$ be the roots of $\Smbl(F)$. This means that
\begin{equation}
[\Smbl(F)]=[ \mu^2-(\lambda^1+\lambda^2)\mu+\lambda^1\lambda^2]\, .
\end{equation}
Then, by  Remark \ref{remSimbCoord}, one has
\begin{align}
[ \Smbl ^2(F)]&=[\lambda^2\lambda^1_{p_{11}} +\lambda^1\lambda^2_{p_{11}}+(\lambda^2\lambda^1_{p_{12}}+\lambda^1\lambda^2_{p_{12}}-\lambda^1_{p_{11}}-\lambda^2_{p_{11}})\mu+\label{eqSimbSimb}\\
&+ (\lambda^2\lambda^1_{p_{22}}+\lambda^1\lambda^2_{p_{22}}-\lambda^1_{p_{12}}-\lambda^2_{p_{12}})\mu^2-(\lambda^1_{p_{22}}+\lambda^2_{p_{22}})\mu^3]\nonumber
\end{align}
Now, $\Smbl^2(F)$ is proportional to $\Smbl(F)$ if and only if an element $K+H\mu\in S^2L$ exists, such that
\begin{equation}\label{eqMaledetta}
[ \Smbl^2 (F)]=[ \Smbl (F) \cdot (K+H\mu)]\, .
\end{equation}
Since \eqref{eqMaledetta} is a polynomial identity of degree 3 in $\mu$, it gives 4 linear equations in $H$, $K$. Straightforward computations show that this system is compatible if and only if  the two equations, obtained by replacing $\lambda$ with $\lambda^i$ in \eqref{eqComPExpectConSegnoSbagliato}, are satisfied.
\end{proof}

\begin{corollary}\label{corCorollarioDelCacchio}
The equation $\E$ is A Monge--Amp\`ere equation if and only if, for some $F$ such that $\E=\{F=0\}$, $\Smbl^2(F)$ is proportional to $\Smbl(F)$. 
\end{corollary}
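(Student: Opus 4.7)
The plan is to combine Propositions \ref{prop.egregia} and \ref{eq.Boillat.MAE} to dispose of the hyperbolic case, and then to extend the equivalence to general (elliptic or parabolic) Monge--Amp\`ere equations by a direct algebraic argument on the vertical $2$-jet of $F$.

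For the forward direction, I would pick a defining function of $\E$ in the standard Monge--Amp\`ere form $F = A p_{11} + 2B p_{12} + C p_{22} + D(p_{11}p_{22} - p_{12}^2) + E$, with $A, B, C, D, E$ depending only on $(x^i, u, p_i)$. The only non-vanishing vertical second derivatives are $F_{p_{11}p_{22}} = D$ and $F_{p_{12}p_{12}} = -2D$, so the coordinate expression of $\Smbl^2(F)$ implied by Remark \ref{remSimbCoord} reduces to the single term $(2D - 2D)\,\xi^2\eta^2 = 0$. Hence $\Smbl^2(F) = 0 \cdot \Smbl(F)$ is trivially proportional to $\Smbl(F)$.

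For the backward direction in the hyperbolic case, I would chain Proposition \ref{prop.egregia} (proportionality is equivalent to complete exceptionality) with Proposition \ref{eq.Boillat.MAE} (complete exceptionality is equivalent to Monge--Amp\`ere). For a general PDE, matching coefficients in the polynomial identity $\Smbl^2(F) = Q \cdot \Smbl(F)$, with $Q = q_0\xi^2 + q_1\xi\eta + q_2\eta^2 \in S^2 L$, produces a system of algebraic equations on the vertical second derivatives of $F$ whose form does not depend on the signature of $\Smbl(F)$. To conclude that $\E$ is a hyperplane section of $\LL(2,4)$, I would seek a non-vanishing rescaling $\sigma$ such that $\widetilde F = \sigma F$ satisfies $\Smbl^2(\widetilde F) \equiv 0$, and then integrate the ensuing constraints $\widetilde F_{p_{11}p_{11}} = \widetilde F_{p_{11}p_{12}} = \widetilde F_{p_{12}p_{22}} = \widetilde F_{p_{22}p_{22}} = 0$ together with $2\widetilde F_{p_{11}p_{22}} + \widetilde F_{p_{12}p_{12}} = 0$ to recover the Monge--Amp\`ere form for $\widetilde F$, exactly running the forward calculation in reverse.

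The main obstacle is producing the rescaling $\sigma$: under the identification $T^*\LL(2,4) \simeq S^2 L$, the vertical first-order equation $\Smbl(\sigma) = -\tfrac{\sigma Q}{2}$ is solvable only when $Q$ is closed as a vertical $1$-form on the fibres of $\pi$. Verifying this closedness directly from the proportionality hypothesis is the key technical step. Alternatively, one can sidestep it by observing that the implication \emph{``$\Smbl^2(F) \propto \Smbl(F)$ implies $\E$ is Monge--Amp\`ere''} is algebraic in the vertical $2$-jet of $F$; since it has already been established on the Zariski-dense hyperbolic open subset of $M^{(1)}$ via Propositions \ref{prop.egregia} and \ref{eq.Boillat.MAE}, it must extend to all of $M^{(1)}$ by analytic continuation.
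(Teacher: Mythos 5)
Your treatment of the hyperbolic case is exactly the paper's: chain Proposition \ref{prop.egregia} with Proposition \ref{eq.Boillat.MAE}. Your explicit check of the forward direction is also correct and in fact more explicit than anything the paper writes down: for the standard Monge--Amp\`ere defining function the only surviving contribution to $\Smbl^2(F)$ is the $\xi^2\eta^2$ coefficient $F_{p_{11}p_{22}}+F_{p_{12}p_{12}}+F_{p_{22}p_{11}}=D-2D+D=0$.

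The gap is in the backward direction for non--hyperbolic $\E$, and neither of your two routes closes it. The rescaling route hinges on solving $\Smbl(\sigma)=-\tfrac{\sigma Q}{2}$, an overdetermined first--order system along the fibres of $\pi$ whose integrability (closedness of $Q$ as a fibrewise $1$--form) you explicitly leave unverified; without it you cannot reduce proportionality to $\Smbl^2(\widetilde F)\equiv 0$. The ``analytic continuation from the Zariski--dense hyperbolic locus'' route does not parse: hyperbolicity is a condition on the $1$--jet of the \emph{given} $F$, so for a genuinely elliptic equation the hyperbolic locus of that $F$ in $M^{(1)}$ is empty and there is nothing to continue from. Nor can you argue pointwise in the space of $2$--jets, because the conclusion ``$\E_m$ is a hyperplane section'' is not an algebraic condition on the $2$--jet of $F$ at a single point: it is obtained by \emph{integrating} the second--order system (the signature--independent analogue of \eqref{eq.system.Pi0S.2}) along the whole fibre $\L(\CC_m)$. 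That integration, carried out without the hyperbolicity assumption, is exactly what the paper delegates to Boillat \cite{MR1139843}; a self--contained proof must either perform it directly (possible, e.g., by complexifying the roots of the characteristic polynomial, but that requires an argument) or verify the integrability condition for your $\sigma$.
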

\begin{proof}
If $\E$ is hyperbolic, the result  immediately follows from Propositions \ref{eq.Boillat.MAE} and \ref{prop.egregia}. For a general $\E$, one must rely on the original direct integration method\footnote{This method   amounts at integrating \eqref{eq.system.Pi0S.2} without the hyperbolicity   condition.} of Boillat  \cite{MR1139843}.
\end{proof}
Now we can recast \eqref{eqComPExpectConSegnoSbagliato} in an intrinsic geometric way, not requiring the actual existence of the roots of the characteristic polynomial.
\begin{corollary}
Let us consider a local trivilization of line bundle \eqref{eqProjLinBund}
so that (w.r.t. this trivialization) each $\ell^i$ (see \eqref{eqDefEllConI}) can be seen as a function on $\E$.
Then hypersurface $\E$ is completely exceptional iff
\begin{equation}\label{condEgregiaEQ}
\Smbl(\ell^i)|_{T(\ell^i)^{(1)}}\equiv 0\, ,\quad i=1,2\,.
\end{equation}
\end{corollary}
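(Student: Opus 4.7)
The plan is to unpack \eqref{condEgregiaEQ} in contact coordinates and match it term by term with \eqref{eqComPExpectConSegnoSbagliato}, after which the claim is immediate from Proposition \ref{propPrimaEquivalenzaCompEcc}.

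First, I would interpret the two ingredients of \eqref{condEgregiaEQ}. Thanks to the chosen trivialisation, the section $m^1 \mapsto \ell^i_{m^1} = [\lambda^i(m^1) : -1]$ of the projective line bundle \eqref{eqProjLinBund} becomes a genuine function $\lambda^i \in C^\infty(\E)$; its symbol $\Smbl(\ell^i)$ is therefore the vertical differential of $\lambda^i$ along the fibres of $\pi|_\E$. Simultaneously, Proposition \ref{propComeEFattoIlRaggio} identifies $T(\ell^i)^{(1)}$ as the vertical line spanned by $\tilde{\ell}^i = \partial_{p_{11}} + \lambda^i \partial_{p_{12}} + (\lambda^i)^2 \partial_{p_{22}}$, so restriction to $T(\ell^i)^{(1)}$ amounts to pairing $\Smbl(\lambda^i)$ with $\tilde{\ell}^i$.

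The explicit calculation would then yield
\[
\Smbl(\ell^i)\bigl(\tilde{\ell}^i\bigr) = \lambda^i_{p_{11}} + \lambda^i\,\lambda^i_{p_{12}} + (\lambda^i)^2\,\lambda^i_{p_{22}},
\]
which is precisely the left-hand side of \eqref{eqComPExpectConSegnoSbagliato} evaluated at $\lambda = \lambda^i$. Hence the two conditions \eqref{condEgregiaEQ} for $i=1,2$ express exactly that both roots of the characteristic polynomial \eqref{eq.symb.lambda} satisfy \eqref{eqComPExpectConSegnoSbagliato}, and Proposition \ref{propPrimaEquivalenzaCompEcc} closes the equivalence with complete exceptionality.

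The one delicate point I anticipate is the proper interpretation of $\Smbl$ when applied to a function on $\E$ instead of on the full $M^{(1)}$. I would handle this by noting that $\tilde{\ell}^i$ is itself vertical for $\pi|_{\E}$ and tangent to $\E$, since it generates the curve $(\ell^i_{m^1})^{(1)} \subset \E_m$; differentiating $\lambda^i|_{\E_m}$ along $\tilde{\ell}^i$ therefore coincides intrinsically with the coordinate directional derivative displayed above, with no implicit correction arising from the defining equation $F=0$. Once this identification is in place, the corollary becomes essentially a trivialisation-free reformulation of Proposition \ref{propSecondaEquivalenzaCompEcc}.
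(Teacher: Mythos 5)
Your proposal is correct and follows essentially the same route as the paper: identify $T(\ell^i)^{(1)}$ via Proposition \ref{propComeEFattoIlRaggio}, compute $\Smbl(\lambda^i)$ as in \eqref{eqSmbl}, pair the two to recover the left-hand side of \eqref{eqComPExpectConSegnoSbagliato}, and conclude by Proposition \ref{propPrimaEquivalenzaCompEcc}. Your extra remark that $\tilde{\ell}^i$ is tangent to $\E$, so that the restriction of the symbol is independent of how $\lambda^i$ is extended off $\E$, is a point the paper leaves implicit and is a welcome clarification.
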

\begin{proof}
Let $\ell=[\lambda:-1]$ be either $\ell^1$ or $\ell^2$, and just recall (see Proposition \ref{propComeEFattoIlRaggio}) that $T(\ell)^{(1)}$ is spanned by $\partial_{p_{11}}+\lambda\partial_{p_{12}}+\lambda^2\partial_{p_{22}}$.
Since locally $\ell\equiv \lambda$, one can compute the symbol of the function $\lambda$, as in \eqref{eqSmbl}, viz.
\begin{equation}\label{eqSecGradRadicConst2}
 \Smbl (\lambda)=\lambda_{p_{11}} \xi^2+\lambda_{p_{12}} \xi\eta+\lambda_{p_{22}} \eta^2\,  ,
 \end{equation}
 and pair it with $T(\ell)^{(1)}$. Hence,
   condition \eqref{condEgregiaEQ} coincides with \eqref{eqComPExpectConSegnoSbagliato}.
\end{proof}

\section{Conformal Geometry of 2D and 3D completely exceptional $2^\textrm{nd}$ order PDEs}\label{secCentrale}

Let $\Sigma$ be a hypersurface of a semi-Riemannian manifold $(M,g)$. We denote by $\mathrm{I}^g_\Sigma$ the first fundamental form of the hypersurface $\Sigma$ w.r.t. the metric $g$. Note that the tensor $\mathrm{I}^g_\Sigma$ can be degenerate; following the definitions of \cite{9780125267403}, we say that $\Sigma$ is a \emph{semi-Riemannian hypersurface} of $M$ if $\mathrm{I}^g_\Sigma$ is not degenerate. We
define the second fundamental form $\II^g_\eta$ of the hypersurface $\Sigma$ (w.r.t. a normal vector $\eta\in T^\perp_p\Sigma$) by
\begin{equation}\label{eq.second}
\II^g_\eta (v,w) = g (\nabla_vN,w )=-g(\nabla_XY,N)(p)\,, \quad v,w\in T_p\Sigma
\end{equation}
where $X$, $Y$ and $N$ are, respectively, local extension to $M$ of $v$, $w$ and $\eta$. Some consideration of definition \eqref{eq.second} is in order. Note that if $\Sigma$ is a  semi-Riemannian hypersurface, then $T_pM=T_p\Sigma\oplus T_p^\perp \Sigma$, $p\in \Sigma$. Here we use definition \eqref{eq.second} also when $\Sigma$ is not a semi-Riemannian hypersurface, i.e. when $T^\perp_p\Sigma\subset T_p\Sigma$. Since in our reasonings the orthogonal vector $\eta$ does not play an essential role, we will often use the symbol $\II^g_\Sigma$ to denote the second fundamental form of $\Sigma$.

\medskip
Let us denote by $\mathrm{sym}_{\mathrm{max}}(T_n)$ the set of infinitesimal conformal symmetries of the tensor $T_n$ of maximal rank. If $n=2$, then we put $\mathrm{sym}_{\mathrm{max}}(T_2)=C^\infty(\L(2,4))$. Note that if $X$ is a vector fiend on $\L(3,6)$ of maximal rank then $X\lrcorner T_3$ is a non-degenerate symmetric $2$-tensor. We recall that for hyperplane sections of $\L(n,2n)$ we mean hyperplane sections of $\L(n,2n)$ via the Pl\"ucker embedding \eqref{eq.Plucker}.
\begin{theorem}\label{th.main.1}
Let $\Sigma$ be a hypersurface of $\L(n,2n)$, $2\leq n\leq 3$ and $X_{n-2}\in \mathrm{sym}_{\mathrm{max}}(T_n)$. Then
the condition
\begin{equation}\label{cond.1}
\II_\Sigma^{X_{n-2}\lrcorner T_n}\in (S^2_{T_n})|_\Sigma\,,
\end{equation}
is independent of $X_{n-2}$. Furthermore, $\Sigma$ satisfies condition \eqref{cond.1} iff it is a hyperplane section.
\end{theorem}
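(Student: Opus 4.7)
The strategy is to link condition \eqref{cond.1} to the symbol proportionality test $\Smbl^2(F)\propto\Smbl(F)$ of Corollary \ref{corCorollarioDelCacchio}, which already characterises hyperplane sections through any defining function $F$. This reduces both assertions of the theorem to an algebraic statement on symbols, while the geometric content (independence on $X_{n-2}$, umbilicity-type interpretation) is handled separately via conformal-geometric manipulations.

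For the independence on $X_{n-2}$, the case $n=2$ is immediate: since $S^2_{T_2}=\R\cdot T_2$, condition \eqref{cond.1} reads $\II^{X_0 T_2}_\Sigma=f\cdot\I^{X_0 T_2}_\Sigma$, i.e., $\Sigma$ is totally umbilical for the conformal structure $[T_2]$ on the Lie quadric $\L(2,4)\simeq Q^3$; this is a classical conformally invariant property, while different choices of $X_0\in C^\infty(\L(2,4))$ only rescale the metric within $[T_2]$. For $n=3$, I would exploit that $X_1\lrcorner T_3-X_1'\lrcorner T_3=(X_1-X_1')\lrcorner T_3\in S^2_{T_3}$ and verify, via the standard formula for the variation of the Levi--Civita connection under $g\mapsto g+Y$, that shifting the ambient metric by an element of $S^2_{T_3}$ alters $\II_\Sigma$ by a term in $S^2_{T_3}|_\Sigma$, so that membership in $S^2_{T_3}|_\Sigma$ is preserved.

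For the characterisation of hyperplane sections, I would use the canonical identification $T\L(n,2n)\simeq S^2L^*$ together with the short exact sequence defining $S^2_{T_n}=\ker\mathcal{S}$ to represent $\II^{X_{n-2}\lrcorner T_n}_\Sigma\bmod S^2_{T_n}|_\Sigma$ as an $S^4L$-valued invariant constructed linearly from $\Smbl^2(F)$, $\Smbl(F)$ and $X_{n-2}$. Condition \eqref{cond.1} then becomes the vanishing of this invariant, which after unpacking should amount precisely to $\Smbl^2(F)\propto\Smbl(F)$; Corollary \ref{corCorollarioDelCacchio} then delivers hyperplane section in both directions. For $n=2$ this matches the classical fact that totally umbilical surfaces in the M\"obius sphere $Q^3$ are its $2$-dimensional quadric hyperplane sections, so both the independence step and the converse collapse to well-established statements.

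The main obstacle is the case $n=3$, where the ``tri-valent'' conformal structure $T_3$ on $\L(3,6)$ lacks any classical umbilical-hypersurface theory to lean on. The explicit identification of $\II\bmod S^2_{T_3}|_\Sigma$ with an $S^4L$-valued tensor built from the second symbol $\Smbl^2(F)$ must be carried out either by a direct, somewhat tedious calculation in the canonical coordinates $(p_{ij})$ of \ref{subLagEq}, or --- more structurally --- via the representation-theoretic framework of Section \ref{sec.Jan}, where \eqref{cond.1} can be recognised as the first operator in a BGG sequence over $\Sp(6,\R)/P$ whose kernel consists precisely of hyperplane-section defining functions.
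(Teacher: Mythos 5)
Your proposal follows essentially the same route as the paper: for $n=2$ the paper (Proposition \ref{prop.hyperplane}) computes in the coordinates $p_{ij}$ that condition \eqref{cond.1} is equivalent to the complete--exceptionality system \eqref{eq.system.Pi0S.2}, whose general solution is a Monge--Amp\`ere equation by Boillat's integration, and for $n=3$ (Section \ref{secL36}) it uses a Maple--assisted computation to establish both the independence of the choice of maximal--rank symmetry and the equivalence of \eqref{cond.1} with $\Smbl^2(F)\propto\Smbl(F)$, then invokes Corollary \ref{corCorollarioDelCacchio}. The two computational steps you defer --- identifying $\II_\Sigma$ modulo $S^2_{T_n}|_\Sigma$ with the second symbol, and the $n=3$ independence --- are precisely the steps the paper also settles by explicit coordinate or computer calculation rather than by a structural argument, so your outline coincides with theirs.
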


\begin{corollary}\label{cor.main.1}
Let $\Sigma$ be a semi-Riemannian hypersurface of $(\L(2,4),T_2)$.
The following statements are equivalent:
\begin{enumerate}
\item $\Sigma$ is a hyperplane section;
\item $\II_\Sigma^{T_2}$ is proportional to $\I_\Sigma^{T_2}$;
\item the trace-free part of second fundamental form of $\Sigma$ is zero.
\end{enumerate}
\end{corollary}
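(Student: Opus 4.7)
The plan is to deduce the corollary directly from Theorem \ref{th.main.1}, since in the $n=2$ case the abstract condition \eqref{cond.1} collapses to the expected notion of umbilicity.

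First I would specialize Theorem \ref{th.main.1} to $n=2$. Here $X_{n-2}=X_0$ is a (nowhere-zero) function and, as stipulated just before the theorem, $\mathrm{sym}_{\mathrm{max}}(T_2)=C^\infty(\L(2,4))$. Moreover $X_0\lrcorner T_2=X_0\cdot T_2$, so rescaling by $X_0$ multiplies both $\I_\Sigma^{T_2}$ and $\II_\Sigma^{T_2}$ by the same positive factor; this already makes the independence of \eqref{cond.1} on $X_0$ transparent in this case. Next, from the dimension count $\dim S^2_{T_n}=n^2(n^2-1)/12$ recorded in Section \ref{secBackground}, one has $\dim S^2_{T_2}=1$, so $S^2_{T_2}$ is the $C^\infty(\L(2,4))$-line spanned by $T_2$ itself. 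Consequently $(S^2_{T_n})|_\Sigma$ is exactly the set of symmetric $2$-tensors on $\Sigma$ pointwise proportional to $\I_\Sigma^{T_2}$, and condition \eqref{cond.1} becomes literally the statement that $\II_\Sigma^{T_2}$ is proportional to $\I_\Sigma^{T_2}$. Applying Theorem \ref{th.main.1} yields the equivalence (1) $\Leftrightarrow$ (2).

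For (2) $\Leftrightarrow$ (3) I would invoke the standard trace decomposition. The semi-Riemannian hypothesis means precisely that $\I_\Sigma^{T_2}$ is non-degenerate, so $\I_\Sigma^{T_2}$-traces are well defined and any symmetric $2$-tensor on $\Sigma$ splits uniquely as
\begin{equation*}
\II_\Sigma^{T_2} \;=\; \bigl(\II_\Sigma^{T_2}\bigr)_0 \;+\; \frac{\tr_{\I_\Sigma^{T_2}}\!\bigl(\II_\Sigma^{T_2}\bigr)}{\dim \Sigma}\,\I_\Sigma^{T_2},
\end{equation*}
with $(\II_\Sigma^{T_2})_0$ the trace-free part. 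Proportionality of $\II_\Sigma^{T_2}$ and $\I_\Sigma^{T_2}$ is then manifestly equivalent to the vanishing of $(\II_\Sigma^{T_2})_0$, which is (3).

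The only subtle point, and hence the ``main obstacle'' such as it is, is verifying that the notion of ``proportional'' appearing in Theorem \ref{th.main.1}'s conclusion, in statement (2), and implicit in (3) all coincide, namely pointwise proportionality by a smooth function rather than by a global constant; this is automatic from the $C^\infty(\L(2,4))$-module structure of $S^2_{T_2}$ observed above. Apart from this compatibility check, the corollary is a pure specialization of the theorem combined with the elementary algebraic fact that a symmetric bilinear form is proportional to a non-degenerate reference form iff its trace-free part vanishes.
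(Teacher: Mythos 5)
Your argument is correct, but it is organised quite differently from the paper's. The paper does not deduce Corollary \ref{cor.main.1} from Theorem \ref{th.main.1}: both are established simultaneously by Proposition \ref{prop.hyperplane}, whose proof is an explicit coordinate computation. There one writes $\Sigma=\{p_{22}-h(p_{11},p_{12})=0\}$, unwinds the proportionality $\II^{T_2}_\Sigma\propto\I^{T_2}_\Sigma$ into the two--equation system \eqref{eq.system.Pi0S.2} (whose solutions are exactly the Monge--Amp\`ere equations, i.e.\ hyperplane sections, by Boillat's integration), unwinds $\II^{T_2\,0}_\Sigma=0$ into the three--equation system \eqref{eq.system.Pi0S}, and shows the two systems are equivalent precisely when $h_{p_{11}}h_{p_{12}}(4h_{p_{11}}+h_{p_{12}}^2)\neq 0$, with the borderline cases $h_{p_{11}}=0$ or $h_{p_{12}}=0$ handled separately and the factor $4h_{p_{11}}+h_{p_{12}}^2\neq 0$ being exactly the semi--Riemannian hypothesis. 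You instead take Theorem \ref{th.main.1} as given, note that $S^2_{T_2}$ is the $C^\infty$--line spanned by $T_2$ so that condition \eqref{cond.1} literally reads $\II^{T_2}_\Sigma\propto\I^{T_2}_\Sigma$, and settle $(2)\Leftrightarrow(3)$ by the trace decomposition. Your route is cleaner and makes the role of non--degeneracy conceptually transparent, and it is the natural reading of the word ``corollary''; but it buys its brevity by outsourcing all the substance to the theorem, whose $n=2$ proof in the paper is the very computation just described. As a freestanding argument it also proves slightly less than Proposition \ref{prop.hyperplane}, whose first assertion ($(1)\Leftrightarrow(2)$) is established even for degenerate $\I^{T_2}_\Sigma$.

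One small inaccuracy: under the rescaling $T_2\mapsto X_0\, T_2$ the second fundamental form is \emph{not} merely multiplied by the same factor as the first; by \eqref{eq.second.conf} it acquires an additional summand proportional to the metric. Your conclusion --- that condition \eqref{cond.1} is insensitive to the choice of $X_0$ --- survives because that extra summand lies in $(S^2_{T_2})|_\Sigma$, but the justification as written is not correct; in any case this independence is already asserted in Theorem \ref{th.main.1}, so you need not rederive it.
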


In the next sections \ref{sec.hyper.sec.2} and \ref{secL36} we prove Theorem \ref{th.main.1} and Corollary \ref{cor.main.1} in the case $n=2$ and $n=3$, respectively.

\subsection{Hyperplane sections of $\L(2,4)$}\label{sec.hyper.sec.2}

Since on $\L(2,4)$ there exists a canonical conformal semi-Riemannian metric $T_2$, before proving Theorem \ref{th.main.1} and Corollary \ref{cor.main.1} we will see how the main object we use transforms under a conformal changing of $T_2$.

\smallskip
Let $(M,g)$ be a (semi-)Riemannian manifold.
Let $\tilde{g}=e^{2\lambda}g$, where $\lambda\in C^\infty(M)$. Let $\nabla^{\tilde g}$ be the Levi-Civita connection of the metric $\tilde{g}$. Then
\begin{equation}\label{eq.conn.up.to.conf}
\nabla^{\tilde g}_XY=\nabla^g_XY + \beta(X,Y)
\end{equation}
where $\beta:TM\times TM\to TM$ is a tensor defined by
\begin{equation}\label{eq.beta}
\beta(X,Y) = X(\lambda)Y + Y(\lambda)X - g(X,Y)g^{-1}(d\lambda)
\end{equation}
Let us see how connection $\nabla^{\tilde g}$ acts on $k$-forms. Let $\omega\in \Lambda^kM$. Then
\begin{equation}\label{eq.dual.conn.conf.1}
\nabla^{\tilde g}_X(\omega)(Y_1,\dots,Y_k)={\nabla}^g_X(\omega)(Y_1,\dots,Y_k)-\omega(\beta(X,Y_1),Y_2,\dots,Y_k) -\dots-\omega(Y_1,\dots,\beta(X,Y_k))
\end{equation}
By using formula \eqref{eq.dual.conn.conf.1}, it is easy to realize that
\begin{equation}
H^{\tilde g}(f) = \nabla^{\tilde g}(df) = \nabla^g(df) - df\circ\beta = H^g(f) - 2 d\lambda\odot df + g^{-1}(d\lambda,df)g
\end{equation}
In view of formula \eqref{eq.second},  \eqref{eq.conn.up.to.conf} and \eqref{eq.beta} we have that
\begin{equation}\label{eq.second.conf}
{\II}^{\tilde g}_\eta=e^{2\lambda}(\II^g_\eta + \eta(\lambda)g)
\end{equation}
where $\lambda$ is understood restricted on $\Sigma$.
\begin{remark}
In the case we choose a \emph{unit} vector field $N$, the right transformation formula for ${\II}^{\tilde g}_\eta$, is
\begin{equation}\label{eq.second.conf.wong}
{\II}^{\tilde g}_\eta  = e^{\lambda}\II^g_\eta + \eta(e^{\lambda})g
\end{equation}
\end{remark}
Let $\Sigma$ be a hypersurface of a semi-Riemannian manfold. Let $\eta\in T^\perp \Sigma$. Let us consider the trace-free part $\II^{g\,0}_\eta$ of the second fundamental form of $\Sigma$:
\begin{equation}\label{eq.trace.free.second}
\II^{g\,0}_\eta=\II^g_\eta-\frac{1}{n-1}H_\eta\cdot g
\end{equation}
where $H_\eta=g^{ij}\mathrm{II_\eta}_{ij}$
is the mean curvature vector.
\begin{remark}
$\II^{g\,0}_\eta$ is conformally invariant, whereas the $(1,1)$-tensor $g^{ik}{\II^{g\,0}_\eta}_{kj}$ is invariant up to conformal transformations. Note that the zero set of both $\II^g_\eta$ and $\II^{g\,0}_\eta$ of $\Sigma$ are independent of the normal field $\eta$.
\end{remark}
As for $\II^g_\eta$, in our reasonings the orthogonal vector $\eta$ in $\II^{g\,0}_\eta$ does not play an essential role, as we are mainly interested in the zero set of $\II^{g\,0}_\eta$. So, we will often use the symbol $\II^{g\,0}_\Sigma$ to denote the trace-free part of the second fundamental form of $\Sigma$.\par

Next propositions prove Theorem \ref{th.main.1} and Corollary \ref{cor.main.1} in the case $n=2$.

\begin{proposition}\label{prop.hyperplane}
A hypersurface $\Sigma$ of $(\L(2,4),T_2)$ is a hyperplane section of $\L(2,4)$ if and only if $\II^{T_2}_\Sigma$ is proportional to $\I^{T_2}_\Sigma$. Furthermore, if $\Sigma$ is a semi-Riemannian hypersurface $\Sigma$ of $(\L(2,4),T_2)$, then it is a hyperplane section if and only if $\II^{T_2\,0}_\Sigma=0$.
\end{proposition}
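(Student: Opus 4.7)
My plan is to reduce the proposition to Corollary \ref{corCorollarioDelCacchio}, which characterizes hyperplane sections of $\L(2,4)$ (equivalently, Monge--Amp\`ere equations) by the proportionality $\Smbl^2(F)\propto\Smbl(F)$ modulo $F$. The remaining task is to translate the two fundamental forms $\I^{T_2}_\Sigma$ and $\II^{T_2}_\Sigma$ into symbol language.

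The easy half is the internal equivalence of the two conditions in the second statement. Under the semi-Riemannian hypothesis $\I^{T_2}_\Sigma$ is non--degenerate, so by definition \eqref{eq.trace.free.second} the vanishing $\II^{T_2\,0}_\Sigma=0$ is literally the statement $\II^{T_2}_\Sigma=\tfrac12 H\cdot\I^{T_2}_\Sigma$, i.e.\ proportionality.

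For the main equivalence I would work fiberwise in the coordinates $(p_{11},p_{12},p_{22})$ on $\L(\CC_m)\simeq\L(2,4)$, in which $T_2$ is conformal to the constant--coefficient discriminant form $dp_{11}\,dp_{22}-\tfrac14 dp_{12}^2$ coming from the total polarization of $\det\in S^2(S^2L)$. Since this is a flat semi-Riemannian representative and the trace--free part $\II^0_\Sigma$ is conformally invariant, I may carry out the calculation with it. For $\Sigma=\{F=0\}$, the $T_2$-normal to $\Sigma$ is dual to $\Smbl(F)\in S^2L$ via the identification $T\L(2,4)\simeq S^2L^*$, while the Hessian of $F$ along the fiber is exactly $d\,\Smbl(F)\in S^2S^2L$. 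A direct algebraic computation --- essentially the same one carried out between \eqref{eqSimbSimb} and \eqref{eqMaledetta} in the proof of Proposition \ref{prop.egregia}, but now reading off an umbilicity condition rather than a root--by--root condition --- shows that $\II^{T_2}_\Sigma\propto\I^{T_2}_\Sigma$ on $\Sigma$ if and only if the totally symmetric projection $\Smbl^2(F)=\mathcal{S}(d\,\Smbl(F))$ is proportional to $\Smbl(F)$ modulo $F$. Combining this with Corollary \ref{corCorollarioDelCacchio} gives that $\Sigma$ is umbilical iff $\Sigma$ is the zero locus of an affine combination of $1,p_{11},p_{12},p_{22}$ and the Pl\"ucker coordinate $p_{11}p_{22}-p_{12}^2$, i.e.\ iff it is a hyperplane section of $\L(2,4)$.

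The step I expect to require the most care is managing the two built-in ambiguities: replacing $F$ by $\lambda F$, or by $F$ plus a multiple of $F$, and rescaling the conformal representative of $T_2$, which transforms $\II$ as in \eqref{eq.second.conf}. Both ambiguities must be seen to drop out --- on the symbol side via the ``modulo $\Smbl(F)$'' quotient, on the geometric side through the conformal invariance of $\II^0_\Sigma$ --- so that the equivalence $\II^{T_2}_\Sigma\propto\I^{T_2}_\Sigma \Longleftrightarrow \Smbl^2(F)\propto\Smbl(F)$ is genuinely intrinsic. Once this bookkeeping is clean, the proposition follows by chaining the equivalences.
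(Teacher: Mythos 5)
Your proposal is correct, and it takes a recognisably different route from the paper's. The paper works in the graph coordinate $p_{22}=h(p_{11},p_{12})$, computes that $\II^{T_2}_\Sigma\propto\I^{T_2}_\Sigma$ is exactly the system \eqref{eq.system.Pi0S.2}, and then cites Boillat's explicit integration of that system to identify the solutions with Monge--Amp\`ere equations; for the second statement it re-derives the three equations \eqref{eq.system.Pi0S} of $\II^{T_2\,0}_\Sigma=0$ and checks by hand that they collapse to \eqref{eq.system.Pi0S.2} precisely when $\det\I^{T_2}_\Sigma\neq0$ (treating $h_{p_{11}}=0$, $h_{p_{12}}=0$ separately). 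You instead route through Corollary \ref{corCorollarioDelCacchio}, so the only thing left to prove is the equivalence $\II^{T_2}_\Sigma\propto\I^{T_2}_\Sigma\Leftrightarrow\Smbl^2(F)\propto\Smbl(F)$; since Corollary \ref{corCorollarioDelCacchio} itself rests on Boillat in the non-hyperbolic case, both arguments bottom out at the same external fact. Your central claim, which you assert rather than compute, is true, and it is worth noting that it can be made essentially computation-free: in the flat affine chart $\II_\Sigma$ is $d\,\Smbl(F)$ restricted to $\ker dF$, so umbilicity reads $d\,\Smbl(F)=f\,T_2+\Smbl(F)\odot\xi$ for some function $f$ and some $\xi\in S^2L$; applying the symmetrisation $\mathcal{S}$ of \eqref{eq.symmetrization} and using that $\ker\mathcal{S}=S^2_{T_2}=\langle T_2\rangle$ for $n=2$ turns this into exactly $\Smbl^2(F)=\xi\cdot\Smbl(F)$, and conversely. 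This is arguably cleaner than the paper's coordinate computation, and your derivation of the trace-free statement from the identity $\II^{T_2\,0}_\Sigma=\II^{T_2}_\Sigma-\tfrac12 H\,\I^{T_2}_\Sigma$ under non-degeneracy likewise replaces the paper's explicit check of \eqref{eq.system.Pi0S.3} by a one-line tensor argument. What the paper's version buys in exchange is the explicit system \eqref{eq.system.Pi0S.2}, which ties the geometric statement directly to the classical complete-exceptionality equations; what yours buys is modularity (it reuses the symbolic characterisation of Section 3.3) and a correct, explicit accounting of the two rescaling ambiguities, which the paper leaves implicit.
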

\begin{proof}
Without loss of generality, we suppose that $\Sigma$ is locally described by $\Sigma=\{F=p_{22}-h(p_{11},p_{12})=0\}$
so that $T\Sigma=\langle (1,0,h_{p_{11}}),(0,1,h_{p_{12}}) \rangle$. The second fundamental form $\II^{T_2}_\Sigma$ is proportional to the first fundamental form $\I^{T_2}_\Sigma$ (here we do not assume that $\I^{T_2}_\Sigma$ is necessarily non-degenerate) if and only if system \eqref{eq.system.Pi0S.2} is satisfied (we recall that here $h=h(p_{11},p_{12})$). As we already seen, this system can be explicitly solved: its general solution $h$ gives a Monge-Amp\`ere equation $\{F=p_{22}-h=0\}$ \cite{MR1139843,MR0481580}.

\smallskip

Let now assume that $\Sigma$ is semi-Riemannian, so that the determinant of the first fundamental form $\I^{T_2}_\Sigma\neq 0$ is not zero, i.e. $4h_{p_{11}}+h_{p_{12}}^2\neq 0$. The condition $\II^{T_2\,0}_\Sigma=0$ gives the following system of three equations:
\begin{equation}\label{eq.system.Pi0S}
\left\{
\begin{array}{l}
2h_{p_{11}p_{11}}h_{p_{11}} + h_{p_{11}p_{11}}h_{p_{12}}^2 -2h_{p_{11}}h_{p_{12}}h_{p_{11}p_{12}} + 2h_{p_{11}}^2h_{p_{12}p_{12}}=0
\\
\\
4h_{p_{11}p_{12}}h_{p_{11}} - h_{p_{12}} h_{p_{11}p_{11}} + h_{p_{12}} h_{p_{11}} h_{p_{12}p_{12}}=0
\\
\\
2h_{p_{11}} h_{p_{12}p_{12}} + h_{p_{12}p_{12}} h_{p_{12}}^2 + 2h_{p_{11}p_{11}} + 2h_{p_{12}}h_{p_{11}p_{12}} =0
\end{array}
\right.
\end{equation}
If $h_{p_{11}}=0$, then the first two equations of \eqref{eq.system.Pi0S} are identically satisfied, whereas the third one gives $h_{p_{12}p_{12}}=0$, so that $h=K_1p_{12}+K_2$. The same reasoning holds if $h_{p_{12}}=0$, so that in the remaining part of the proof we suppose
\begin{equation}\label{eq.hr.ns.not.0}
h_{p_{11}}\neq 0\,, \quad h_{p_{12}}\neq 0
\end{equation}
System \eqref{eq.system.Pi0S} is equivalent to \eqref{eq.system.Pi0S.2}.
In fact, if we denote by $\mathrm{eq_1}$ and $\mathrm{eq_2}$, respectively, the first and the second left-hand side term of system \eqref{eq.system.Pi0S.2}, then system \eqref{eq.system.Pi0S} reads as follows:
\begin{equation}\label{eq.system.Pi0S.3}
\left\{
\begin{array}{l}
(2h_{p_{11}}+h_{p_{12}}^2)\mathrm{eq_1} - h_{p_{11}}h_{p_{12}}\mathrm{eq_2}=0
\\
\\
-h_{p_{12}}\mathrm{eq_1}+2h_{p_{11}}\mathrm{eq_2}=0
\\
\\
2\mathrm{eq_1}+h_{p_{12}}\mathrm{eq_2}=0
\end{array}
\right.
\end{equation}
The solution of this system is $\mathrm{eq_1}=\mathrm{eq_2}=0$ if and only if $h_{p_{11}}h_{p_{12}}(4h_{p_{11}}+h_{p_{12}}^2)\neq 0$. This inequality is satisfied as we are supposing \eqref{eq.hr.ns.not.0} and that $\Sigma$ is a semi-Riemannian hypersurface.
\end{proof}

\begin{proposition}
Let $\Sigma$ be a hyperplane section of $\L(2,4)$. Then it corresponds to a hyperbolic (resp. elliptic, parabolic) Monge-Amp\`ere equation iff $\det(\I^{T_2}_\Sigma)<0$ (resp. $\det(\I^{T_2}_\Sigma)>0$, $\det(\I^{T_2}_\Sigma)=0$).
\end{proposition}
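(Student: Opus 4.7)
The plan is a direct coordinate computation. By Proposition \ref{prop.hyperplane}, every hyperplane section may be locally written as $\Sigma=\{F=p_{22}-h(p_{11},p_{12})=0\}$, and the characteristic polynomial \eqref{eq.symb.lambda} for this $F$ reduces to $\eta^2-h_{p_{12}}\xi\eta-h_{p_{11}}\xi^2=0$, with discriminant $\Delta=h_{p_{12}}^2+4h_{p_{11}}$. The associated Monge--Amp\`ere equation is thus hyperbolic, elliptic, or parabolic exactly when $\Delta$ is, respectively, positive, negative, or zero.

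Next I would compute $T_2$ in the chart $(p_{11},p_{12},p_{22})$. Since $T_2$ is (up to a conformal factor) the total polarisation of the determinant $p_{11}p_{22}-p_{12}^2$ viewed as a polynomial on $S^2L$, the only non--zero pairings among the basis $(\partial_{p_{11}},\partial_{p_{12}},\partial_{p_{22}})$ are $T_2(\partial_{p_{11}},\partial_{p_{22}})$ and $T_2(\partial_{p_{12}},\partial_{p_{12}})$, which carry opposite signs. Pulling back along the inclusion $\Sigma\hookrightarrow\L(2,4)$, using that $T\Sigma$ is spanned by $\partial_{p_{11}}+h_{p_{11}}\partial_{p_{22}}$ and $\partial_{p_{12}}+h_{p_{12}}\partial_{p_{22}}$, a short computation yields $\det(\I^{T_2}_\Sigma)=-c\,(4h_{p_{11}}+h_{p_{12}}^2)=-c\,\Delta$ for some positive constant $c$ depending only on the chosen normalisation of $T_2$. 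This is consistent with the quantity $4h_{p_{11}}+h_{p_{12}}^2$ already identified in the proof of Proposition \ref{prop.hyperplane} as governing the non--degeneracy of $\I^{T_2}_\Sigma$.

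The three equivalences in the statement then follow immediately from the opposite--sign relation $\det(\I^{T_2}_\Sigma)=-c\,\Delta$. The only point requiring a brief verification is invariance under the conformal freedom $T_2\mapsto e^{2\lambda}T_2$: the induced bilinear form rescales by $e^{2\lambda}$, so its $2\times 2$ determinant rescales by $e^{4\lambda}>0$, and the sign of $\det(\I^{T_2}_\Sigma)$---hence the trichotomy hyperbolic/elliptic/parabolic---is an intrinsic invariant of the conformal class of $T_2$ on $\L(2,4)$. I do not foresee any genuine obstacle; the main task is simply careful sign bookkeeping between the PDE--theoretic discriminant and the pulled--back conformal metric.
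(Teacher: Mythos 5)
Your proposal is correct and follows essentially the same route as the paper: a direct computation of $\det(\I^{T_2}_\Sigma)$ in the graph chart $p_{22}=h(p_{11},p_{12})$, exhibiting it as a negative multiple of the discriminant (your $4h_{p_{11}}+h_{p_{12}}^2$ equals the paper's $\Delta/(k_3+k_0p_{11})^2$ up to the positive factor, and indeed $\det(\I^{T_2}_\Sigma)=-\tfrac14(4h_{p_{11}}+h_{p_{12}}^2)$ with the standard normalisation of $T_2$). The only cosmetic difference is that the paper phrases the discriminant in terms of the coefficients $k_0,\dots,k_4$ of the Monge--Amp\`ere equation rather than of the characteristic polynomial of $F=p_{22}-h$, which is equivalent.
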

\begin{proof}
We recall that a Monge-Amp\`ere equation
$$
k_0(p_{11}p_{22}-p_{12}^2) + k_1 p_{11} + k_2 p_{12} + k_3 p_{22} + k_4=0
$$
is hyperbolic (resp. elliptic, parabolic) iff its discriminant
$$
\Delta=k_2^2-4k_1k_3+4k_0k_4
$$
is greater than (resp. less than, equal to) zero. By using the notation of Proposition \ref{prop.hyperplane}, we have that
$$
\det(\I^{T_2}_\Sigma)=-\frac{1}{4}\frac{\Delta}{(k_3+k_0p_{11})^2}
$$
and the proposition follows.
\end{proof}

\subsection{Hyperplane sections of $\L(3,6)$}\label{secL36}


Infinitesimal conformal symmetries of $T_3$ form a $21$--dimensional Lie algebra, which by dimensional reasons must coincide with $\Sp(6)$. Below we write a list of generators.
$$
\partial_{\xi_i}\,,
\quad \xi_i\partial_{\xi_\ell} + \frac{1}{2}\xi_j\partial_{\xi_m} + \frac{1}{2}\xi_k\partial_{\xi_n}\,,
\quad
\xi_i^2\partial_{\xi_1} + \xi_i\xi_j \partial_{\xi_2}   +\xi_i\xi_k\partial_{\xi_3} + \xi_j^2  \partial_{\xi_4}  +\xi_j\xi_k\partial_{\xi_5} + \xi_k^2  \partial_{\xi_6}
$$
$$
\xi_1\xi_2 \partial_{\xi_1} + \frac{1}{2}(\xi_2^2+\xi_1\xi_4) \partial_{\xi_2} + \frac{1}{2}(\xi_2\xi_3+\xi_1\xi_5) \partial_{\xi_3} + \xi_4\xi_2\partial_{\xi_4} + \frac{1}{2}(\xi_5\xi_2+\xi_3\xi_4) \partial_{\xi_5} + \xi_3\xi_5 \partial_{\xi_6}
$$
$$
\xi_1\xi_3 \partial_{\xi_1} + \frac{1}{2}(\xi_2\xi_3+\xi_1\xi_5) \partial_{\xi_2} +  \frac{1}{2}(\xi_3^2+\xi_1\xi_6) \partial_{\xi_3} +  \xi_5\xi_2 \partial_{\xi_4} +  \frac{1}{2}(\xi_3\xi_5+\xi_2\xi_6) \partial_{\xi_5} +  \xi_6\xi_3\partial_{\xi_6}
$$
$$
\xi_2\xi_3 \partial_{\xi_1} + \frac{1}{2}(\xi_5\xi_2+\xi_3\xi_4) \partial_{\xi_2} + \frac{1}{2}(\xi_3\xi_5+\xi_2\xi_6) \partial_{\xi_3} + \xi_4\xi_5 \partial_{\xi_4} + \frac{1}{2}(\xi_5^2+\xi_4\xi_6) \partial_{\xi_5} + \xi_6\xi_5 \partial_{\xi_6}
$$
$(i,j,k),(\ell,m,n)\in\{(1,2,3),(2,4,5),(3,5,6)\}$
where we used the notation $(\xi_1,\xi_2,\dots,\xi_6)=(p_{11},p_{12},\dots,p_{33})$

\smallskip
We note that the above generators are all of non-maximal rank. Nevertheless the stretching $\sum_{i\leq j}p_{ij}\partial_{p_{ij}}$, which can be obtained as a particular linear combination with constant coefficients of the above vector fields, is of maximal rank. A straightforward computation based also on the usage of a Maple symbolic computation package, shows that condition \eqref{cond.1}, for $n=3$, is independent of the chosen infinitesimal symmetry of maximal rank. If $\E=\{F=0\}$, condition \eqref{cond.1} is described by
$
F_{p_{ij}p_{hk}}\xi_i\xi_j\xi_h\xi_k=G^{{ij}}F_{p_{hk}}\xi_i\xi_j\xi_h\xi_k
$, for some $G^{ij}$, which means precisely that the symmetric derivative of the symbol is proportional to the symbol itself.
The result thus follows\footnote{Recall  that  Corollary \ref{corCorollarioDelCacchio} was formulated in the hypothesis $n=2$, though its validity is general (see Boillat  \cite{MR1139843}).} from Corollary \ref{corCorollarioDelCacchio}.


\section{Higher degree hypersurface sections of $\LL(n,2n)$}\label{sec.Jan}


The main result of  Section \ref{secCentrale} above is Theorem \ref{th.main.1}. It allows to decide when a hypersurface $\Sigma$ in the Lagrangian Grassmannian   $ X_n = \LL(n,2n) $ is a hyperplane section, by using tensors naturally associated with $\Sigma$.  Doing so, Theorem \ref{th.main.1} revealed  the intrinsic character of  complete exceptionality, and this is its main strength. The drawback is that Theorem \ref{th.main.1} has been proved only for $n=2,3$, and it  still has to be extended   to the case of higher--degree sections.\par
In this section, we embrace an extrinsic perspective, which will lead us  to characterise certain distinguished functions on $X_n$, cutting out   $r^\textrm{th}$ degree hypersurface sections $\Sigma$   of $X_n$, \emph{for arbitrary values of $n$ and $r$}. To this end,  we shall exploit the natural action of the symplectic group
$\Sp_{2n}$ on    $ X_n$, 
and we shall find a natural differential operator between
certain bundles over $X_n$, such that the zero--loci of the sections in its kernel
are precisely the (regular) intersections with  $r^\textrm{th}$ degree  hypersurfaces in the
ambient projective space $\p^M$.\par
 The main result of this section, Theorem \ref{thm:final-bgg}, is thus more general than      Theorem \ref{th.main.1}, since it is valid for all dimensions $n$ and degrees $r$, but, at the same time, it is less direct: 
rather than characterising a hypersurface $\Sigma \subset X_n$ in intrinsic terms, it only
characterises certain distinguished functions cutting $\Sigma$ out. \par
Needless to say, Theorem \ref{th.main.1}  and Theorem \ref{thm:final-bgg} match on their common overlapping. In Section \ref{subsecLinkSecJanConLaPrecedente} we show how to obtain from Theorem \ref{thm:final-bgg} an intrinsic \emph{necessary} condition for complete exceptionality which, expectedly, turns out to be also sufficient in the cases $n=2,3$ covered by Theorem \ref{th.main.1}.\par
The reader already familiar with the BGG  techniques can safely jump to the main result, Theorem \ref{thm:final-bgg}, by skipping the subsections   \ref{eqSubSecJan1}--\ref{eqSubSecJanUltima}, whose purpose is that of providing a minimal self--consistent  background on the subject for the  non--experts. Apparently, there is no  alternative way to prove the main  Theorem \ref{thm:final-bgg}, other than the BGG resolution, in spite of the simple formulation of the Theorem itself.   \par
Following the proof of Theorem \ref{thm:final-bgg}, there are   two concluding sections. 
 In Section \ref{secLinkWithProjGeom}  we recall that $X_n$ is rational, and use this fact to obtain a simple confirmation of the fact that    the  functions cutting out completely exceptional PDEs are    suitable pull--backs of the linear functions on $\p^M$. 
 In   Section \ref{secLinkWithJets} we show that the $r^\textrm{th}$ jet space of hypersurfaces of $X_n$ is foliated by canonical equations, each of which is (noncanonically) equivalent to the equation of (generalised) complete exceptionality, thus confirming the natural character of this property.

%

\subsection{The homogeneous structure of Lagrangian Grassmannian and its minimal projective embedding}\label{eqSubSecJan1}
Our point of departure is the fact that $X_n$ is a homogeneous space for
$\Sp_{2n}$. In order to make it explicit, we identify $X_n$ with the Grassmannian of
$n$--dimensional Lagrangian subspaces of $\R^n \oplus \R^{n*}$, where the
latter space carries the tautological symplectic structure $\omega$  defined by
$\omega|_{\R^n}\equiv 0$, $\omega(v,\alpha):=\langle v,\alpha \rangle$ for all $v\in\R^n$ and $\alpha\in\R^{n\ast}$, and  $\omega|_{\R^{n\ast}}\equiv 0$. 
The group $\Sp_{2n}$ is precisely the automorphism group of $\omega$, thus obviously
acting on $X_n$. We fix an origin in $X_n$, corresponding to the isotropic subspace
$\R^{n*}$, and we let $P \subset \Sp_{2n}$ be its stabiliser. The group $P$ is
a semi--direct product $P \simeq \GL_n \ltimes S^2\R^{n*}$.
Its Levi factor $\GL_n$
is the subgroup of $\Sp_{2n}$ preserving the direct sum decomposition
in $\R^n\oplus\R^{n*}$, i.e., stabilising the complementary $\R^n$ in addition to $\R^{n*}$.
The unipotent radical $S^2\R^{n*}$ is abelian and maps $\R^n$ into $\R^{n*}$ in the essentially
unique $\GL_n$-equivariant manner. In the following discussion we shall often
work with representations of both $\Sp_{2n}$ and the subgroup $\GL_n$; in particular,
an irreducible representation of $\Sp_{2n}$ will often be decomposed into $\GL_n$-irreducible
summands.

The Pl\"ucker embedding (introduced in Section \ref{sec.Plucker}) is $\Sp_{2n}$-equivariant and realises $X_n$
as a subvariety in the projectivisation of the space of $n$-forms
on $\R^n\oplus\R^{n*}$ orthogonal to $\omega$ (i.e., precisely  the space $\p^M$ introduced  in   Remark \ref{remMinimality}). This latter space,
temporarily denoted $W$,
is the $\Sp_{2n}$-irreducible summand of $\Lambda^n (\R^n\oplus\R^{n*})$
containing the one-dimensional subspace $\det\R^{n*}$. We then
embed
$ X_n \hookrightarrow \p W $
identifying the Lagrangian Grassmannian
with the $\Sp_{2n}$-orbit of the point corresponding to $\det\R^{n*}$.
Now, given a non--zero element $f \in S^r W^*$ for some $r > 0$, we may
intersect $X_n$ with the zero--locus of $f$: the resulting subset $\Sigma \subset X_n$
consists of points corresponding to one--dimensional subspaces of $W$ on which
$f$ restricts to zero. In general, it may happen that $\Sigma$ is all of $X_n$,
i.e., $X_n$ is contained in the zero--locus of $f$; as we shall see later, one
only needs to consider the $\Sp_{2n}$-irreducible summand of $S^r W^*$
containing the image of $W^*$ under the $r^\textrm{th}$  power map.
\subsection{The idea of  the BGG resolution of the space of $r^\textrm{th}$ degree sections}
Before proceeding to the more technical part of this section, let us
for a moment pretend that the elements of $S^r W^*$ give rise to
actual functions on $X_n$, rather than sections of a line bundle---for
instance, intersect with a standard affine open in $\p W$ and trivialise.
We would then obtain a finite dimensional subspace $F \subset C^\infty(X_n)$,
with the property that $r^\textrm{th}$ degree   hypersurface sections of $X_n$ are
precisely the zero--loci of non--trivial elements of $F$. How does one
decide whether a given smooth function $f \in C^\infty(X_n)$ belongs
to $F$ though? Solving for a linear combination of some basis functions
is certainly not a viable method. Instead, one seeks to identify $F$ as
the kernel of a differential operator. The basic example is
the identification of constant functions with the kernel of the
exterior derivative $d : C^\infty(X_n) \to \Omega^1(X_n)$. While trivial
from the point of view of our problem, it fits into an important
object---the de Rham resolution of the locally constant sheaf
$
0 \to \underline\R \to \Omega^0 \to \Omega^1 \to \dots
$
by differential operators. It turns out that our problem of characterising
$F$ as the kernel of a differential operator finds its place and solution
in an analogous resolution. The latter, known as the BGG resolution
(or rather its generalised version due to Lepowsky), resolves
a finite--dimensional irreducible representation of $\Sp_{2n}$ by
a sequence of equivariant differential operators between certain natural
vector bundles on $X_n$. Its first map is a differential operator
acting on sections of the line bundle $\mathcal{O}_{X_n}(r)$, the $r^\textrm{th}$ power
of the dual of the tautological line bundle of $\p W$ restricted to $X_n$. The kernel
of this operator consists precisely of restrictions to $X_n$ of elements of $S^r W^*$, which
  cut  out $r^\textrm{th}$ degree  hypersurface sections of $X_n$.
\subsection{Representation--theoretic preliminaries}
In order to define the bundles and operators involved, we need to set up
some rudimentary representation--theoretic notation. Passing to the level
of Lie algebras, we have the inclusions
$
\gl_n \subset \gp_n \subset \sp_{2n}$ and $
\gp_n \simeq \gl_n \oplus S^2\R^{n*}$  (semidirect sum).
In fact, $\gp_n$ admits a $\GL_n$-invariant complement, thus giving rise
to a $\GL_n$--equivariant decomposition
\begin{equation}\label{eqn:graded-decomp}
\sp_{2n} = S^2 \R^n \oplus \gl_n \oplus S^2 \R^{n*}\, ,
\end{equation}
where the extremal summands are abelian, and their commutator lies in the middle one.
Recall that, being a split real form of a simple Lie algebra, $\sp_{2n}$ contains
a unique up to conjugacy split Cartan subalgebra. That is, a maximal abelian
self--normalising subalgebra $\gh$ such that $\sp_{2n}$ decomposes as a direct sum
of $\gh$ and finitely many
one--dimensional $\gh$--invariant subspaces
labelled by distinct elements of $\gh^*$. These one-dimensional summands are the \emph{root
subspaces}. An element $H \in \gh$ acts on a root subspace labelled by $\alpha \in \gh^*$
as the scalar $\alpha(H)\in\R$. The subset of $\gh^*$ consisting of these labels
is the \emph{root system}, and its elements are the \emph{roots}.
To each root $\alpha$ one associates the \emph{coroot} $H_\alpha \in \gh$,
acting as $\pm 2$ on the root subspaces labelled by $\pm\alpha$ and,
together with these, spanning a sub--algebra isomorphic to $\mathfrak{sl}_2$.\par
Now, this decomposition
of the adjoint representation extends to an arbitrary finite--dimensional representation
of $\sp_{2n}$. Namely, a finite--dimensional representation decomposes under the
action of $\gh$ into a direct sum of invariant subspaces labelled by
distinct elements of $\gh^*$, this time called \emph{weights}: an element
$H \in \gh$ acts on a weight subspace with weight $\lambda\in\gh^*$ as the scalar
$\lambda(H) \in \R$. Thus, roots are simply the weights of the adjoint representation.
The root system is invariant under reflection about the origin $0 \in \gh^*$. One may
choose a hyperplane in $\gh^*$ such that half of the roots lie in one of the resulting
half--spaces, and declare these as the \emph{positive roots}. Given such choice,
one says that a non--zero element of a given representation of $\sp_{2n}$ is
a \emph{highest weight vector} if it is annihilated by all elements of $\sp_{2n}$
lying in root subspaces labelled by positive roots. In case of an \emph{irreducible}
finite--dimensional representation, a highest weight vector is indeed a
weight vector, i.e., an element of a weight subspace, and the weight that
labels it is called the \emph{highest weight}. Then, the highest weight
is unique, with a one--dimensional weight subspace, and characterises the
irreducible representation up to isomorphism. An element $\lambda\in\gh^*$
is the highest weight of an irreducible finite--dimensional representation
if and only if $\lambda(H_\alpha)$ is a non--negative integer
for each positive root $\alpha$.
Such weights
are called \emph{dominant integral}.

In our case, we will choose the Cartan subalgebra $\gh$ to consist of
the diagonal matrices in $\gl_n \subset \sp_{2n}$. Then, each
summand in \eqref{eqn:graded-decomp} decomposes into a
direct sum of root subspaces, plus $\gh$ in $\gl_n$. We choose
the positive roots so that they label root subspaces spanning
$S^2\R^{n*}$ plus the space of upper--triangular matrices in $\gl_n$.
Now, given a dominant integral weight $\lambda$ we have the
irreducible representation $V_\lambda$ with highest weight $\lambda$;
we choose a highest weight vector $v_\lambda \in V_\lambda$, unique up to scale.
Restricting $V_\lambda$ to the reductive subalgebra $\gl_n$ gives rise to
a decomposition into $\GL_n$--irreducibles; the unique one containing $v_\lambda$
will be denoted $L_\lambda$. Finally, we note that since $\gh$ is also a Cartan
subalgebra for $\gl_n$, we may use the elements of $\gh^*$ to label irreducible
finite--dimensional representations of $\gl_n$. Some of these are the $L_\lambda \subset V_\lambda$
as described above, but one may also make sense of $L_\lambda$ for $\lambda\in\gl_n$
that is dominant integral for $\gl_n$, but not for $\sp_{2n}$ (here we consider
the root system of $\gl_n$ as a subsystem of the root system of $\sp_{2n}$, with
a compatible choice of positive roots, whence the condition on
$\lambda(H_\alpha)$ is imposed only for a subset of positive roots $\alpha$).
\subsection{Tautological bundle and its powers as associated bundles}
We are now ready to describe the Pl\"ucker embedding $X_n \to \p W$ in our
newly set--up notation. First, $W$ is an irreducible representation of $\sp_{2n}$,
so we may write $W = V_\lambda$ for some dominant integral weight $\lambda$ (we shall
from now on fix $\lambda$ to denote this particular weight). Then,
an inspection of the action of $S^2\R^{n*}$ on $W$ shows that the elements of
$\det\R^{n*}$ are the highest weight vectors. Accordingly, $X_n \simeq \Sp_{2n}/P$
is the orbit of the highest weight line $[v_\lambda]$ in $\p V_\lambda$. But since
$\GL_n$ acts on $\det\R^{n*}$, we actually have $[v_\lambda] = L_\lambda$. Now,
given $g \in \Sp_{2n}$, we may view $gL_\lambda \subset V_\lambda$
as the fibre of the \emph{tautological
line bundle} at $gP \in X_n$. The action $\Sp_{2n} \times L_{\lambda} \to V_\lambda$
thus allows us to identify the tautological line bundle over $X_n$ with
the associated vector bundle
 $
\LLL_\lambda := \Sp_{2n} \times^P L_\lambda
 $,
where $S^2\R^{n*} \subset P$ acts on $L_\lambda$ trivially by definition.
We may in fact apply this construction for any integral dominant weight
$\mu$: first extend the $\GL_n$--irreducible representation $L_\mu$
to a representation of $P$ with a trivial action of $S^2\R^{n*}$, then
form the associated bundle $\LLL_\mu$. Note that the $\Sp_{2n}$--action
on $X_n$ lifts to the total space of $L_\mu$, and thus turns its
space of global sections in to a representation.\par
But keep in mind our main goal:
we are interested in $r^\textrm{th}$ degree  hypersurface sections of $X_n$, and
thus in elements of $S^r V_\lambda^*$. The symmetric power $S^r V_\lambda$ is
in general not irreducible; there is however a unique irreducible summand containing
$v_\lambda^r$, and as such necessarily isomorphic to $V_{r\lambda}$. Furthermore, there
is no other summand with the same highest weight; accordingly,
the dual $S^r V_\lambda^*$ contains a unique irreducible summand isomorphic to $V_{r\lambda}^*$.
Once again, the line spanned by $v_\lambda^r$ is actually $\GL_n$--invariant, so that
$L_{r\lambda} = \langle v_\lambda^r\rangle \simeq L_\lambda^r$. Observe that evaluating
an element $f \in S^r V_\lambda^*$ on the tautological bundle of $X_n$ gives rise to a
global section of $S^r \LLL_\lambda^* \simeq \LLL_{r\lambda}^*$. By the associated bundle
construction, the latter may be identified with a $P$--equivariant map
$\Sp_{2n} \to L_{r\lambda}^*$.
We may then conveniently state the following.
\begin{lemma}\label{lem:eval-action}
There is a commutative diagram of $\Sp_{2n}$--equivariant maps
\begin{equation*}\xymatrix{
**[l]S^r V_\lambda^* \ar[r] \ar[d]_{\textrm{\normalfont pr}} &**[r] \Gamma(X_n, \LLL_{r\lambda}^*)   \ar[d]^{\rotatebox{-90}{$\simeq$}} \\
**[l]V_{r\lambda}^* \ar[r]&**[r] C^\infty(\Sp_{2n}, L_{r\lambda}^*)^P
}\end{equation*}
where the top horizontal arrow is the evaluation map, while the
bottom one is adjoint to the action map
$\Sp_{2n} \times L_{r\lambda} \to V_{r\lambda}$.
\end{lemma}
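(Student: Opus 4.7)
The plan is to prove the lemma by a diagram chase, unpacking each of the four maps and then observing that the two compositions coincide. The whole statement is essentially bookkeeping, because once the identifications are spelled out, commutativity and $\Sp_{2n}$--equivariance are tautological.

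First, I would invoke the standard identification $\Gamma(\Sp_{2n} \times^P F) \simeq C^\infty(\Sp_{2n}, F)^P$, valid for any associated bundle on $X_n = \Sp_{2n}/P$, applied to $F = L_{r\lambda}^*$: a section $\sigma$ corresponds to the function $F_\sigma$ whose value at $g$ is the fibre--representative of $\sigma(gP)$ in the frame $g$. This gives the right vertical isomorphism.

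Next, I would describe the top arrow. Given $f \in S^r V_\lambda^*$, viewed as a homogeneous polynomial of degree $r$ on $V_\lambda$, its restriction to the line $gL_\lambda \subset V_\lambda$ is an element of $S^r (gL_\lambda)^* = (g L_\lambda^{\otimes r})^* = g L_{r\lambda}^*$, so $gP \mapsto f|_{gL_\lambda}$ is a well--defined section of $\LLL_{r\lambda}^*$; under the identification above it reads
$F_f(g) : w \in L_{r\lambda} \mapsto f(g \cdot w),$
where we implicitly use the Cartan inclusion $L_{r\lambda} \subset V_{r\lambda} \subset S^r V_\lambda$ recalled in the excerpt. The left vertical arrow $\textrm{pr}$ is defined as the surjection dual to this Cartan inclusion, and the bottom arrow is $\phi \mapsto \bigl(g \mapsto (w \mapsto \phi(g\cdot w))\bigr)$; its $P$--equivariance $F(gp)(w) = F(g)(pw)$ is just $\phi(gp\cdot w) = \phi(g \cdot pw)$, i.e.\ associativity of the action.

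Commutativity of the diagram is then immediate: following either route, $f$ is sent to the function $g \mapsto (w \mapsto f(g\cdot w))$, because $g \cdot w$ lies in $V_{r\lambda} \subset S^r V_\lambda$ and $f$ agrees with $\textrm{pr}(f)$ on $V_{r\lambda}$ by definition of $\textrm{pr}$. $\Sp_{2n}$--equivariance of each of the four maps follows at once from left--translation being compatible with every construction involved. I do not foresee a genuine obstacle; the only point requiring care is keeping straight the identification $L_{r\lambda} \simeq L_\lambda^{\otimes r}$ as a $P$--submodule of $V_{r\lambda}$, which, as noted above, is already recorded in the preceding discussion.
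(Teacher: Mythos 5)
Your proof is correct, and since the paper explicitly leaves this lemma ``as a simple yet instructive exercise for the reader,'' your diagram chase is precisely the intended argument: identify $\Gamma(\Sp_{2n}\times^P F)\simeq C^\infty(\Sp_{2n},F)^P$, unpack evaluation via the $r$-th power map $L_\lambda^{r}\simeq L_{r\lambda}\subset V_{r\lambda}\subset S^rV_\lambda$, and observe that both routes send $f$ to $g\mapsto\bigl(w\mapsto f(g\cdot w)\bigr)$ because $g\cdot w\in V_{r\lambda}$ and $\mathrm{pr}(f)=f|_{V_{r\lambda}}$. The one point you rightly flag as needing care --- that $\mathrm{pr}$ is dual to the inclusion $V_{r\lambda}\hookrightarrow S^rV_\lambda$ of the unique summand with that highest weight --- is exactly the content the paper sets up just before the lemma, so nothing is missing.
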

The proof is a left as a simple yet instructive exercise for the reader. Note that
we do in particular find that the remaining irreducible summands in $S^r V_\lambda^*$
consist of  $r^\textrm{th}$ degree homogeneous polynomials vanishing identically on $X_n$.
Hence, the actual $r^\textrm{th}$ degree   hypersurface sections of $X_n$ are precisely the
zero--loci of non--zero elements in the image of
$V_{r\lambda}^* \hookrightarrow \Gamma(X_n, \LLL_{r\lambda}^*)$.
Our task is thus to express this image as the kernel of a differential
operator. The BGG machinery will do this as a first step in the resolution
of $V_{r\lambda}^*$ by equivariant differential operators between natural vector bundles
of the form $\LLL_\mu$. The easiest way to introduce it is in a dual picture, to which
we shall now pass.
\subsection{The dual picture via Verma modules}
Recall how passing from $L_\mu^*$ to $\Gamma(X_n, \LLL_\mu^*)$
produced a representation of $\Sp_{2n}$ from a representation of $\GL_n$.
There is a dual algebraic variant of this construction, producing
a representation of $\sp_{2n}$. We will use $U(-)$ to denote the universal
enveloping algebra functor from Lie algebras to associative algebras over $\R$.
It is customary to speak of $U(\sp_{2n})$--modules rather than representations
of $\sp_{2n}$: these are equivalent. Given a weight $\mu$, dominant integral
for $\gl_n$, we view $L_\mu$ as a $U(\gp)$--module with a trivial action of
$S^2\R^{n*}$. Then, we form the so--called \emph{Verma module} $ U(\sp_{2n}) \otimes_{U(\gp)} L_\mu. $ with highest weight $\mu$, which   is a $U(\sp_{2n})$--module by left action on the first factor. The
induced structure of a $U(\gp)$--module coincides with the natural one
on each factor, and in fact integrates to a representation of $P$. This allows us
to form an associated bundle and leads to a geometric interpretation.
\begin{proposition}\label{pro:verma-jet}
There is a natural $\Sp_{2n}$-equivariant
identification
\begin{equation}
\label{eq:verma-jet}
\Sp_{2n} \times^P [ U(\sp_{2n}) \otimes_{U(\gp)} L_\mu ]^* \simeq J^\infty \LLL_\mu^*
\end{equation}
with the bundle of infinite jets of sections of $\LLL_\mu^*$ over $X_n$.
\end{proposition}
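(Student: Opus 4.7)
The plan is to prove \eqref{eq:verma-jet} at the fiber over the origin $eP\in X_n=\Sp_{2n}/P$, which by the associated bundle construction automatically upgrades to an $\Sp_{2n}$-equivariant bundle isomorphism, provided the fiberwise identification is $P$-equivariant. Realising sections of $\LLL_\mu^*$ as $P$-equivariant functions $\tilde s:\Sp_{2n}\to L_\mu^*$ with $\tilde s(gp)=p^{-1}\cdot\tilde s(g)$, one lets $U(\sp_{2n})$ act on them via left-invariant differential operators (extending the infinitesimal left-translation action of $\Sp_{2n}$) and defines
$$
\alpha(\tilde s)(u\otimes v):=\bigl\langle v,(u\cdot\tilde s)(e)\bigr\rangle,\qquad u\in U(\sp_{2n}),\ v\in L_\mu,
$$
with the canonical pairing $\langle\cdot,\cdot\rangle:L_\mu\otimes L_\mu^*\to\R$. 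Differentiating the equivariance relation shows that for $Y\in\gp$ the left-invariant derivative $Y\cdot\tilde s$ coincides pointwise with the contragredient action of $-Y$ on $L_\mu^*$ applied to $\tilde s$; iterating, this guarantees that $\alpha$ descends to $U(\sp_{2n})\otimes_{U(\gp)}L_\mu$. Since $\alpha(\tilde s)$ depends only on the infinite jet of $\tilde s$ at $e$, it yields a map $\bar\alpha:J^\infty_{eP}\LLL_\mu^*\to[U(\sp_{2n})\otimes_{U(\gp)}L_\mu]^*$, manifestly $\Sp_{2n}$-equivariant and hence $P$-equivariant at the fiber over $eP$.

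To show $\bar\alpha$ is bijective I would invoke the Poincar\'e--Birkhoff--Witt theorem: combined with the $\gl_n$-equivariant grading \eqref{eqn:graded-decomp}, it provides a $\gl_n$-equivariant vector space isomorphism
$$
U(\sp_{2n})\otimes_{U(\gp)}L_\mu\;\simeq\;\Sym(S^2\R^n)\otimes L_\mu.
$$
The exponential map $\xi\in S^2\R^n\mapsto\exp(\xi)\,P$ is a local diffeomorphism near $eP$, identifying $S^2\R^n$ with $T_{eP}X_n$. In these coordinates, the left-invariant vector fields on $\Sp_{2n}$ associated with elements of $S^2\R^n$ agree with the corresponding coordinate derivatives to first order at $e$, and by repeated PBW reordering $\bar\alpha$ is recognised as the usual Taylor expansion at $eP$, which provides the desired bijection
$$
J^\infty_{eP}\LLL_\mu^*\;\simeq\;\prod_{k\geq 0} S^k T^*_{eP}X_n\otimes L_\mu^*.
$$

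The main obstacle is precisely the reconciliation of left-invariant derivatives with coordinate derivatives in exponential coordinates beyond order one: commuting a product of elements of $S^2\R^n$ past a factor in $\gp$ generates correction terms in $\gl_n$ dictated by \eqref{eqn:graded-decomp}, which upon further reordering act algebraically on $\tilde s(e)\in L_\mu^*$. The Baker--Campbell--Hausdorff formula applied to $\exp(\xi)\exp(tX)$ for $X\in\sp_{2n}$, $\xi\in S^2\R^n$, confirms that these corrections are exactly those needed to match the Taylor expansion of $\tilde s$ in exponential coordinates with its expression via left-invariant derivatives; on associated graded pieces, $\bar\alpha$ reduces to the evident duality between $\Sym(S^2\R^n)$ and $\Sym(S^2\R^{n*})$, which suffices to conclude bijectivity. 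The $\Sp_{2n}$-equivariance of $\alpha$ then automatically promotes $\bar\alpha$ to a $P$-equivariant isomorphism at the fiber over $eP$, completing the proof via the associated bundle construction.
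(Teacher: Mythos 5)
Your argument is correct and takes essentially the same route as the paper, whose own ``proof'' is only a three-item sketch (realise sections of $\LLL_\mu^*$ as $P$-equivariant functions on $\Sp_{2n}$, represent $\sp_{2n}$ by invariant vector fields, and hence $U(\sp_{2n})$ by invariant differential operators); you supply the missing details, namely the descent through $\otimes_{U(\gp)}$ and the PBW/associated-graded argument for bijectivity. The only quibble is terminological: left-invariant vector fields generate \emph{right} translations rather than the infinitesimal left-translation action, but this does not affect your construction.
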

\begin{proof}[Sketch of a proof]
The basic ingredients of the proof are: (1) identify sections
of an associated bundle with $P$--equivariant maps from $\Sp_{2n}$, (2)
represent $\sp_{2n}$ as vector fields on $\Sp_{2n}$, (3)
represent $U(\sp_{2n})$ as differential operators on $\Sp_{2n}$.
\end{proof}

We thus see that Verma modules are, in a suitable sense, dual to infinite jet bundles.\footnote{Since $U(\sp_{2n})\otimes_{U(\gp)}L_\mu$ is not just
a representation of $P$ but also of $\sp_{2n}$, the left-invariant Maurer--Cartan form
on $\Sp_{2n}$ induces a flat connection on the associated vector bundle:  the canonical flat connection on the bundle of infinite jets.}
Recalling the relation between jets and differential operators, we further identify
homomorphisms of Verma modules with equivariant differential operators:
\begin{corollary}\label{cor:equiv-diff}
Let $\mu, \nu \in \gh^*$ be a pair of dominant integral weights
for $\gl_n$. There is a natural isomorphism
$
\Hom_{U(\sp_{2n})}(
U(\sp_{2n})\otimes_{U(\gp)} L_\mu,
U(\sp_{2n})\otimes_{U(\gp)} L_\nu) \simeq
\mathrm{Diff}(\LLL_\nu^*, \LLL_\mu^*)^{\Sp_{2n}}
$
between the space of $U(\sp_{2n})$-module homomorphisms
on the left hand side and the space of $\Sp_{2n}$--equivariant
differential operators on the right hand side.
\end{corollary}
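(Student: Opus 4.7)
The plan is to chain together three natural bijections connecting the two sides of the claimed isomorphism, using Proposition \ref{pro:verma-jet} as the bridge and culminating in Frobenius reciprocity for the induced module. Writing $M_\mu := U(\sp_{2n}) \otimes_{U(\gp)} L_\mu$, and analogously for $M_\nu$, the chain I would construct is
\begin{equation*}
\mathrm{Diff}(\LLL_\nu^*, \LLL_\mu^*)^{\Sp_{2n}}
\;\simeq\;
\Hom_P(M_\nu^*, L_\mu^*)_{\mathrm{fin.\,ord.}}
\;\simeq\;
\Hom_{\gp}(L_\mu, M_\nu)
\;\simeq\;
\Hom_{U(\sp_{2n})}(M_\mu, M_\nu),
\end{equation*}
where the subscript indicates a finite--order/continuity condition explained below.

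For the first step, I would note that an $\Sp_{2n}$--equivariant differential operator $D:\Gamma(\LLL_\nu^*)\to\Gamma(\LLL_\mu^*)$ of some finite order $k$ corresponds to an $\Sp_{2n}$--equivariant vector bundle morphism $J^k\LLL_\nu^*\to\LLL_\mu^*$. Proposition \ref{pro:verma-jet} (together with its evident refinement to finite jets) identifies the source with the associated bundle $\Sp_{2n}\times^P(F^k M_\nu)^*$, where $F^\bullet M_\nu$ denotes the natural PBW filtration. The standard dictionary between associated bundles and representations of the stabiliser on fibres then turns such morphisms into $P$--equivariant (equivalently $\gp$--equivariant, by connectedness of $P$ and algebraicity of all representations involved) linear maps $(F^k M_\nu)^*\to L_\mu^*$; taking the direct limit in $k$ gives the first bijection. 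The second bijection is pure linear algebra: since $L_\mu$ is finite--dimensional, dualisation produces a $\gp$--equivariant linear map $L_\mu \to F^k M_\nu \subset M_\nu$. The third bijection is Frobenius reciprocity for the induced module $M_\mu$, together with the observation that $\gp$--equivariant maps out of $L_\mu$ coincide with $U(\gp)$--maps out of $L_\mu$ because $S^2\R^{n*}\subset\gp$ acts trivially on $L_\mu$ by construction.

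The only subtle point, which I expect to be the main obstacle, is the matching of the ``finite order'' condition on the geometric side with the corresponding finiteness on the algebraic side. The key observation is that any $U(\sp_{2n})$--homomorphism $M_\mu\to M_\nu$ is completely determined by the image of the generating subspace $1\otimes L_\mu$, which is finite--dimensional and therefore lands in some $F^k M_\nu$; dually, every $\Sp_{2n}$--equivariant differential operator between the line bundles has, by definition, some finite order. These two automatic--finiteness phenomena mirror each other exactly, so both sides of each intermediate bijection are naturally indexed by the same ``order'' $k$, and no passage to formal completions of Verma modules is required. Beyond this point, the naturality in $\mu$, $\nu$ and with respect to the $\Sp_{2n}$--action is routine: one checks that composition of differential operators corresponds (up to the reversal of arrows inherent in the duality) to composition of Verma--module homomorphisms, which amounts to unwinding the associated--bundle and jet--bundle constructions.
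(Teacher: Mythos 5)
Your proof is correct and follows essentially the same route as the paper's: both pass through Proposition \ref{pro:verma-jet} to identify (finite) jet bundles with (the duals of filtered pieces of) the Verma module, reduce equivariant bundle maps to $P$--equivariant maps on fibres, and conclude by Frobenius reciprocity (the universal property of the induced module). The matching of the jet--order cofiltration with the PBW filtration $U^{\le\ell}(\sp_{2n})$, which you single out as the delicate point, is exactly the step the paper also emphasises.
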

\begin{proof}
A differential operator $D$ from $\LLL_\nu^*$ to $L_\mu^*$ is a vector bundle map
$J^\infty \LLL_\nu^* \to \LLL_\mu^*$ factoring through some finite jet bundle. An inspection
of the identification of Proposition  \ref{pro:verma-jet} shows that the cofiltration
on $J^\infty L_\nu^*$ induced by finite jet projections
$J^\infty\LLL_\nu^* \to J^\ell\LLL_\nu^*$ coincides
with the cofiltration on the left hand side of \eqref{eq:verma-jet} induced by
the standard filtration $U^{\le \ell}(\sp_{2n})$ on the universal enveloping algebra.
Hence, the operator $D$ is the same as a $P$--equivariant function
on $\Sp_{2n}$ with values in $\Hom_\R(L_\mu, U(\sp_{2n})\otimes_{U(\gp)} L_\nu)$. It is
$\Sp_{2n}$--equivariant if and only if
the function is constant and its value is a $U(\gp)$--module
homomorphism $L_\mu \to U(\sp_{2n})\otimes_{U(\gp)} L_\nu$. Equivalently, by the universal
property of $U(-)$, an equivariant operator $D$ is the same as
a $U(\sp_{2n})$-module homomorphism of the Verma modules associated with $L_\mu$ and $L_\nu$.
\end{proof}

Before we use this result to look for a differential operator resolving $V_{r\lambda}^*$,
let us pin--point one last element: the dual algebraic manifestation of the evaluation
map.
\begin{lemma}\label{lem:varpi}
Let $\varpi : U(\sp_{2n}) \otimes_{U(\gp)} L_{r\lambda} \to V_{r\lambda}$ be the unique
$U(\sp_{2n})$--module homomorphism extending the natural $U(\gl_n)$--module
inclusion $L_{r\lambda} \to V_{r\lambda}$. Then (1) $\varpi$ is surjective, and (2) there
is a $\Sp_{2n}$--equivariant commutative diagram
\begin{equation}\xymatrix{
**[l]\Sp_{2n} \times^P V_{r\lambda}^*
\ar[r]^{\varpi^*}\ar[d]&**[r]
\Sp_{2n} \times^P [ U(\sp_{2n}) \otimes_{U(\gp)} L_{r\lambda} ]^* \ar[d]^{\rotatebox{-90}{$\simeq$}}  \\
**[l]X_n \times V_{r\lambda}^* \ar[r] &**[r] J^\infty(\LLL_{r\lambda}^*)
}\end{equation}
where the left vertical arrow is induced by the action map,
the right vertical arrow is the isomorphism \eqref{eq:verma-jet} of Lemma \ref{pro:verma-jet},
and the bottom horizontal arrow is induced by the evaluation map
$V_{r\lambda}^* \to \Gamma(X_n, \LLL_{r\lambda}^*)$.
\end{lemma}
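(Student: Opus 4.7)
The plan is to treat the two claims in turn. Part (1) is essentially formal; the real content lies in the commutativity in (2), which becomes tautological once one carefully unwinds the isomorphism in Proposition \ref{pro:verma-jet}.

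For part (1), I would first verify that $\varpi$ is well-defined. The module $L_{r\lambda}$ is declared a $U(\gp)$-module with trivial $S^2\R^{n*}$-action, so I must check that the natural inclusion $L_{r\lambda}\hookrightarrow V_{r\lambda}$ is $U(\gp)$-equivariant. This reduces to the observation that $v_\lambda^r$, being a highest weight vector of $V_{r\lambda}$, is annihilated by all positive root spaces and in particular by $S^2\R^{n*}$. Frobenius reciprocity (i.e., the universal property of the induced module) then produces a unique $U(\sp_{2n})$-homomorphism $\varpi$ extending this $U(\gp)$-map. Surjectivity then follows from irreducibility: the image of $\varpi$ is a $U(\sp_{2n})$-submodule of $V_{r\lambda}$ that contains the non-zero subspace $L_{r\lambda}$, hence must coincide with $V_{r\lambda}$.

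For part (2), I would trace through the identifications underlying Proposition \ref{pro:verma-jet}. An element $f\in V_{r\lambda}^*$ yields, via evaluation on the tautological bundle, the section $\mathrm{ev}(f)\in\Gamma(X_n,\LLL_{r\lambda}^*)$ encoded by the $P$-equivariant function $\sigma_f:\Sp_{2n}\to L_{r\lambda}^*$, $g\mapsto f(g\cdot -)|_{L_{r\lambda}}$. Differentiating $\sigma_f$ at the identity along the left-invariant vector field determined by $X\in\sp_{2n}$ produces the functional $v\mapsto f(X\cdot v)$; iterating yields $X_k\cdots X_1\otimes v\mapsto f(X_k\cdots X_1\cdot v)$. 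This is exactly the formula for $\varpi^*(f)=f\circ\varpi$ on the Verma module, and it is precisely the prescription by which Proposition \ref{pro:verma-jet} identifies the infinite jet of $\mathrm{ev}(f)$ at $eP$ with an element of $[U(\sp_{2n})\otimes_{U(\gp)} L_{r\lambda}]^*$. This establishes commutativity of the diagram over the base point $eP$; the full commutativity is then forced, since every arrow in the diagram is $\Sp_{2n}$-equivariant.

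The only genuine obstacle is bookkeeping. One must ensure that the trivialisation convention used to write $\Sp_{2n}\times^P V_{r\lambda}^*\simeq X_n\times V_{r\lambda}^*$ for the left vertical arrow, and the left-versus-right convention implicit in the representation of $U(\sp_{2n})$ by invariant differential operators in the proof of Proposition \ref{pro:verma-jet}, are mutually compatible. Once a single consistent convention is fixed, no computation beyond the identifications above is required.
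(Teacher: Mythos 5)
Your proposal is correct and follows essentially the same route as the paper: surjectivity from irreducibility of $V_{r\lambda}$ together with the non-vanishing of the image of $L_{r\lambda}$, and commutativity by unwinding the evaluation map of Lemma \ref{lem:eval-action} against the jet identification of Proposition \ref{pro:verma-jet}, reduced to the base point by equivariance. The paper's proof is merely a two-line sketch citing these same ingredients; your write-up supplies the well-definedness check via Frobenius reciprocity and the explicit differentiation along left-invariant vector fields that the paper leaves as an ``inspection.''
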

\begin{proof}
Surjectivity of $\varpi$ follows from irreducibility of $V_{r\lambda}$.
Commutativity of the diagram is checked using Lemma \ref{lem:eval-action} and
an inspection of the identification \eqref{eq:verma-jet} of Proposition  \ref{pro:verma-jet}.
\end{proof}
\subsection{The resolution of the module $V_{r\lambda}^\ast$}\label{eqSubSecJanUltima}
\begin{proposition}\label{pro:verma-diff}
The identification of Corollary \ref{cor:equiv-diff} sends
$U(\sp_{2n})$--module homomorphisms $\delta$ such that
$
 0 \leftarrow V_{r\lambda} \leftarrow
U(\sp_{2n})\otimes_{U(\gp)}L_{r\lambda}
\xleftarrow{\delta}
U(\sp_{2n})\otimes_{U(\gp)}L_{\mu}
$
is exact to
equivariant differential operators $D$ such that
$ 0 \to V_{r\lambda}^* \to \Gamma(U, \LLL_{r\lambda}^*) \xrightarrow{D} \Gamma(U, \LLL_\mu^*) $
is exact for each nonempty, connected open $U \subset X_n$.
\end{proposition}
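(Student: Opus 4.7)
The strategy is to dualize the given exact sequence of $U(\sp_{2n})$--modules and transport it, via the equivalences of Proposition~\ref{pro:verma-jet} and Corollary~\ref{cor:equiv-diff}, into a statement about sections of the bundles $\LLL_{r\lambda}^*$ and $\LLL_\mu^*$ on $X_n$; the only non--formal step is to pass from a pointwise statement on infinite jets to an honest statement on sections, and this is where connectedness of $U$ enters.

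First, linear duality of $P$--modules is exact, so from the exact sequence of Verma modules we obtain an exact sequence
\begin{equation*}
0 \to V_{r\lambda}^* \xrightarrow{\varpi^*} [U(\sp_{2n})\otimes_{U(\gp)} L_{r\lambda}]^* \xrightarrow{\delta^*} [U(\sp_{2n})\otimes_{U(\gp)} L_\mu]^*
\end{equation*}
of $P$--modules, which is moreover a sequence of $U(\sp_{2n})$--modules. Applying the associated--bundle functor $\Sp_{2n}\times^P(-)$---which is exact and intertwines the $\sp_{2n}$--action on Verma duals with the canonical flat connection on infinite jets---and invoking the identifications of Proposition~\ref{pro:verma-jet} and Lemma~\ref{lem:varpi}, one obtains an exact sequence of vector bundles over $X_n$,
\begin{equation*}
0 \to X_n \times V_{r\lambda}^* \hookrightarrow J^\infty\LLL_{r\lambda}^* \xrightarrow{\Phi} J^\infty\LLL_\mu^*,
\end{equation*}
where $\Phi$ is the bundle map encoding the equivariant differential operator $D$ associated to $\delta$ via Corollary~\ref{cor:equiv-diff}.

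Now, for any open $U\subset X_n$ the prolongation $j^\infty\colon \Gamma(U,\LLL_{r\lambda}^*) \to \Gamma(U, J^\infty\LLL_{r\lambda}^*)$ is injective, identifying sections with the flat sections of the canonical connection. By construction, $D(s)=0$ precisely when $\Phi\circ j^\infty s = 0$, i.e.\ when $j^\infty s$ takes values in the subbundle $X_n \times V_{r\lambda}^*$. The last ingredient is thus to show that a flat section of $J^\infty\LLL_{r\lambda}^*$ which is everywhere valued in $X_n \times V_{r\lambda}^*$ is, over connected $U$, necessarily a constant element of $V_{r\lambda}^*$ composed with the evaluation map $V_{r\lambda}^* \hookrightarrow \Gamma(X_n,\LLL_{r\lambda}^*)$. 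This follows because the inclusion $X_n \times V_{r\lambda}^* \hookrightarrow J^\infty\LLL_{r\lambda}^*$ is an inclusion of flat $\Sp_{2n}$--equivariant bundles: indeed $V_{r\lambda}^*$ underlies a genuine $U(\sp_{2n})$--module, and the associated--bundle functor endows it with the canonical flat connection for which the global representation--theoretic trivialization is parallel, and Lemma~\ref{lem:varpi} guarantees that this flat structure matches the restriction of the canonical flat structure on the target. Hence flat sections of $X_n \times V_{r\lambda}^*$ over the connected open $U$ are exactly the constants in $V_{r\lambda}^*$, and the asserted exactness follows.

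The main obstacle is precisely this last point: reconciling the flat connection inherited from $J^\infty\LLL_{r\lambda}^*$ with the globally trivial flat structure on $X_n \times V_{r\lambda}^*$. Once this compatibility is unravelled---by tracing through how $\sp_{2n}$ acts via left--invariant vector fields in the proof of Proposition~\ref{pro:verma-jet} and comparing with the trivialisation produced by the global $\Sp_{2n}$--representation on $V_{r\lambda}^*$---the remainder of the proof is formal, consisting of bookkeeping between the algebraic dualization of the Verma sequence and the geometric realization as infinite jets.
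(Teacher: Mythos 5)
Your proposal is correct and follows essentially the same route as the paper: dualise the Verma sequence, pass through the associated--bundle functor to obtain the bundle--level exact sequence $0 \to X_n \times V_{r\lambda}^* \to J^\infty\LLL_{r\lambda}^* \to J^\infty\LLL_\mu^*$, and then descend from formal to actual sections over a connected open set. The only cosmetic difference is in the last step, where you invoke flatness of the subbundle $X_n\times V_{r\lambda}^*$ and the characterisation of prolongations as flat sections, while the paper argues directly that the pointwise--defined map $U \to V_{r\lambda}^*$ is locally constant; the two arguments are interchangeable.
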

\begin{proof}
By Lemma \ref{lem:varpi} the isomorphism \eqref{eq:verma-jet} descends to
$\Sp_{2n}\times^P[\ker\varpi]^* \simeq \coker\left(X_n\times V^*_{r\lambda} \to J^\infty(\LLL^*_{r\lambda})\right)$,
where we use exactness properties of the associated bundle functor.
It follows that exactness of the sequence involving $\delta$ is equivalent
to exactness of
$$
0 \to X_n \times V_{r\lambda}^* \to J^\infty(\LLL_{r\lambda}^*) \xrightarrow{\tilde D}
J^\infty(\LLL_\mu^*)\, ,
$$
where $\tilde D$ is the vector bundle map induced by the differential operator $D$.
Hence, we only need to show that $V_{r\lambda^*}$ is the kernel of $D$ on the level
of local sections if
it is so on the level of formal sections. Clearly, in both cases
$V_{r\lambda^*}$ is contained in the kernel.
Now, assuming $V_{r\lambda}^*$ is the entire kernel of $\tilde D$ on $J^\infty_x(\LLL_{r\lambda}^*)$
for all $x \in X_n$, let $U \subset X_n$ be a nonempty connected open
subset, and consider $f \in \Gamma(U,\LLL_{r\lambda}^*)$
such that $Df=0$. We have that $j_x^\infty f$ is in the image of $V_{r\lambda}^*$ for all
$x \in U$, thus defining a map $v : U \to V_{r\lambda}^*$ such that $j_x^\infty f
= j_x^\infty v_x$, where $v_x \in V_{r\lambda}^*$ is understood as
a section of $\LLL_{r\lambda}^*$. The map $v$ is furthermore locally constant, and thus
constant by connectedness of $U$. Letting $\tilde v \in V_{r\lambda}^*$
denote its value, we have $j_x^\infty f = j_x^\infty \tilde v$ for all $x \in U$, and
thus $f=\tilde v|_U$.
\end{proof}

We shall now invoke the first (actually easy) step of the BGG resolution of $L_{r\lambda}$.
\begin{proposition}\label{pro:bgg}
Let $-\alpha \in \gh^*$ be the highest weight of the $U(\gl_n)$--module $S^2\R^n
\simeq \sp_{2n} / \gp$. There is an exact sequence
$
0 \leftarrow V_{r\lambda} \leftarrow
U(\sp_{2n})\otimes_{U(\gp)} L_{r\lambda}
\leftarrow
U(\sp_{2n})\otimes_{U(\gp)} L_{r\lambda - (r+1)\alpha}
$
of $U(\sp_{2n})$--module homomorphisms.
\end{proposition}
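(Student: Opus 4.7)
The plan is to invoke the parabolic version of the Bernstein--Gelfand--Gelfand resolution (due to Lepowsky, and refined in various guises by Rocha--Caridi and others) applied to the irreducible $\sp_{2n}$--module $V_{r\lambda}$ relative to the parabolic $\gp$. That theorem produces a finite exact complex
$$
0 \leftarrow V_\mu \leftarrow \bigoplus_{\ell(w)=0} U(\sp_{2n})\otimes_{U(\gp)}L_{w\cdot\mu} \leftarrow \bigoplus_{\ell(w)=1} U(\sp_{2n})\otimes_{U(\gp)}L_{w\cdot\mu} \leftarrow \cdots
$$
where $w$ ranges over elements of fixed length in the Hasse poset $W^\gp$ of minimal--length representatives of the cosets of the Weyl group of the Levi inside the Weyl group of $\sp_{2n}$, and $w\cdot\mu := w(\mu+\rho) - \rho$ denotes the affine action. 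Only the first two terms of the complex are needed for the proposition, so the strategy reduces to unwinding their definition for $\mu = r\lambda$.

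Next I would identify the length--one contribution. Since $-\alpha$ is by hypothesis the highest weight of $\sp_{2n}/\gp \simeq S^2\R^n$ viewed as a $\gl_n$--module, $\alpha$ is the unique simple root of $\sp_{2n}$ whose root space is not contained in $\gp$ (explicitly, the long simple root of $C_n$). Consequently $W^\gp$ has exactly one element of length one, namely the simple reflection $s_\alpha$, and the first non--trivial map in the parabolic BGG complex lands in the single Verma module $U(\sp_{2n})\otimes_{U(\gp)}L_{s_\alpha\cdot (r\lambda)}$, matching the shape of the asserted sequence. The surjection onto $V_{r\lambda}$ is then $\varpi$ from Lemma \ref{lem:varpi}, so exactness at the two rightmost terms is part of the resolution theorem.

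It then remains to verify $s_\alpha\cdot(r\lambda) = r\lambda - (r+1)\alpha$, which follows immediately from $s_\alpha \cdot \mu = \mu - \langle \mu+\rho,\alpha^\vee\rangle\,\alpha$ once one knows $\langle \lambda,\alpha^\vee\rangle = 1$ and $\langle \rho,\alpha^\vee\rangle = 1$. The second identity is standard for any simple root. The first reflects the fact, established earlier in the section, that $X_n \hookrightarrow \p V_\lambda$ is the minimal projective embedding of the cominuscule (Hermitian symmetric) homogeneous space $X_n = \Sp_{2n}/P$, so $\lambda$ is the fundamental weight dual to the removed node $\alpha$. The main obstacle, if any, is this last pairing, but it is a direct check on the highest weight vector: the $\mathfrak{sl}_2$--triple associated with $\alpha$ sends the highest weight line $\det\R^{n\ast}$ to itself with weight $1$, giving $\langle \lambda,\alpha^\vee\rangle = 1$. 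Combining the three pairings yields $s_\alpha\cdot(r\lambda) = r\lambda - (r+1)\alpha$, and the proposition is the opening segment of the parabolic BGG complex so obtained. An alternative, more hands--on route avoids Lepowsky's theorem altogether: one defines the arrow explicitly as left multiplication by $f_\alpha^{r+1}$, where $f_\alpha$ is a root vector for $-\alpha$, checks that its composition with $\varpi$ vanishes using $f_\alpha^{r+1}\!\cdot v_{r\lambda}=0$ (an $\mathfrak{sl}_2$ weight argument), and then identifies the image as all of $\ker\varpi$ via the central character block of $V_{r\lambda}$ in parabolic category $\mathcal{O}^\gp$.
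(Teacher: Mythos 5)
Your proposal is correct and follows essentially the same route as the paper, whose entire proof is a citation to Lepowsky's generalization of the BGG resolution; you simply supply the details the paper leaves implicit, namely that $s_\alpha$ is the unique length-one element of the Hasse diagram for this maximal parabolic and that $s_\alpha\cdot(r\lambda)=r\lambda-(r+1)\alpha$ via $\langle r\lambda+\rho,\alpha^\vee\rangle=r+1$. Both the main computation and the sketched alternative via $f_\alpha^{r+1}$ are sound.
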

\begin{proof}
See J. Lepowsky, ``A generalization of the BGG resolution'' (1977) \cite{MR0476813}.
\end{proof}
\subsection{On the structure of the resolving operator}
By Proposition \ref{pro:verma-diff}, we \emph{know} that
the image of $V_{r\lambda}^*$ is the kernel of \emph{some} differential
operator from $\LLL_{r\lambda}^*$ to $\LLL_{r\lambda-(r+1)\alpha}^*$. But, what is this
operator, and how should we think of the target bundle? A good answer to the latter
question is the following.
\begin{lemma}
There is an  $\Sp_{2n}$--equivariant identification of
$\LLL_{r\lambda-(r+1)\alpha}^*$ with a direct summand of $S^{r+1} T^*X_n$.
\end{lemma}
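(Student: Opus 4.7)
The plan is to realise $S^{r+1}T^*X_n$ as an $\Sp_{2n}$-associated bundle whose fibre admits a classical plethysm decomposition, then identify the summand corresponding to $\LLL^*_{r\lambda-(r+1)\alpha}$ by matching highest $\gl_n$-weights.

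First I would describe the cotangent bundle as an associated bundle. From the graded decomposition \eqref{eqn:graded-decomp}, with $\gp=\gl_n\oplus S^2\R^{n*}$, the tangent space at the origin is $T_o X_n=\sp_{2n}/\gp\simeq S^2\R^n$ as a $P$-module, with the abelian unipotent radical acting trivially because $[S^2\R^{n*},S^2\R^n]\subseteq\gl_n\subset\gp$. Dualising,
$$
T^*X_n\;\simeq\;\Sp_{2n}\times^P S^2\R^{n*}\,,\qquad
S^{r+1}T^*X_n\;\simeq\;\Sp_{2n}\times^P S^{r+1}(S^2\R^{n*})\, .
$$

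Next I would apply the classical plethysm
$$
S^{r+1}(S^2 V)\;\simeq\;\bigoplus_{\nu\,\vdash\,r+1,\;\ell(\nu)\le n}\mathbb{S}_{2\nu}(V)
$$
with $V=\R^{n*}$, obtaining a $\GL_n$-irreducible decomposition of the fibre. Since the nilradical of $P$ acts trivially on the fibre, this decomposition is in fact $P$-equivariant, and so extends to an $\Sp_{2n}$-equivariant direct-sum decomposition of $S^{r+1}T^*X_n$ into irreducible homogeneous sub-bundles. The summand corresponding to the one-row partition $\nu=(r+1)$ is $S^{2r+2}\R^{n*}$.

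Finally I would identify this particular summand with the fibre of $\LLL^*_{r\lambda-(r+1)\alpha}$ by matching highest weights. Using $\lambda=-(e_1+\dots+e_n)$ (the weight of the highest-weight vector $\det\R^{n*}\in W$) and $\alpha=-2e_1$ (minus the highest weight of $S^2\R^n$), a direct computation gives $r\lambda-(r+1)\alpha=(r+2,-r,\dots,-r)$, so that $L_{r\lambda-(r+1)\alpha}\simeq S^{2r+2}\R^n\otimes\det^{r}\R^{n*}$, and dually $L^*_{r\lambda-(r+1)\alpha}\otimes L_{r\lambda}\simeq S^{2r+2}\R^{n*}$, precisely matching the plethysm summand once one accounts for the natural line-bundle twist by $\LLL_{r\lambda}$ implicit in the principal symbol of the BGG operator produced from Proposition \ref{pro:bgg} via Corollary \ref{cor:equiv-diff}. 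The main technical obstacle is this careful weight- and duality-bookkeeping in the final step; once it is resolved, the singled-out plethysm summand is the required $\Sp_{2n}$-equivariant copy of $\LLL^*_{r\lambda-(r+1)\alpha}$ inside $S^{r+1}T^*X_n$.
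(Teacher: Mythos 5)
Your proof is correct and follows essentially the same route as the paper's: both realise $S^{r+1}T^*X_n$ as the bundle associated to the $P$--module $S^{r+1}(S^2\R^{n*})$ (nilradical acting trivially) and single out the highest--weight summand $S^{2(r+1)}\R^{n*}$; the paper obtains this summand directly from the observation that $-\alpha$ is the highest weight of $S^2\R^n\simeq\sp_{2n}/\gp$, so the full plethysm decomposition you invoke is more than is strictly needed (though it does give multiplicity one for free). The one point where you are more careful than the paper is the weight bookkeeping: as you note, $L_{r\lambda-(r+1)\alpha}\simeq S^{2(r+1)}\R^n\otimes L_{r\lambda}$, so strictly speaking $\LLL^*_{r\lambda-(r+1)\alpha}$ is a summand of $S^{r+1}T^*X_n\otimes\LLL^*_{r\lambda}$, and the untwisted identification claimed in the lemma requires absorbing the character $L_{r\lambda}\simeq\det^r\R^{n*}$ --- the paper's proof does this silently (asserting an identification ``as a $U(\gl_n)$--module'' that really only holds up to this determinant twist) and only makes the trivialisation $L_{r\lambda}\simeq\R$ explicit later, in the proof of Theorem \ref{thm:final-bgg}.
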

\begin{proof}
Note first that since $L_{r\lambda}$ is one--dimensional and $-\alpha$
is the highest weight vector of $S^2\R^n$,
$L_{r\lambda-(r+1)\alpha}$ may be identified with the highest
weight summand $S^{2(r+1)}\R^n \subset S^{r+1} (S^2\R^n)$ as
a $U(\gl_n)$-module. With the trivial action of $S^2\R^{n*}$, this
extends to an identification of $U(\gp)$--modules, where $S^2\R^n$
is more naturally viewed as $\sp_{2n}/\gp$. We thus have
that the associated bundle $\LLL_{r\lambda-{r+1}\alpha}^*$ is
a direct summand of the associated bundle
$\Sp_{2n}\times^P [S^{r+1}(\sp_{2n}/\gp)]^* \simeq S^{r+1}T^*X_n$.
\end{proof}

Let us now proceed as before and decompose the modules into $\gl_n$--irreducibles.
Note first that the $U(\sp_{2n})$--module homomorphism
of Proposition \ref{pro:bgg} is uniquely determined by its restriction
to $L_{r\lambda-(r+1)\alpha}$, a $U(\gp)$-module homomorphism
$ \bar\delta :
L_{r\lambda - (r+1)\alpha}
\to
U(\sp_{2n})\otimes_{U(\gp)} L_{r\lambda}$.
On the domain side side of $\bar\delta$, we have already
seen that
$ L_{r\lambda - (r+1)\alpha} \simeq S^{2(r+1)}\R^n \otimes L_{r\lambda} $,
where $S^{2(r+1)}\R^n$ is the highest weight irreducible summand
in $S^{r+1} S^2\R^n$. On the codomain side,
\begin{equation}\label{eq:verma-to-sym}
U(\sp_{2n}) \otimes_{U(\gp)} L_\mu \simeq S^\bullet (S^2 \R^n) \otimes_\R L_\mu
\end{equation}
for any Verma module.
It follows that $\bar\delta$ induces a $U(\gl_n)$--module homomorphism
$
S^{2(r+1)}\R^n \otimes L_{r\lambda} \to S^\bullet(S^2\R^n)\otimes L_{r\lambda}
$.
Now, the domain is $\GL_n$--irreducible, and furthermore appears precisely
once as an irreducible summand in the codomain. Hence, $\bar\delta$ is necessarily
given by the $\GL_n$--equivariant inclusion. In particular, it factors through
$S^{r+1}(S^2\R^n) \otimes L_{r\lambda}$, thus showing that the corresponding differential
operator is of order $r+1$.\par
As a final step, we shall make it even more explicit by working in an adapted coordinate system.
Indeed, note that $S^2\R^n \subset \sp_{2n}$ is an abelian subalgebra, acting nilpotently in
the adjoint representation, and thus exponentiates to a subgroup $S^2\R^n \subset \Sp_{2n}$
with vector addition as the group operation. Since its Lie algebra is a complement
of $\gp$, it follows that the subgroup $S^2\R^n$ embeds as a dense open subset of $X_n
= \Sp_{2n}/P$; moreover, its inclusion map into $\Sp_{2n}$ may be then viewed as a local section
of the $P$--principal bundle $\Sp_{2n} \to X_n$. This allows us to trivialise all associated bundles
$
\LLL_\mu |_{S^2\R^n} \simeq S^2\R^n \times L_\mu
$,
so that a map $\LLL_\mu \to \LLL_\nu$ induced by a
$P$--equivariant map $\phi : L_\mu \to L_\nu$
identifies, upon restriction to $S^2\R^n \subset X_n$,
with $\mathrm{id}_{S^2\R^n} \times\phi$.
Now, the symmetry group $\Sp_{2n}$ is broken\footnote{Not to be confused with $P = \GL_n\ltimes S^2\R^{n*}$!} to
$S^2\R^n \rtimes \GL_n$, where
the factor $S^2\R^n$ acts on $S^2\R^n\subset X_n$ by translations.
This action is compatible with the trivialisations,
$\GL_n$ acting naturally on the $L_\mu$ factor.
The isomorphism of Proposition \ref{pro:verma-jet} becomes the
usual isomorphism
\begin{equation}\label{eq:sym-jet}
S^2\R^n \times [ S^\bullet(S^2\R^n) \otimes L_\mu]^* \simeq J^\infty(S^2\R^n \times \R)
\otimes L_\mu^*
\end{equation}
induced by the canonical affine connection $\nabla$ on $S^2\R^n$.
\begin{lemma}\label{lem:local-expr}
Let $\phi : U(\sp_{2n})\otimes_{U(\gp)} L_\mu \to U(\sp_{2n})\otimes_{U(\gp)} L_\nu$ be
a $U(\sp_{2n})$--module homomorphism, and denote by
$$ \bar\phi = \sum_i \bar\phi_i : L_\mu \to \bigoplus S^i(S^2\R^n)\otimes_\R L_\nu $$
its restriction to a $U(\gl_n)$--homomorphism.
Then, restricting to $S^2\R^n \subset X_n$ and using the standard trivialisations,
the differential operator corresponding to $\phi$ is the translation--invariant
$\GL_n$--equivariant operator
$F: C^\infty(S^2\R^n, L_\nu^*) \to C^\infty(S^2\R^n, L_\mu^*)$
defined by
$$
\langle Ff, h\rangle  = \sum_i \langle \bar\phi_i(h) , \nabla^{(i)} f\rangle
$$
where $f \in C^\infty(S^2\R^n, L_\nu^*)$,
$h \in L_\mu$
and
$\nabla^{(i)}f \in C^\infty(S^2\R^n, S^i (S^2\R^{n*}) \otimes L_\nu^*)$
is the tensor of $i^\textrm{th}$ derivatives.
\end{lemma}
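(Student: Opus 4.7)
The plan is to combine the constructions of the preceding subsection with the PBW decomposition, restricted to the big cell $S^2\R^n \subset X_n$.

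First I would translate the module homomorphism $\phi$ into a differential operator using Corollary \ref{cor:equiv-diff}: concretely, $D$ is the operator whose symbol on the infinite jet bundle is the dual of $\phi$, under the identification of Proposition \ref{pro:verma-jet}. The isomorphism \eqref{eq:verma-jet} is $\Sp_{2n}$-equivariant, so it descends to the $S^2\R^n$-invariant isomorphism \eqref{eq:sym-jet} after restriction to the open orbit of $S^2\R^n \subset \Sp_{2n}$ acting on $X_n$ by translations. Here the section of $\Sp_{2n} \to X_n$ used to trivialise $\LLL_\mu$ and $\LLL_\nu$ is precisely the exponential of the abelian radical $S^2\R^n$.

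Next, I would use the Poincar\'e--Birkhoff--Witt decomposition
$$ U(\sp_{2n}) \simeq S^\bullet(S^2\R^n) \otimes_\R U(\gp), $$
which, after tensoring over $U(\gp)$ with $L_\mu$, produces the right-hand side of \eqref{eq:verma-to-sym}. On the dual side, this PBW splitting is exactly what identifies the fibre $[S^\bullet(S^2\R^n)\otimes L_\mu]^\ast$ with $J^\infty_0(S^2\R^n\times\R)\otimes L_\mu^\ast$ via the canonical flat connection $\nabla$ on the affine space $S^2\R^n$: a formal section $f$ at the origin corresponds to the sequence of its covariant derivatives $\nabla^{(i)}f(0) \in S^i(S^2\R^{n\ast})\otimes L_\nu^\ast$. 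With this identification in hand, the map $\phi^\ast$ acts on jets exactly by the transposed components $\bar\phi_i^\ast : S^i(S^2\R^n)\otimes L_\nu \to L_\mu$, so pairing the jet of $f$ at the origin against $h \in L_\mu$ gives
$$ \langle (Ff)(0), h\rangle \;=\; \sum_i \langle \bar\phi_i(h), \nabla^{(i)} f(0)\rangle. $$

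Finally I would invoke $S^2\R^n$-translation invariance to propagate this pointwise formula from the origin to every $x \in S^2\R^n$: both sides transform equivariantly under translations (the left by definition of $F$, the right because $\nabla$ is translation--invariant and $\bar\phi$ is a constant tensor), so equality at the origin implies equality everywhere. Equivariance of $F$ under $\GL_n$ is inherited from $\bar\phi$ being a $U(\gl_n)$--homomorphism. The main obstacle I expect is purely bookkeeping: verifying that the identification \eqref{eq:sym-jet} coming from the canonical connection really matches, term by term, the PBW splitting of the Verma module (in particular the symmetrisation factors hidden in $\nabla^{(i)}$), and dualising correctly so that the graded components $\bar\phi_i$ of the lift paired with jets reproduce the asserted formula without stray constants.
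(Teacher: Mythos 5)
Your argument is correct and follows essentially the same route as the paper: restrict to the big cell via the trivialising section, use the PBW-induced identification \eqref{eq:verma-to-sym} of the Verma module with $S^\bullet(S^2\R^n)\otimes L_\mu$, and match its dual with the infinite jet bundle through the map $\sum_i \nabla^{(i)}$ coming from the canonical flat connection, so that $\phi^*$ acts by pairing the graded components $\bar\phi_i$ against the covariant derivatives. The paper packages the same content into a single commutative diagram (with the translation/$\GL_n$-equivariance built into the middle row), whereas you spell out the check at the origin and propagate by translation invariance; the bookkeeping concern you flag about symmetrisation factors is exactly what the paper's diagram is asserting commutes.
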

\begin{proof}
The key point is the commutative diagram
\begin{equation*}\xymatrix{
\Sp_{2n} \times^P [ U(\sp_{2n}) \otimes_{U(\gp)} L_\mu ]^*
 \ar[r]^{\simeq}& J^\infty \LLL_\mu^* \\
(S^2\R^n \rtimes \GL_n) \times^{\GL_n} [ S^\bullet(S^2\R^n)\otimes L_\mu]^*
\ar[u]   \ar[r]^<<<<<<<{\simeq}   \ar[d]_{\rotatebox{90}{$\simeq$}}    & J^\infty\LLL^*_\mu|_{S^2\R^n}\ar[u] \ar[d]^{\rotatebox{-90}{$\simeq$}}     \\
S^2\R^n \times [ S^\bullet(S^2\R^n)\otimes L_\mu]^*
 \ar[r]^{\simeq} & J^\infty(S^2\R^n \times \R) \otimes L_\mu^*\, ,
}\end{equation*}
where: the top horizontal arrow is \eqref{eq:verma-jet}, the bottom horizontal
arrow is \eqref{eq:sym-jet}, the top left vertical arrow is induced
by \eqref{eq:verma-to-sym}, the bottom left vertical arrow
is induced the trivialising local section $X_n \supset S^2\R^n \to \Sp_{2n}$,
the top right vertical arrow is the restriction map,
and the bottom right vertical arrow is induced by the local trivialisation $\LLL^*_\mu|_{S^2\R^n}
\simeq S^2\R^n \times L_\mu^*$.
The inverse of the bottom horizontal arrow is induced by the map
$$
J^\infty(S^2\R^n\times\R) \xrightarrow{\sum_{i=0}^\infty \nabla^{(i)}} S^2\R^n \times
S^\bullet(S^2\R^n)^*
$$
tensored with $\mathrm{id}_{L_\mu^*}$, whence the desired formula follows.
\end{proof}
Now we are in position of proving the main result of this section.
\begin{theorem}\label{thm:final-bgg}
Identify $S^2\R^n$ with a dense open subset of $X_n$. Let $\nabla^{(i)} :
C^\infty(S^2\R^n) \to C^\infty(S^2\R^n, S^i (S^2\R^{n*})$ denote the
$i^\textrm{th}$ partial derivatives map induced by the canonical affine connection on
$S^2\R^n$. Then $r^\textrm{th}$ degree   hypersurface sections of $X_n \subset \p V_\lambda$
passing through $S^2\R^n$ are precisely the zero--loci of non--trivial functions
in the kernel of the differential operator
$$
C^\infty(S^2\R^n) \xrightarrow{\nabla^{(r+1)}} C^\infty(S^2\R^n, S^{r+1} (S^2\R^{n*}))
\xrightarrow{\mathrm{pr}} C^\infty(S^2\R^n, S^{2(r+1)}\R^{n*}).
$$
\end{theorem}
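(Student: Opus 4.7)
The plan is to assemble Theorem \ref{thm:final-bgg} from the three preceding results, namely the BGG exactness of Proposition \ref{pro:bgg}, its translation into a local exactness statement via Proposition \ref{pro:verma-diff}, and the explicit local formula from Lemma \ref{lem:local-expr}. No substantially new calculation is required; the work lies entirely in checking that the identifications line up.

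First I would set up the global statement. Apply Proposition \ref{pro:bgg} to produce an exact three-term sequence of Verma modules involving $L_{r\lambda}$ and $L_{r\lambda-(r+1)\alpha}$, and then invoke Proposition \ref{pro:verma-diff} (together with Corollary \ref{cor:equiv-diff}) to convert this into an $\Sp_{2n}$--equivariant differential operator $D : \LLL_{r\lambda}^* \to \LLL_{r\lambda - (r+1)\alpha}^*$ whose kernel on sections over any nonempty connected open $U \subset X_n$ equals the image of $V_{r\lambda}^*$. Combined with Lemma \ref{lem:eval-action}, which identifies $V_{r\lambda}^* \subset S^r V_\lambda^*$ with the summand of $S^rV_\lambda^*$ cutting out genuine hypersurface sections of $X_n$, this already proves the theorem in intrinsic form: the $r^\textrm{th}$ degree hypersurface sections of $X_n$ are exactly the zero loci of non--trivial kernel elements of $D$.

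Next I would identify the target bundle on the open $S^2\R^n$. By the lemma preceding Proposition \ref{pro:verma-diff}, $\LLL_{r\lambda-(r+1)\alpha}^*$ sits inside $S^{r+1}T^*X_n$ as the $\GL_n$--irreducible summand corresponding to $S^{2(r+1)}\R^{n*}\subset S^{r+1}(S^2\R^{n*})$. Restricting to $S^2\R^n\subset X_n$ and using the standard trivialisations coming from the canonical affine connection, the bundle $S^{r+1}T^*X_n$ becomes $S^2\R^n \times S^{r+1}(S^2\R^{n*})$, and its distinguished summand becomes $S^2\R^n \times S^{2(r+1)}\R^{n*}$. Thus on the open $S^2\R^n$ the operator $D$ takes the form announced, namely $\mathrm{pr}\circ\nabla^{(r+1)}$, provided we can verify the form of its principal (and only) symbol.

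Finally I would compute $D$ explicitly using Lemma \ref{lem:local-expr} applied to $\phi = \delta$ of Proposition \ref{pro:bgg}. Under the decomposition \eqref{eq:verma-to-sym}, the restriction $\bar\delta : L_{r\lambda-(r+1)\alpha} \to S^\bullet(S^2\R^n)\otimes L_{r\lambda}$ is a $U(\gp)$--homomorphism whose domain is the $\GL_n$--irreducible summand $S^{2(r+1)}\R^n\otimes L_{r\lambda}$ of $S^{r+1}(S^2\R^n)\otimes L_{r\lambda}$, appearing with multiplicity one; by Schur's lemma $\bar\delta$ is forced to land inside the single homogeneous component $S^{r+1}(S^2\R^n)\otimes L_{r\lambda}$ and to be the natural equivariant inclusion there. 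Plugging this $\bar\phi_{r+1}$ into the formula of Lemma \ref{lem:local-expr} yields exactly the claimed operator: pairing $\nabla^{(r+1)}f$ against sections of the dual summand is precisely the projection $\mathrm{pr}$ onto $S^{2(r+1)}\R^{n*}$. The main obstacle, and the point that demands careful bookkeeping, is matching the two chains of identifications, namely the algebraic one $L_{r\lambda-(r+1)\alpha}\simeq S^{2(r+1)}\R^n\otimes L_{r\lambda}$ on the Verma side and the geometric one coming from the canonical connection on $S^2\R^n$, so that the projection $\mathrm{pr}$ appearing in the theorem agrees with the dual of the $\GL_n$--equivariant inclusion singled out by Schur's lemma.
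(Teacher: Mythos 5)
Your proposal is correct and follows essentially the same route as the paper: the paper's (very terse) proof consists of exactly the chain you describe, with Proposition \ref{pro:bgg} fed through Proposition \ref{pro:verma-diff} and Lemma \ref{lem:eval-action}, the multiplicity-one/Schur argument pinning $\bar\delta$ down as the $\GL_n$--equivariant inclusion of $S^{2(r+1)}\R^n\otimes L_{r\lambda}$ into $S^{r+1}(S^2\R^n)\otimes L_{r\lambda}$ (carried out in the subsection preceding the theorem), and Lemma \ref{lem:local-expr} plus a choice of trivialisation $L_{r\lambda}\simeq\R$ supplying the explicit form $\mathrm{pr}\circ\nabla^{(r+1)}$. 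Your closing remark about matching the algebraic and geometric identifications is precisely the bookkeeping the paper delegates to the commutative diagram in the proof of Lemma \ref{lem:local-expr}.
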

\begin{proof}
Using Lemma \ref{lem:local-expr},
one only needs to choose an identification $L_{r\lambda} \simeq \R$ as one--dimensional
vector spaces.
\end{proof}
A little bit more can be said about the functions on $S^2\R^n$ in the kernel
of $\mathrm{pr} \circ \nabla^{(r+1)}$. A simple computation with weights
shows that the restriction
of $\varpi:U(\sp_{2n})\otimes_{U(\gp)}\otimes L_{r\lambda} \to V_{r\lambda}$ to
$S^{\le \ell} (S^2\R^n) \otimes L_{r\lambda}$ is surjective if and only if $\ell \ge nr$;
furthermore, $S^\ell (S^2\R^n) \otimes L_{r\lambda}$ is contained in $\ker\varpi$
for all $\ell > nr$. Dualising these statements one sees that the solutions
of $\mathrm{pr}\circ\nabla^{(r+1)}$ are polynomials of degree at most $nr$,
and that this bound is saturated. Observe that, for fixed $r>0$, the degree
of solutions grows linearly with the dimension $n$, while the order of the differential operator
is constant.
\subsection{Tensorial obstructions to  complete exceptionality}\label{subsecLinkSecJanConLaPrecedente}
In general, we do not know an explicit intrinsic characterisation of $r^\textrm{th}$ degree   hypersurface
sections as submanifolds, in the same spirit as Theorem
\ref{th.main.1}. However, for $r=1$ (i.e., hyperplane sections) and any $n\ge 2$ we
may at least construct a tensorial obstruction. Namely, let $\Sigma \subset X_n$
be a codimension $1$
submanifold, and write $F^\ell \subset\Gamma(\Sigma, \LLL_\lambda^*)$ for the
sub--module of sections of $\LLL_\lambda^*$ vanishing to $(\ell-1)$-st order along $\Sigma$.
Recall that $F^1/F^2$ is naturally identified
with the module $\Gamma(\Sigma, N^*\Sigma\otimes\LLL_\lambda^*)$
of global sections of the twisted conormal bundle.
We then have a commutative diagram
\begin{equation*}\xymatrix{
**[l]F^1/F^3 \ar[r] \ar[d]& **[r]\Gamma(\Sigma, S^2 T^*X_n|_\Sigma \otimes \LLL_\lambda^*|_\Sigma)\ar[d] \\
**[l]F^1/F^2 \ar[r]^{\Phi_\Sigma}&**[r]\Gamma(\Sigma, S^{2} T^*\Sigma \otimes \LLL_\lambda^*|_\Sigma)\, ,
}\end{equation*}
where the top horizontal arrow
is induced by our differential operator $\Gamma(\LLL_\lambda^*) \to \Gamma(S^2T^*X_n\otimes\LLL_\lambda^*)$,
while the right vertical arrow is the pullback map along $\Sigma \to S^2\R^n$.
One checks (e.g., using Theorem \ref{thm:final-bgg}) that the bottom horizontal arrow is $C^\infty$--linear, thus corresponding
to a tensor
$ \Phi_\Sigma \in \Gamma(\Sigma, S^2 T^*\Sigma \otimes N\Sigma) $,
whose vanishing is a \emph{necessary} condition for $\Sigma$ to be a hyperplane section.
For $n=2$, one may identify $\Phi_\Sigma$ with
the trace--free second fundamental form of $\Sigma$ with respect
to the invariant conformal structure on $X_2$. Thus, in the notation of Theorem
\ref{th.main.1}, vanishing of $\Phi_\Sigma$ is equivalent to the
condition $\II_\Sigma^{T_2} \in S^2_{T_2}|_\Sigma$ over the dense open $S^2\R^2 \subset X_2$,
where $T_2$ is well--defined as an actual metric tensor. For $n=3$, again
over $S^2\R^3 \subset X_3$, one may identify $\Phi_\Sigma$ with
$\II_\Sigma^{X \lrcorner T_3}$ modulo $S^2_{T_3}|_\Sigma$ for any
\emph{translation} vector field $X \in S^2\R^2$ of maximal rank. Indeed,
the tensor $T_3$ may be chosen constant with respect to the
affine connection $\nabla$ on $S^2\R^2$, whence
$\nabla$ is the Levi--Civita connection for $X \lrcorner T_3$. Then, by Theorem
\ref{th.main.1}, vanishing of $\Phi_\Sigma$ is again equivalent
to the condition $\II_\Sigma^{X \lrcorner} T_3 \in S^2_{T_3}|_\Sigma$
for any $X \in \mathrm{sym}_{\mathrm{max}}(T_3)$. Hence,
for $n=2,3$ we conclude that vanishing of $\Phi_\Sigma$ is also a \emph{sufficient}
condition for $\Sigma$ to be a hyperplane section.

\subsection{The Lagrangian Grassmannian as a rational projective variety}\label{secLinkWithProjGeom}

Observe that $X_n$ has the same dimension as $\p W_0$, where
$ 
W_0:=\R \oplus S^2\R^{n\ast}
$ 
 is a $\gl_n$--submodule of $V_\lambda$. Let us denote by $W_1$ the sum of the remaining irreducible pieces of    $V_\lambda$.
 \begin{theorem}
 The natural projection $\p V_\lambda\longrightarrow \p W_0$ restricts to a birational map
 \begin{equation}\label{eqProjRazionale}
 X_n\longrightarrow \p W_0\, .
 \end{equation}
 \end{theorem}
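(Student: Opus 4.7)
The plan is to identify a dense open subset of $X_n$ on which \eqref{eqProjRazionale} is an affine isomorphism onto a standard affine chart of $\p W_0$, which yields birationality at once. A preliminary dimension count gives $\dim X_n = \tfrac{n(n+1)}{2} = \dim \p W_0$, so the statement is dimensionally consistent. Moreover, the base point $[\R^{n*}] \in X_n$ corresponds to the highest weight vector $v_\lambda = e^1\wedge\cdots\wedge e^n \in \det \R^{n*} \subset W_0$, so the projection with center $\p W_1$ is already defined at the base point; by irreducibility of $X_n$, \eqref{eqProjRazionale} is then defined on a dense open subset.

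To prove that \eqref{eqProjRazionale} is birational I would work in the affine chart $S^2\R^n \hookrightarrow X_n$ introduced at the end of Section \ref{eqSubSecJanUltima}, which parametrizes Lagrangian subspaces transverse to $\R^n$: a symmetric tensor $\tilde P \in S^2\R^n$ corresponds to the Lagrangian $L_{\tilde P}$ spanned by $\tilde P e^i + e^i$, $i = 1,\dots,n$. Its Pl\"ucker image
\begin{equation*}
(\tilde P e^1 + e^1) \wedge \cdots \wedge (\tilde P e^n + e^n) \in \Lambda^n(\R^n \oplus \R^{n*})
\end{equation*}
splits along the $\GL_n$-graded decomposition $\Lambda^n(\R^n \oplus \R^{n*}) = \bigoplus_{k=0}^n \Lambda^k \R^n \otimes \Lambda^{n-k}\R^{n*}$, the $k$-th summand being homogeneous of degree $k$ in $\tilde P$. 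In particular the $k=0$ summand is the constant $e^1\wedge\cdots\wedge e^n$, the $k=1$ summand is linear in $\tilde P$, and all pieces with $k \geq 2$ are of degree at least two.

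After passing to the quotient $V_\lambda = \Lambda^n(\R^n \oplus \R^{n*})/(\omega \wedge \Lambda^{n-2})$, the $k = 0,1$ pieces together span the summand $W_0$, while the $k \geq 2$ pieces project into $W_1$ and are therefore killed by \eqref{eqProjRazionale}. In the standard affine chart of $\p W_0$ in which the $\det\R^{n*}$-coordinate is normalized to $1$, the restriction of \eqref{eqProjRazionale} to $S^2\R^n$ thus becomes a $\GL_n$-equivariant linear map $\tilde P \mapsto \ell(\tilde P)$, whose image is the full affine chart $S^2\R^{n*}\subset \p W_0$. Since $\ell$ is a $\GL_n$-equivariant linear isomorphism between spaces of the same dimension, this identifies dense open subsets of $X_n$ and $\p W_0$, proving birationality. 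The main technical point that still requires care is the precise $\GL_n$-equivariant identification of the $k=1$ graded piece of $V_\lambda$ with $S^2\R^{n*}$: the natural candidate arising from the decomposition above is $S^2\R^n \otimes \det\R^{n*}$, so a $\det$-character twist has to be absorbed. This affects only the explicit form of $\ell$, not its bijectivity, and is therefore immaterial for birationality.
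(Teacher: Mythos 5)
Your proof is correct, and it reaches the same underlying computation as the paper from the opposite direction. The paper's own proof exhibits an explicit rational inverse $\p W_0\dashrightarrow \p V_\lambda$, namely $[\lambda:\alpha]\longmapsto[\lambda^n:\lambda^{n-1}\alpha:\lambda^{n-2}\alpha^{(2)}:\cdots:\alpha^{(n)}]$ as in \eqref{eqPiWuZero}, where $\alpha^{(k)}\in S^2_0\Lambda^k\R^{n\ast}$ is the natural extension of the bilinear form $\alpha$ to $\Lambda^k\R^n$, and then asserts that the image of this map lies in $X_n$. The components $\lambda^{n-k}\alpha^{(k)}$ are exactly the graded pieces of your Pl\"ucker expansion of $(\tilde P e^1+e^1)\wedge\cdots\wedge(\tilde P e^n+e^n)$, so the geometric content is identical; what you do differently is to argue on the forward map, showing that the projection restricted to the big cell $S^2\R^n\subset X_n$ is an affine isomorphism onto the standard chart of $\p W_0$. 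Your route buys a self--contained argument (you never have to verify separately that a candidate inverse lands in $X_n$ and composes to the identity) and the slightly stronger conclusion that the birational map is biregular on the big cell; the paper's route buys the explicit inverse formula, which it immediately reuses to factor the map through the Veronese embedding as $p=h\circ v_n$. One small point you should close: the assertion that $\ell$ is an isomorphism deserves a line of justification. For instance, the degree--one graded piece of $V_\lambda$ is the irreducible $\GL_n$--module $S^2\R^n\otimes\det\R^{n\ast}$ (the quotient by $\omega\wedge\Lambda^{n-2}\R^{n\ast}$ kills precisely the $\Lambda^2\R^n\otimes\det\R^{n\ast}$ summand of $\R^n\otimes\Lambda^{n-1}\R^{n\ast}$), and $\ell$ is visibly nonzero, so Schur's lemma gives injectivity and the dimension count finishes it; with that in place, the determinant twist you flag is indeed immaterial.
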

\begin{proof}
It will be carried out by constructing an explicit inverse for \eqref{eqProjRazionale}. To this end, we must observe that any bilinear form $\alpha\in S^2\R^{n\ast} $ can be extended naturally to a bilinear form on $\Lambda^k\R^n$, and that such an extension, say $\alpha^{(k)}$ belongs in fact to $S^2_0\Lambda^k\R^{n\ast} $.\par
The desired inverse is then given by the rational map
\begin{eqnarray}
 \p W_0&\stackrel{p}{\longrightarrow} &\p(W_0\oplus W_1)=\p V_\lambda\, ,\label{eqPiWuZero}\\
\left[\lambda:\alpha\right]&\longmapsto & \left[\lambda^n:\lambda^{n-1}\alpha:\lambda^{n-2}\alpha^{(2)}:\cdots:\alpha^{(n)}\right]\, ,\nonumber
\end{eqnarray}
whose image lies in $X_n$.
\end{proof}
 \begin{remark}
 By restricting \eqref{eqPiWuZero} to the affine neighborhood $S^2\R^{n\ast}\equiv\{\lambda=1\}\subset \p W_0$, one gets precisely the special affine neighborhood $U\subset X_n$ defined earlier.
 \end{remark}
\begin{corollary}
The Lagrangian Grassmannian is rational.
\end{corollary}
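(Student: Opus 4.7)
The statement is an essentially immediate consequence of the theorem proved just above, so the plan is simply to unwind the definitions and cite that theorem. Recall that an algebraic variety $Y$ is called \emph{rational} if there exists a birational map between $Y$ and some projective space $\p^N$.

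The theorem preceding the corollary constructs an explicit birational map
\[
X_n \longrightarrow \p W_0,
\]
where $W_0 = \R \oplus S^2\R^{n*}$ is a finite--dimensional real vector space. Since $\p W_0$ is, by construction, a projective space of dimension
\[
\dim \p W_0 \;=\; \dim W_0 - 1 \;=\; \tfrac{n(n+1)}{2},
\]
which coincides (as expected) with $\dim X_n$, we have produced a birational map from $X_n$ to a projective space. By the definition of rationality, this is precisely what is required.

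Hence the only thing to do is to observe that composing the birational map \eqref{eqProjRazionale} with any linear isomorphism $\p W_0 \simeq \p^{n(n+1)/2}$ yields a birational map $X_n \dashrightarrow \p^{n(n+1)/2}$. There is no real obstacle here: the whole content sits in the previous theorem, whose proof exhibited the explicit rational inverse $p$ in \eqref{eqPiWuZero}. The corollary merely records the consequence in the standard language of birational geometry.
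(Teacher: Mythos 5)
Your argument is correct and coincides with the paper's: the corollary is stated as an immediate consequence of the preceding theorem, which exhibits the birational map $X_n \to \p W_0$, and since $\p W_0$ is a projective space (of the right dimension $\tfrac{n(n+1)}{2}$), rationality follows by definition. The paper offers no further proof, so your unwinding of the definitions is exactly the intended reasoning.
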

 The   map $p:\p W_0\longrightarrow \p V_\lambda$ defined as in \eqref{eqPiWuZero} is of degree $n$, and as such it factors through the Veronese map, i.e., $p=h\circ v_n$, where $v_n$ is the Veronese map, and $h$ is linear, viz.
$ 
\p W_0  \stackrel{v_n}{\longrightarrow} \p S^n W_0  \stackrel{h}{\longrightarrow} \p V_\lambda
$. 
Dually,
$ 
\O_{\p V_\lambda}(r) \stackrel{h^*}{\longrightarrow} \O_{\p W_0}(nr)
$ 
and, by taking sections over $U$,
\begin{equation}\label{eqUnPoFascista}
\Gamma(\O_{\p V_\lambda}(r),U) \stackrel{h^*}{\longrightarrow} S^{nr}S^2\R^n\, .
\end{equation}
In other words, the Lagrangian Grassmannian $X_n$ is covered by the Veronese variety $v_n(\p^n)$ through a linear projection $h$.
 \begin{theorem}\label{thUltimoUnPoAlgebrista}
 The image of the morphism $h^*$ in \eqref{eqUnPoFascista} is made precisely by the polynomials of degree $n$ on $U$, which corresponds to degree--$r$ hypersurface sections. Moreover,  \eqref{eqUnPoFascista} becomes an exact sequence if the total symmetrisation $S^{nr}S^2\R^n\longrightarrow S^{2nr} \R^n$ is appendend.
 \end{theorem}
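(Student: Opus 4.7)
The plan is to prove the theorem in two stages, combining a geometric argument with a representation-theoretic comparison, both of which use Theorem \ref{thm:final-bgg} as the central input.

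First, I would unpack the map $h^\ast$ explicitly. The factorisation $p = h\circ v_n$ gives $h^\ast : S^r V_\lambda^\ast \to S^{nr} W_0^\ast$ as the composition of the adjoint of the $\GL_n$-equivariant linear map $h\colon S^n W_0 \to V_\lambda$ and the Veronese adjoint. Since $p$ restricts to a biregular isomorphism between the affine chart $U = \{\lambda=1\}\subset \p W_0$ and a dense open of $X_n$, the pulled-back polynomial $h^\ast F$ for $F \in S^r V_\lambda^\ast$ cuts out, on $U$, the intersection $\{F=0\}\cap X_n\cap p(U)$. This establishes the first assertion: the image of $h^\ast$ consists precisely of polynomials on $U$ that cut out degree-$r$ hypersurface sections of $X_n$.

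Second, for the exactness, Theorem \ref{thm:final-bgg} identifies the image of $h^\ast$ with the kernel of the BGG operator $\mathrm{pr}\circ\nabla^{(r+1)}$ acting on polynomial functions on $U \cong S^2\R^n$; by the saturation remark following Theorem \ref{thm:final-bgg}, this kernel is contained in polynomials of degree at most $nr$, with the top-degree part lying in $S^{nr}S^2\R^n$. Both $\mathrm{pr}\circ\nabla^{(r+1)}|_{S^{nr}S^2\R^n}$ and $\mathcal S$ are $\GL_n$-equivariant maps out of $S^{nr}S^2\R^n$, so by Schur's lemma each kernel is a sum of $\GL_n$-isotypic components of this module. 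My plan is to show that both kernels equal the sum of all isotypic components other than the highest-weight summand $S^{2nr}\R^n \subset S^{nr}S^2\R^n$, whence they coincide.

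The main obstacle is the comparison between the two operators, since their targets live in a priori different $\GL_n$-modules ($C^\infty(S^2\R^n, S^{2(r+1)}\R^{n\ast})$ versus $S^{2nr}\R^n$). The essential point is that by $\GL_n$-equivariance and Schur, each operator can fail to vanish only on those isotypic components of $S^{nr}S^2\R^n$ that also appear in its target, and tracking the highest weights shows that the unique component common to both non-kernels is the highest-weight summand $S^{2nr}\R^n$. Non-triviality of both restrictions to this summand is ensured by the saturation of the degree bound after Theorem \ref{thm:final-bgg} (which forbids the BGG operator from collapsing on all of $S^{nr}S^2\R^n$), together with the explicit translation-invariant form furnished by Lemma \ref{lem:local-expr}, which exhibits the leading behaviour of the BGG operator as the total symmetrisation itself. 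This matches the two kernels and yields the claimed exactness.
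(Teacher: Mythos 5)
Your first paragraph is fine, and it is essentially all the paper itself offers: the paper's entire proof of this theorem is the single line ``Follows from Theorem \ref{thm:final-bgg}''. The real content of your proposal is therefore the kernel comparison in the second part, and that is where the argument breaks down. The claim that ``the unique component common to both non-kernels is the highest-weight summand $S^{2nr}\R^n$'' fails as soon as $n\ge 3$. Take $n=3$, $r=1$. Then $S^{nr}(S^2\R^3)=S^{3}(S^2\R^3)$ decomposes (multiplicity-freely, over even partitions) as $S^{(6)}\R^3\oplus S^{(4,2)}\R^3\oplus S^{(2,2,2)}\R^3$ of dimensions $28+27+1$, so $\ker\mathcal S=S^{(4,2)}\oplus S^{(2,2,2)}$ is $28$--dimensional. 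On the other hand the full kernel of the BGG operator $\mathrm{pr}\circ\nabla^{(2)}$ is the $14$--dimensional module $V_{r\lambda}^*$ of Monge--Amp\`ere polynomials, whose degree--$3$ homogeneous part is only the line spanned by $\det$, i.e.\ $S^{(2,2,2)}\R^3$. The component $S^{(4,2)}\R^3$ does occur in the target of the restricted BGG operator, since $S^2\R^{3*}\otimes S^4\R^{3*}\simeq S^{(6)}\oplus S^{(5,1)}\oplus S^{(4,2)}$, and exactness of the BGG sequence forces the operator to be nonzero there. Hence the two kernels you want to identify have dimensions $1$ and $28$ respectively: the highest-weight bookkeeping does not single out only the Cartan component, because a non-Cartan summand can lie in the non-kernel of the differential operator while lying in the kernel of $\mathcal S$.

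The discrepancy is structural rather than a fixable bookkeeping slip. The operator furnished by Theorem \ref{thm:final-bgg} has order $r+1$ and its algebraic projection part is the symmetrisation $S^{r+1}(S^2\R^{n*})\to S^{2(r+1)}\R^{n*}$; this coincides with the bare linear map $S^{nr}S^2\R^n\to S^{2nr}\R^n$ appended in the statement only when $nr=r+1$, i.e.\ for $n=2$, $r=1$. For $n\ge 3$ the dimension count above ($14$ versus at least $28$) shows that the image of $h^*$ cannot equal the kernel of the bare total symmetrisation, so the exactness must be read with the differential operator $\mathrm{pr}\circ\nabla^{(r+1)}$ in place of the linear map $\mathcal S$ --- at which point the assertion is a direct restatement of Theorem \ref{thm:final-bgg} together with the degree bound $nr$, and no separate kernel-matching argument is needed or available. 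As written, your argument establishes only the case $n=2$, $r=1$.
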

 \begin{proof}
 Follows from Theorem \ref{thm:final-bgg}.
 \end{proof}

 \subsection{Canonical equations on $X_n$}\label{secLinkWithJets}

It should be clear that the main results obtained so far, namely Theorem \ref{th.main.1},   Theorem \ref{thm:final-bgg} and Theorem \ref{thUltimoUnPoAlgebrista}, have something  in common: they all allow to recognise special hypersurfaces in $X_n$ by manipulating the natural structures associated with $X_n$.  The natural place for studying the differential invariants of the   hypersurfaces in $X_n$ is the jet space
$
 J^r:=J^r(X_n , d_n-1 )
$, 
where $r$ is the order of the invariant (i.e., hyperplane sections will be characterised by the vanishing of a first order invariant), and $d_n :=\dim X_n=\frac{n(n+1)}{2}$.
\begin{theorem}
$J^r$ is equipped with a canonical vertical distribution $\E$, which is tangent to the equation of degree--$r$ hyperplane sections.
\end{theorem}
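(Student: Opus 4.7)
The plan is to combine the BGG-operator characterisation of degree-$r$ hyperplane sections (Theorem \ref{thm:final-bgg}) with a natural splitting of the vertical tangent bundle of the projection $J^r \to J^{r-1}$. I would first identify the equation $\mathcal{E}_r \subset J^r$ corresponding to degree-$r$ hyperplane sections by working in the affine chart $S^2\R^n \subset X_n$ and representing a jet of hypersurface by a jet of local defining function. A hypersurface $\Sigma = \{f=0\}$ is a degree-$r$ hyperplane section iff $(\mathrm{pr}\circ\nabla^{(r+1)})f = 0$; by the Leibniz rule, replacing $f$ by $gf$ (for $g$ nonvanishing) modifies this quantity by terms proportional to $f$ and its lower-order jets, so the condition descends to a well-defined sub-variety of $J^r$ rather than a sub-variety of $J^{r+1}$ of defining functions.

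Next, I would construct the canonical vertical distribution $\E$ as the kernel of the principal symbol of the linearised BGG operator. Intrinsically, the vertical tangent space of $J^r \to J^{r-1}$ at a jet $[j^r_x \Sigma]$ is canonically isomorphic to $S^r T^*_x X_n \otimes N_x\Sigma$; the operator $\mathrm{pr}\circ\nabla^{(r+1)}$ induces on this fiber a natural linear map to $\LLL^*_{r\lambda - (r+1)\alpha}$ over the underlying point of $X_n$, and $\E$ is defined as its kernel. Because the BGG operator is $\Sp_{2n}$-equivariant, this description depends only on the associated-bundle data of Section \ref{eqSubSecJanUltima} and not on the choice of trivialising chart; in particular $\E$ is well-defined globally on $J^r$.

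Finally, the tangency $\E|_{\mathcal{E}_r} \subset T\mathcal{E}_r$ follows from exactness of the BGG resolution (Proposition \ref{pro:bgg}). At a jet lying in $\mathcal{E}_r$, an infinitesimal vertical deformation along $\E$ modifies the $r$-jet by an element lying in the kernel of the principal symbol of the BGG operator; by exactness this perturbation is the image of some element under the preceding map in the resolution, hence the BGG equation continues to hold to first order, so the perturbed jet remains in $\mathcal{E}_r$. The main obstacle is making both $\mathcal{E}_r$ and $\E$ globally well-defined on the space of submanifold jets: each is constructed locally via a choice of defining function and affine chart, and one must verify that the ambiguities (multiplication of $f$ by $g$, change of chart) cancel out. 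This is handled by reformulating the whole construction in the intrinsic associated-bundle language of Section \ref{eqSubSecJanUltima}, where the $\Sp_{2n}$-equivariance of the BGG resolution ensures canonicity.
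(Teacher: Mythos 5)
Your overall strategy --- extract the distribution fibrewise from the algebraic structure of the BGG symbol and invoke Theorem \ref{thm:final-bgg} for the tangency --- is the same as the paper's, but several concrete steps do not go through as stated. First, there is an order mismatch: the operator $\mathrm{pr}\circ\nabla^{(r+1)}$ has order $r+1$, so its symbol lives on $S^{r+1}T^*X_n$ and the corresponding condition on jets of hypersurfaces sits over the fibres of $J^{r+1}\to J^{r}$ (the paper's proof of the case $r=1$ works explicitly with $J^2\to J^1$); your order-$(r+1)$ symbol acting on the fibre of $J^r\to J^{r-1}$ does not typecheck. Second, the fibre of $J^{k}\to J^{k-1}$ at a jet of $\Sigma$ is modelled on $S^{k}T^*_x\Sigma\otimes N_x\Sigma$, not on $S^{k}T^*_xX_n\otimes N_x\Sigma$, and this distinction is the crux of the paper's argument: everything rests on the restriction map $S^2(S^2L^*)\to S^2(\ker q)^*$ of \eqref{eqSimmetrizzazionmeFinale}, i.e., on how the distinguished summand of $S^{r+1}(S^2\R^{n*})$ interacts with the passage to the tangent hyperplane determined by the $1$--jet. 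Relatedly, your ``induced map on the fiber'' is not well defined: $\mathrm{pr}$ does not factor through this restriction (quadratic forms on $T_LX_n$ vanishing on $\ker q$ are of the form $q\odot\beta$, and the total symmetrisation sends $q\odot\beta$ to the product $q\cdot\beta\neq 0$ in general), so one cannot define $\E$ as the kernel of a map on the fibre; the distinguished subspace must instead be obtained as the \emph{image}, under the restriction map, of the relevant summand of $S^{r+1}(S^2 L^*)$, which is exactly how the paper produces $S^2_0(\ker q)^*$.

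Two further points. Your Leibniz-rule descent argument is incomplete: replacing $f$ by $gf$ changes $\mathrm{pr}\circ\nabla^{(r+1)}f$ by terms involving $\nabla^{(j)}f$ with $1\le j\le r$, and these do \emph{not} vanish along $\Sigma$; the paper settles this issue only for $r=1$ (the diagram defining $\Phi_\Sigma$ in Section \ref{subsecLinkSecJanConLaPrecedente}) and explicitly states that no intrinsic characterisation is known for general $r$. For the present theorem no descent is needed: one takes the equation to be the set of jets of actual degree-$r$ hypersurface sections. Finally, the appeal to exactness of the BGG resolution for the tangency is a non sequitur: exactness identifies the kernel of the full operator with $V_{r\lambda}^*$ and says nothing about first-order deformations of a jet lying in the kernel of the symbol. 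What is actually used is that, by Theorem \ref{thm:final-bgg}, the intersection of the equation with each fibre of $J^{r+1}\to J^{r}$ is an affine translate of the distinguished linear subspace, so the constant vertical distribution that this subspace defines is tangent to the equation by construction.
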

\begin{proof}
We just deal with the case $r=1$, since the general one is formally analogous. Recall the   canonical identifications
$
J^1=\p T^* X_n=\p(S^2 \L)
$, 
and that the contact plane on $J^1$ is given by
\begin{equation*}
\CC_{[q]}:=\ker q \oplus T_{[q]}(\p S^2 L)\subset S^2 L^*\oplus  T_{[q]}(\p S^2 L) =T_{[q]}J^1\, ,
\end{equation*}
where $L\in X_n$ is the projection of $[q]\in J^1$. But
\begin{equation*}
T_{[q]}(\p S^2 L)=(\ker q)^*\otimes\frac{  S^2 L}{\ker q}\equiv (\ker q)^*\, ,
\end{equation*}
so that
$
\CC_{[q]}:= \ker q \oplus (\ker q)^*
$ 
is the canonical symplectic $2(n-1)$--dimensional space $\R^{n-1}\oplus\R^{n-1\ast}$.
\par
According, the fibre $J^2_{[q]}$ of the projection $J^2\to J^1$ is given by\footnote{We denote by $X^\circ_n$ the ``big cell'' in $X_n$, i.e., the standard affine neighborhood.}
\begin{equation*}
J^2_{[q]}:=\LL(n-1,\CC_{[q]})^\circ=\LL(n-1,2(n-1))^\circ\equiv X_{n-1}^\circ\, ,
\end{equation*}
which is an affine space modelled over
$
S^2(\ker q)^*\equiv S^2\R^{n-1}
$. 
   But since there is a natural restriction map
   \begin{equation}\label{eqSimmetrizzazionmeFinale}
   S^2(S^2L^*)\longrightarrow S^2 (\ker q)^*
   \end{equation}
  and  $S^2(S^2L^*)$ contains the special subspace $S^4L^*$, then there is also a distinguished subspace
   $S^2_0 (\ker q)^*$ in $S^2 (\ker q)^*$. \par
 Since the linear models of the fibres of $J^2\to J^1$ contain a canonical linear subspace, the bundle $J^2$ is equipped with a canonical vertical distribution. The result follow from Theorem \ref{thm:final-bgg}, by comparing \eqref{eqSimmetrizzazionmeFinale} with $\mathrm{pr}\circ\nabla^{(2)}$.

\end{proof}
The so--obtained vertical distribution can be regarded as an ``infinitesimal" description of the (generalised) condition of complete exceptionality.

\subsection*{Conclusions}
All reasoning carried out here stemmed from the peculiarity of the  tangent geometry of the Lagrangian Grassmannian. In one way or another, we always relied on the fundamental isomorphism \eqref{eq.iso.solito}. In small dimensions and low degree, it was easy to realise that among the natural tensorial identities following from \eqref{eq.iso.solito}, there are the conditions of complete exceptionality which, in turn, can be recast in the familiar terms of conformal (possibly ``trivalent'') geometry. Even the result  obtained in   Theorem \ref{thm:final-bgg} could be guessed---so to speak---by analogy, but proving its validity necessarily required such a heavy machinery as the BGG resolution.

\subsection*{Acknowledgements}

Important hints and suggestions came from David Calderbank, Willie Wong, V\'it Tu\v{c}ek, Ciro Ciliberto, and Pawe\l\ Nurowski.\footnote{Many thanks go to the MathOverflow forum    for providing an effective platform of knowledge--sharing.} The work of Jan Gutt was supported in part by the Polish National Science Center (NCN) via DEC--2013/09/B/ST1/01799. The research of Gianni Manno has been partially supported by the project   ``FIR (Futuro in Ricerca) 2013 -- Geometria delle equazioni differenziali''. The research of Giovanni Moreno has been partially supported by the Marie Sk\l odowska--Curie fellowship SEP--210182301 ``GEOGRAL". Both Gianni Manno and Giovanni Moreno are members of G.N.S.A.G.A. of I.N.d.A.M.




\end{document}